\newtheorem{theorem}{Theorem}
\newtheorem{lemma}[theorem]{Lemma}
\newtheorem{corollary}[theorem]{Corollary}
\newtheorem{proposition}[theorem]{Proposition}
\theoremstyle{remark}
\newtheorem{remark}{Remark}
\theoremstyle{example} 
\newtheorem{definition}[theorem]{Definition}
\theoremstyle{remark}
\newcommand{\fE}{{\frak E}}
\newcommand{\cd}{\cdot}
\newcommand{\tx}{\tilde{x}}
\renewcommand{\H}{{\mathcal{H}}}
\newcommand{\E}{{\mathbb{E}}}
\newcommand{\tf}{{\tilde{f}}}
\newcommand{\N}{\mathbb{N}}
\newcommand{\R}{\mathbb{R}}
\newcommand{\ve}{\varepsilon}
\newcommand{\bmw}{{\bm w}}
\newcommand{\bmz}{{\bm z}}
\newcommand{\bmalpha}{{\bm \alpha}}
\newcommand{\bmf}{{\bm f}}
\newcommand{\blue}{\textcolor{blue}}
\newcommand{\citep}{\cite}
\newcommand{\citealp}{\cite}
\newtheorem{assumption}{Assumption}
\title{Convergence Analysis of Deterministic Kernel-Based Quadrature Rules in Misspecified Settings}
\author[1]{Motonobu Kanagawa\thanks{kanagawa@ism.ac.jp/motonobu.kanagawa@gmail.com}}
\author[2]{Bharath K. Sriperumbudur\thanks{bks18@psu.du}}
\author[3]{Kenji Fukumizu\thanks{fukumizu@ism.ac.jp}}
\affil[1]{University of T\"ubingen and Max Planck Institute for Intelligent Systems\\ Max-Planck-Ring 4, 72076 T\"ubingen, Germany}\vspace{2mm}
\affil[2]{Department of Statistics\\ Pennsylvania State University\\
University Park, PA 16802\\ USA}\vspace{2mm}
\affil[3]{The Institute of Statistical Mathematics, 10-3 Midori-cho, Tachikawa, Tokyo 190-8562, Japan}
\begin{document}

\maketitle

\begin{abstract}
This paper presents convergence analysis of kernel-based quadrature rules in misspecified settings, focusing on deterministic quadrature in Sobolev spaces.
In particular, we deal with misspecified settings where a test integrand is less smooth than a Sobolev RKHS based on which a quadrature rule is constructed.
We provide convergence guarantees based on two different assumptions on a quadrature rule: one on quadrature weights, and the other on design points.
More precisely, we show that convergence rates can be derived (i) if the sum of absolute weights remains constant (or does not increase quickly), or (ii) if the minimum distance between design points does not decrease very quickly. 
As a consequence of the latter result, we derive a rate of convergence for Bayesian quadrature in misspecified settings.
We reveal a condition on design points to make Bayesian quadrature robust to misspecification, and show that, under this condition, it may adaptively achieve the optimal rate of convergence in the Sobolev space of a lesser order (i.e., of the unknown smoothness of a test integrand), under a slightly stronger regularity condition on the integrand.
\end{abstract}
\textbf{MSC 2010 subject classification:} Primary: 65D30, Secondary: 65D32, 65D05, 46E35, 46E22.\\
\textbf{Keywords and phrases:} kernel-based quadrature rules, misspecified settings, Sobolev spaces, reproducing kernel Hilbert spaces, Bayesian quadrature
\setlength{\parskip}{4pt}

\section{Introduction}
\label{sec:intro}

This paper discusses the problem of numerical integration (or quadrature), which has been a fundamental task in numerical analysis, statistics, computer science including machine learning and other areas. Let $P$ be a (known) Borel probability measure on the Euclidian space $\R^d$ with support contained in an open set $\Omega \subset \R^d$, and $f$ be an integrand on $\Omega$. Suppose that the integral $\int f(x)dP(x)$ has no closed form solution.
We consider quadrature rules that provide an approximation of the integral, in the form of a weighted sum of function values
\begin{equation} \label{eq:intro_numint}
\sum_{i=1}^n w_i f(X_i) \approx \int f(x)dP(x),
\end{equation}
where $X_1,\dots,X_n \in \Omega$ are design points and  $w_1,\dots,w_n \in \R$ are quadrature weights.
Throughout this paper, the integral of $f$ and its quadrature estimate are denoted by $Pf$ and $P_nf$, respectively; namely,
\begin{equation} \label{eq:dist_notation}
Pf := \int f(x)dP(x), \quad P_n f := \sum_{i=1}^n w_i f(X_i). 
\end{equation} 

Examples of such quadrature rules include Monte Carlo methods, which make use of a random sample from a suitable proposal distribution as $X_1,\dots,X_n$, and importance weights as $w_1,\dots,w_n$.  
A limitation of standard Monte Carlo methods is that a huge number of design points (i.e.,~large $n$) may be needed for providing an accurate approximation of the integral; this comes from the fact that the rate of convergence of Monte Carlo methods is typically of the order $\mathbb{E}[|Pf- P_nf|] = O(n^{-1/2})$ as $n \to \infty$, where $\mathbb{E}[\cdot]$ denotes the expectation with respect to the random sample.
The need for large $n$ is problematic, when an evaluation of the function value $f(x)$ is expensive for each input $x$. 
Such situations appear in modern scientific and engineering problems where the mapping $x \mapsto f(x)$ involves complicated computer simulation.
In applications to time-series forecasting, for instance, $x$ may be a parameter of an underlying system, $f(x)$ a certain quantity of interest in future, and $P$ a prior distribution on $x$.
Then the target integral $\int f(x)dP(x)$ is the predictive value of the future quantity. 
The evaluation of $f(x)$ for each $x$ may require numerically solving an initial value problem for the differential equation, which results in time-consuming computation \citep{pmlr-v70-briol17a}.
Similar examples can be seen in applications to statistics and machine learning, as mentioned below.
In these situations, one can only use a limited number of design points, and thus it is desirable to have quadrature rules with a faster convergence rate, in order to obtain a reliable solution \citep{OatGirCho17}.

\subsection{Kernel-based quadrature rules}

How can we obtain a quadrature rule whose convergence rate is faster than $O(n^{-1/2})$?
In practice, one often has prior knowledge or belief on the integrand $f$, such as smoothness, periodicity, sparsity, and so on.  Exploiting such knowledge or assumption in constructing a quadrature rule $\{ (w_i,X_i) \}_{i=1}^n$ may achieve faster rates of convergence, and such methods have been extensively studied in the literature for decades; see e.g.~\cite{DicKuoSlo13} and \cite{BriOatGirOsbSej15} for review.

This paper deals with quadrature rules using reproducing kernel Hilbert spaces (RKHS) explicitly or implicitly to achieve fast convergence rates; we will refer to such methods as {\em kernel-based quadrature rules} or simply {\em kernel quadrature}. 
As discused in Section \ref{sec:rev_meth}, notable examples include Quasi Monte Carlo methods \citep{Hic98,NovWoz10,DicKuoSlo13,DicNuyPil14}, Bayesian quadrature \citep{Oha91,BriOatGirOsbSej15}, and Kernel herding \citep{CheWelSmo10,BacJulObo12,Chen_ICML2018}. 
These methods have been studied extensively in recent years \citep{SomVia06,BriOatGirOsb15,OatGir16,OatGirCho17,Bac17,Xi_ICML2018,Karvonen_NIPS2018} and 
have recently found applications in, for instance, machine learning and statistics \citep{AvrSinYanMah16,LacLinBac15,GerCho15,BriOatGirOsbSej15,KerHen16,Oates_NIPS2017,paul_alternating_2018}. 

In kernel quadrature,  we make use of available knowledge on properties of the integrand $f$ by assuming that $f$ belongs to a certain RKHS $\H_k$ that possesses those properties (where $k$ is the reproducing kernel), 
and then constructing weighted points $\{ (w_i, X_i) \}_{i=1}^n$ such that the {\em worst case error} in the RKHS
\begin{equation} \label{eq:wce}
e_n(P;\H_k) := \sup_{f \in \H_k: \| f \|_{\H_k} \leq 1} \left| Pf - P_n f\right|
\end{equation}
is made small, where $\| \cd \|_{\H_k}$ is the norm of $\H_k$. 
The use of RKHS is beneficial when compared to other function spaces, as it leads to a closed form expression of the worst case error (\ref{eq:wce}) in terms of the kernel, and thus one may explicitly use this expression for designing $\{ (w_i, X_i) \}_{i=1}^n$  (see Section \ref{sec:closed_form_wce}). 

Note that, in a {\it well-specified case}, that is, the integrand $f$ satisfies $f\in\H_k$, the quadrature error is bounded as 
\[
\left| P_n f - P f \right| \leq \| f \|_{\H_k} e_n(P;\H_k).
\]
This guarantees that, if a quadrature rule satisfies $e_n(P; \H_k) = O(n^{-b})$ as $n \to \infty$ for some $b > 0$, then the quadrature error also satisfies $\left| P_n f - P f \right| = O(n^{-b})$.
Take a Sobolev space $H^r(\Omega)$ of order $r>d/2$ on $\Omega$ as the RKHS $\H_k$, for example.  It is known that optimal quadrature rules achieve $e_n(P;\H_k) =  O(n^{-r/d})$ \citep{Nov88}, and thus $\left| P_n f - P f \right| = O(n^{-r/d})$ holds for any $f\in \H_k$.
As we have $r/d > 1/2$, this rate is faster than Monte Carlo integration; this is the desideratum that has been discussed.

\subsection{Misspecified settings}
This paper focuses on situations where the assumption $f \in \H_k$ is violated, that is, {\em misspecified settings}.
As explained above, convergence guarantees for kernel quadrature rules often assume that $f\in\H_k$.
However, in practice one may lack the full knowledge on the properties on the integrand, and therefore misspecification of the RKHS (via the choice of its reproducing kernel $k$) may occur, that is, $f \notin \H_k$.

Such misspecification is likely to happen when the integrand is a {\em black box function}.
An illustrative example can be found in applications to computer graphics such as 
the problem of illumination integration (see e.g.~\citealp{BriOatGirOsbSej15}), where the task is to compute the total amount of light arriving at a camera in a virtual environment.
This problem is solved by quadrature, with integrand $f(x)$ being the intensity of light arriving at the camera from a direction $x$ (angle).
However, the value of $f(x)$ is only given by simulation of the environment for each $x$, so the integrand $f$ is a black box function.
Similar situations can be found in application to statistics and machine learning.
A representative example is the computation of marginal likelihood for a probabilistic model, which is an important but challenging task required for model selection (see e.g.~\citealp{OatPapGir16}).  
In modern scientific applications where complex phenomena are dealt with (e.g.~climate science), we often encounter situations where the evaluation of a likelihood function, which forms the integrand in marginal likelihood computation, involves an expensive simulation model, making the integrand complex and even black box.

If the integrand is a black box function, there is a trade-off between the risk of misspecification and gain in the rate of convergence for kernel-based quadrature rules; for a faster convergence rate, one may want to use a quadrature rule for a narrower $\H_k$ such as of higher order differentiability, while such a choice may cause misspecification of the function class.  Therefore it is of great importance to elucidate their convergence properties in misspecified situations, in order to make use of such quadrature rules in a safe manner.

\subsection{Contributions}
This paper provides convergence rates of kernel-based quadrature rules in misspecified settings, focusing on {\em deterministic} rules (i.e., without randomization).
The focus of misspecification is placed on the order of Sobolev spaces: the unknown order $s$ of the integrand $f$ is overestimated as $r$, that is, $s \leq r$.

Let $\Omega \subset \R^d$ be a bounded domain with a Lipschitz boundary (see Section~\ref{sec:BQ_well} for definition).  For $r>d/2$, consider a positive definite kernel $k_r$ on $\Omega$ that satisfies the following assumption;
\begin{assumption} \label{assumption:kernel}
The kernel $k_r$ on $\Omega$ satisfies $k_r(x,y) := \Phi(x-y)$, where $\Phi: \R^d \to \R$ is a positive definite function such that  
\[
C_1 (1 + \| \xi \|^2)^{-r} \leq \hat{\Phi} (\xi) \leq C_2 (1 + \| \xi \|^2)^{-r}
\]
for some constants $C_1, C_2 > 0$, where $\hat{\Phi}$ is the Fourier transform of $\Phi$. The RKHS $\H_{k_r}(\Omega)$ is the restriction of $\H_{k_r}(\R^d)$ to $\Omega$ (see Section \ref{sec:definition}).
\end{assumption}
The resulting RKHS $\H_{k_r}(\Omega)$ is norm-equivalent to the standard Sobolev space $H^r(\Omega)$. The Mat\'ern and Wendland kernels satisfy Assumption \ref{assumption:kernel} (see Section \ref{sec:definition}).
 
Consider a quadrature rule $\{ (w_i,X_i) \}_{i=1}^n$ with the kernel $k_r$ such that 
\begin{equation} \label{eq:wce_rate_intro190}
e_n(P;\H_{k_r}(\Omega)) = O(n^{-b}) \quad (n \to \infty).
\end{equation}
We do not specify how the weighted points are generated, but assume  (\ref{eq:wce_rate_intro190}) aiming for wide applicability.  Suppose that an integrand $f:\Omega \to \R$ has partial derivatives up to order $s$ and they are bounded and uniformly continuous. 
If $s \leq r$, the integrand may not belong to the assumed RKHS $\H_{k_r}$, in which case a misspecification occurs.

Under this misspecified setting, two types of assumptions on the quadrature rule $\{ (w_i,X_i) \}_{i=1}^n$ will be considered: one on the quadrature weights $w_1,\dots,w_n$ (Section \ref{sec:upper_weights}), and the other on the design points $X_1,\dots,X_n$ (Section \ref{sec:upper_sep}). 
In both cases, a rate of convergence of the form 
\begin{equation} \label{eq:best_rate_205}
| P_n f - P f | = O(n^{-bs/r}), \quad (n\to \infty)
\end{equation}
will be derived under some additional conditions.
The results guarantee the convergence in the misspecified setting, and the rate is determined by the ratio $s/r$ between the true smoothness $s$ and the assumed smoothness $r$.
As discussed in Section \ref{sec:definition}, the optimal rate of deterministic quadrature rules for the Sobolev space $H^r(\Omega)$ is $O(n^{-r/d})$ \cite{Nov88}.  
If a quadrature rule satisfies this optimal rate (i.e., $b=r/d$), then the rate (\ref{eq:best_rate_205}) becomes $O(n^{-s/d})$ for an integrand $f\in H^s(\Omega)$ ($s<r$), which matches the optimal rate for $H^s(\Omega)$.  

The specific results are summarized as follows:\vspace{-2mm}
\begin{itemize}
\item
In Section \ref{sec:upper_weights}, it is assumed that $\sum_{i=1}^n | w_i | = O(n^{c})$ as $n \to \infty$ for some constant $c \geq 0$.
Note that $c = 0$ is taken if the weights satisfy $\max_{i=1,\dots,n} |w_i| = O(n^{-1})$, an example of which is the equal weights $w_1 = \cdots =w_n = 1/n$. Under this assumption and other suitable conditions, Corollary \ref{coro:rate_weight} shows 
\begin{equation*} 
| P_n f - P f | = O( n^{ - bs/r + c (r-s)/r  } ) \quad (n \to \infty).
\end{equation*}
The rate $O(n^{-bs/r})$ in (\ref{eq:best_rate_205}) holds if $c = 0$.
Therefore this result provides convergence guarantees in particular for equal-weight quadrature rules, such as quasi Monte Carlo methods and kernel herding, in the misspecified setting.\vspace{-1mm}

\item
Section \ref{sec:upper_sep} uses an assumption on design points $X^n := \{X_1,\dots,X_n\}$ in terms of {\em separation radius} $q_{X_n}$, which is defined by 
\begin{equation}\label{eq:separation_radius}
q_{X_n} := \frac{1}{2} \min_{i \neq j} \| X_i - X_j \| .
\end{equation}
Corollary \ref{coro:sob_sepa} shows that, if $q_{X^n} = \Theta(n^{-a})$ as $n \to \infty$ for some $a > 0$, under other regularity conditions, 
\begin{equation} 
| P_n f - P f | = O(n^{- \min( b - a(r-s), as)} ) \quad (n \to \infty).
\end{equation}
The best possible rate is $O(n^{-bs/r})$ when $a = b/r$.
This result provides a convergence guarantee for quadrature rules that obtain the weights $w_1,\dots,w_n$ to give $O(n^{-b})$ for the worst case error with $X_1,\dots,X_n$ fixed beforehand. We demonstrate this result by applying it to Bayesian quadrature, as explained below.
Our result may also provide the following guideline for practitioners: in order to make a kernel quadrature rule robust to misspecification, one should specify the design points so that the spacing is not too small. \vspace{-1mm} 

\item
Section \ref{sec:BQ_quasi} discusses a convergence rate for Bayesian quadrature under the misspecified setting, demonstrating the results of Section \ref{sec:upper_sep}.  Given design points $X^n=\{X_1,\dots,X_n\}$, Bayesian quadrature defines weights $w_1,\ldots,w_n$ as the minimizer of the worst case error \eqref{eq:wce}, which can be obtained by solving a linear equation (see Section \ref{sec:rev_meth} for more detail).  For points $X^n=\{X_1,\dots,X_n\}$ in $\Omega$, the {\em fill distance} $h_{X^n,\Omega}$ is defined by  
\begin{equation}\label{eq:fill_dist}
h_{X^n, \Omega} := \sup_{x \in \Omega} \min_{i=1,\dots,n} \| x - X_i \|.
\end{equation}
Assume that there exists a constant $c_q > 0$ independent of $X^n$ such that
\begin{equation} \label{eq:quasi_uni_intro247}
h_{X^n,\Omega} \leq c_q q_{X^n},
\end{equation}
and that $h_{X^n,\Omega} = O(n^{- 1/d})$ as $n \to \infty$.
Then Corollary \ref{coro:BQ_misspecified_rate} shows that with Bayesian quadrature weights based on the kernel $k_r$ we have 
\begin{equation*} 
\left| P_n f - Pf \right| = O(n^{ - s/d }) \quad (n \to \infty). 
\end{equation*}
Note that the rate $O(n^{ - s/d })$ matches the minimax optimal rate for deterministic quadrature rules in the Sobolev space of order $s$  \citep{Nov88}, which implies that Bayesian quadrature can be {\em adaptive} to the unknown smoothness $s$ of the integrand $f$. The adaptivity means that it can achieve the rate $O(n^{-s/d})$ without the knowledge of $s$; it only requires the knowledge of the upper bound of the true smoothness $s \leq r$.\vspace{-1mm}

\item
Section \ref{sec:BQ_well} establishes a rate of convergence for Bayesian quadrature in the {\em well-specified} case, which serves as a basis for the results in the misspecified case (Section \ref{sec:BQ_quasi}).  Corollary \ref{coro:BQ_rate_well} asserts that if the the design points satisfy $h_{X^n, \Omega} = O(n^{-1/d})$ as $n \to \infty$, then 
\[ e_n(P; \H_{k_r}(\Omega)) = O(n^{-r/d}) \quad (n\to \infty).\]
This rate $O(n^{-r/d})$ is minimax optimal for deterministic quadrature rules in Sobolev spaces.  
To the best of our knowledge, this optimality of Bayesian quadrature has not been established before, while recently there has been extensive theoretical analysis on Bayesian quadrature \citep{BriOatGirOsb15,BriOatGirOsbSej15,OatCocBriGir16,Bac17}.

\end{itemize}

This paper is organized as follows.
Section \ref{sec:definition} provides various definitions, notation and preliminaries including reviews on kernel-based quadrature rules.
Section \ref{sec:BQ_well} then establishes a rate of convergence for the worst case error of Bayesian quadrature in a Sobolev space.
Section \ref{sec:sobolev} presents the main contributions on the convergence analysis in misspecified settings, and Section \ref{sec:BQ_quasi} demonstrates these results by applying them to Bayesian quadrature. 
We illustrate the obtained theoretical results with simulation experiments in Section \ref{sec:experiments}.
Finally Section \ref{sec:conclude} concludes the paper with possible future directions.\vspace{-2mm}

\paragraph{Preliminary results.}
This paper expands on preliminary results reported in a conference paper by the authors \citep{KanSriFuk16}.
Specifically, this paper is a complete version of the results presented in Section 5 of \cite{KanSriFuk16}.
The current paper contains significantly new topics mainly in the following points:
(i) We establish the rate of convergence for Bayesian quadrature with deterministic design points, and show that it can achieve minimax optimal rates in Sobolev spaces (Section \ref{sec:BQ_well});
(ii) We apply our general convergence guarantees in misspecified settings to the specific case of Bayesian quadrature, and reveal the conditions required for Bayesian quadrature to be robust to misspecification (Section \ref{sec:BQ_quasi});
To make the contribution (ii) possible, we derive finite sample bounds on quadrature error in misspecified settings (Section \ref{sec:sobolev}).
These results are not included in the conference paper.

We also mention that this paper does not contain the results presented in Section 4 of the conference paper \cite{KanSriFuk16}, which deal with {\em randomized} design points. 
For randomized design points, theoretical analysis can be done based on an approximation theory developed in the statical learning theory literature \citep{CucZho07}. 
On the other hand, the analysis in the deterministic case makes use of the approximation theory developed by \cite{NarWar04}, which is based on Calder\'on's decomposition formula in harmonic analysis \citep{FraJawWei91}.
This paper focuses on the deterministic case, and we will report a complete version of the randomized case in a forthcoming paper.\vspace{-3mm}

\paragraph{Related work.}
The setting of this paper is complementary to that of \cite{OatGir16}, in which the integrand is {\em smoother} than assumed. 
That paper proposes to apply the control functional method by \cite{OatGirCho17} to Quasi Monte Carlo integration, in order to make it adaptable to the (unknown) greater smoothness of the integrand.

Another related line of research is the proposals of quadrature rules that are adaptive to less smooth integrands \citep{Dic07,Dic08,Dic11,FusHanNarWarWri14,GodDic15}.
For instance, \cite{FusHanNarWarWri14} proposed a kernel-based quadrature rule on a finite dimensional {\em sphere}.
Their method is essentially a Bayesian quadrature using a specific kernel designed for spheres. 
They derive convergence rates for this method both in well-specified and misspecified settings, and obtain results similar to ours. The current work differs from \cite{FusHanNarWarWri14} in mainly two aspects: 
(i) quadrature problems considered in standard Euclidean spaces, as opposed to spheres;
(ii) a generic framework is presented, as opposed to the analysis of a specific quadrature rule.
See also a recent work by \cite{Xi_ICML2018}, in which Bayesian quadrature for vector-valued numerical integration is proposed and its adaptability to the less smooth integrands is discussed.

Quasi Monte Carlo rules based on a certain digit interlacing algorithm \citep{Dic07,Dic08,Dic11,GodDic15} are also shown to be adaptive to the (unknown) lower smoothness of an integrand.
These papers assume that an integrand is in an {\em anisotropic} function class in which every function possesses (square-integrable) partial mixed derivatives of order $\alpha \in \N$ in {\em each} variable. 
Examples of such spaces include Korobov spaces, Walsh spaces, and Sobolev spaces of dominating mixed smoothness (see e.g.~\citealp{NovWoz10,DicKuoSlo13}).
In their notation, an integer $d$, which is a parameter called an interlacing factor, can be regarded as an assumed smoothness. 
Then, if an integrand belongs to an anisotropic function class with smoothness $\alpha \in \N$ such that $\alpha \leq d$, the rate of the form $O(n^{-\alpha + \ve})$ (or $O(n^{-\alpha -1/2 + \ve})$ in a randomized setting) is guaranteed for the quadrature error for arbitrary $\ve > 0$.
The present work differs from these works in that (i) isotropic Sobolev spaces are discussed, where the order of differentiability is identical in all directions of variables, and that (ii) theoretical guarantees are provided for generic quadrature rules, as opposed to analysis of specific quadrature methods.

\section{Preliminaries}\label{sec:definition}
In this section, we present the required preliminaries. 
\subsection{Basic definitions and notation}
We will use the following notation throughout the paper. 
The set of positive integers is denoted by $\N$, and $\mathbb{N}_0 := \N \cup \{ 0 \}$. For $\alpha := (\alpha_1,\dots,\alpha_d)^T \in \N_0^d$, we write $| \alpha | := \sum_{i=1}^d \alpha_i$.
The $d$-dimensional Euclidean space is denoted by $\R^d$, and the closed ball of radius $R>0$ centered at $z\in\R^d$ by $B(z,R)$.  
For $a \in \R$, $\lfloor a \rfloor$ is the greatest integer that is less than $a$. 
For a set $\Omega \subset \R^d$, ${\rm diam} (\Omega) := \sup_{x, y \in \Omega} \| x - y\|$ is the diameter of $\Omega$.

Let $p>0$ and $\mu$ be a Borel measure on a Borel set $\Omega$ in $\R^d$.   
The Banach space $L_p(\mu)$ of $p$-integrable functions is defined in the standard way with norm $\| f \|_{L_p(\mu)} = (\int |f(x)|^p d\mu(x))^{1/p}$, and $L_\infty(\Omega)$ is the class of essentially bounded measurable functions on $\Omega$ with norm $\| f \|_{L_\infty(\Omega)} := {\rm ess}\sup_{x \in \Omega} |f(x)|$.
If $\mu$ is the Lebesgue measure on $\Omega \subset \R^d$, we write $L_p(\Omega) := L_p(\mu)$ and further $L_p := L_p(\R^d)$ for $p \in \N \cup \{ \infty \}$.
For $f \in L_1(\R^d)$, its Fourier transform $\hat{f}$ is defined by 
\[ 
\hat{f}(\xi) := \int_{\R^d} f(x) e^{- i \xi^T x} dx, \quad \xi \in \R^d,
\]
where $i := \sqrt{-1}$. 

For $s \in \N$ and an open set $\Omega$ in $\R^d$, $C^s(\Omega)$ denotes the vector space of all functions on $\Omega$ that are continuously differentiable up to order $s$, and  
$C_B^s(\Omega) \subset C^s(\Omega)$ the Banach space of all functions whose partial derivatives up to order $s$ are bounded and uniformly continuous. 
The norm of $C_B^s(\Omega)$ is given by $\| f \|_{C_B^s(\Omega)} := \sum_{\alpha \in \N_0^d: | \alpha | \leq s} \sup_{x \in \Omega} |\partial^\alpha f (x)| $, where $\partial^\alpha$ is the partial derivative with multi-index $\alpha \in \N_0^d$.
The Banach space of the continuous functions that vanish at infinity is denoted by $C_0 := C_0(\R^d)$ with sup norm. Let $C_0^s := C_0^s(\R^d) := C_0(\R^d) \cap C_B^s(\R^d)$ be a Banach space with 
the norm $\| f \|_{C_0^s(\R^d)} := \| f \|_{C_B^s(\R^d)}$.

For function $f$ and a measure $\mu$ on $\R^d$, the support of $f$ and $\mu$ are denoted by ${\rm supp}(f)$ and ${\rm supp} (\mu)$, respectively. The restriction of $f$ to a subset $\Omega\in\R^d$ is denoted by $f|_\Omega$.

Let $F$ and $F^*$ be normed vector spaces with norms $\| \cd \|_F$ and $\| \cd \|_{F^*}$, respectively. 
Then $F$ and $F^*$ are said to be {\em norm-equivalent}, if 
$F= F^*$ as a set, and there exists constants $C_1, C_2 > 0$ such that $C_1 \| f \|_{F^*} \leq \| f \|_{F} \leq C_2 \| f \|_{F^*}$ for all $f \in F$.
For a Hilbert space $\H$ with inner product $\langle\cdot,\cdot\rangle_\H$, the norm of $f\in\H$ is denoted by $\|f\|_\H$.

\subsection{Sobolev spaces and reproducing kernel Hilbert spaces} \label{sec:sobolev_pre}
Here we briefly review key facts regarding Sobolev spaces necessary for stating and proving our contributions; for details we refer to \cite{AdaFou03,Tri06,BreSco08}.
We first introduce reproducing kernel Hilbert spaces. For details, see, e.g., \cite[Section 4]{SteChr2008} and \cite[Section 10]{Wen05}.

Let $\Omega$ be a set.  
A Hilbert space $\H$ of real-valued functions on $\Omega$ is a reproducing kernel Hilbert space (RKHS) if the functional $f\mapsto f(x)$ is continuous for any $x\in\Omega$.  
Let $\langle\cdot,\cdot\rangle_\H$ be the inner product of $\H$.  Then, there is a unique function $k_x\in\H$ such that $f(x)=\langle f,k_x\rangle_\H$.  The kernel defined by $k(x,y):=k_x(y)$ is positive definite, and called reproducing kernel of $\H$.  It is known  
(Moore-Aronszajn theorem \citep{Aronszajn1950}) that for every positive definite kernel $k: \Omega \times \Omega \to \R$ there exists a unique RKHS $\H$ with $k$ as the reproducing kernel. Therefore, the notation $\H_k$ is used to the RKHS associated with $k$.

In the following, we will introduce two definitions of Sobolev spaces, i.e., (\ref{eq:sobolev_def}) and (\ref{eq:Sobolev_fourier}), as both will be used throughout our analysis.

For a measurable set $\Omega \subset \R^d$ and $r \in \mathbb{N}$, a Sobolev space $W_2^r(\Omega)$ of order $r$ on $\Omega$ is defined by 
\begin{equation} \label{eq:sobolev_def}
W_2^r(\Omega)  := \{ f \in L_2(\Omega): D^\alpha f  \in L_2(\Omega)\ {\rm exists\,\,for\,\,all\,}\ \alpha \in \N_0^d\ {\rm with}\ | \alpha | \leq r \},
\end{equation}
where  $D^\alpha f$ denotes the $\alpha$-th weak derivative of $f$. This is a Hilbert space with inner-product 
\[ \left<f, g  \right>_{W_2^r(\Omega)} = \sum_{ |\alpha | \leq r} \left< D^\alpha f, D^\alpha g \right>_{L_2(\Omega)}, \quad f, g \in W_2^r(\Omega).\]
For a positive real $r > 0$, another definition of Sobolev space of order $r$ on $\R^d$ is given by 
\begin{equation} \label{eq:Sobolev_fourier}
H^r(\R^d) := \left\{ f \in L_2(\R^d): \int  |\hat{f}(\xi)|^2  \hat{\Phi}(\xi)^{-1} d\xi < \infty \right\},
\end{equation}
where the function $\hat{\Phi}: \R^d \to \R$ is defined by
\[ 
\hat{\Phi}(\xi) := (1 + \| \xi \|^2 )^{-r}, \quad \xi \in \R^d. 
\]
The inner product of $H^r(\R^d)$ is defined by 
\[
\left< f, g \right>_{H^r(\R^d)} :=  \int   \hat{f}(\xi) \overline{\hat{g}(\xi)}\hat{\Phi}(\xi)^{-1} d\xi , \quad f,g \in H^r(\R^d),
\]
where $\overline{\hat{g}(\xi)}$ denotes the complex conjugate of $\hat{g} (\xi) $.

For a measurable set $\Omega$ in $\R^d$, the (fractional order) Sobolev space $H^r(\Omega)$ is defined by the restriction of $H^r(\R^d)$; namely (see, e.g., \citealp[Eq.~(1.8) and Definition 4.10]{Tri06})
\[
H^r(\Omega) := \left\{ f: \Omega \to \R: f = g|_\Omega,\ \exists\, g \in H^r(\R^d)  \right\}
\]
with its norm defined by
\[
\| f \|_{H^r(\Omega)} := \inf \left\{ \| g \|_{H^r(\R^d)}: g \in H^r(\R^d)\ \text{s.t.}\ f = g|_\Omega  \right\}.
\]
If $r \in \mathbb{N}$ and $\Omega$ is an open set with Lipschitz boundary (see Definition \ref{def:Lipschitz_boundary}), then $H^r(\Omega)$ is norm-equivalent to $W_2^r(\Omega)$ (see, e.g., \citealp[Eqs.~(1.8), (4.20)]{Tri06}).

If $r > d/2$, the Sobolev space $H^r(\R^d)$ is an RKHS \cite[Section 10]{Wen05}. 
In fact, the condition $r > d/2$ guarantees that the function $\hat{\Phi}(\xi) = (1 + \| \xi \|^2)^{-r}$ is integrable, so that $\hat{\Phi}(\xi)$ has a (inverse) Fourier transform
\[
\Phi(x) = \frac{2^{1-r}}{\Gamma(r)} \| x \|^{r-d/2} K_{r-d/2} (\| x \|),
\]
where $\Gamma$ denotes the Gamma function and $K_{r-d/2}$ is the modified Bessel function function of the third kind of order $r-d/2$. The function $\Phi$ is positive definite, and the kernel $\Phi(x-y)$ gives $H^r(\R^d)$ as an RKHS.
This kernel $\Phi(x-y)$ is essentially a Mat\'ern kernel \citep{Mat60,Mat86} with specific parameters.
A {\em Wendland kernel} \citep{Wen95} also defines an RKHS that is norm-equivalent to $H^r(\R^d)$.

\subsection{Kernel-based quadrature rules}
\label{sec:wce}

We briefly review basic facts regarding kernel-based quadrature rules necessary to describe our results. 
For details we refer to \cite{BriOatGirOsbSej15,DicKuoSlo13}.

Let $\Omega \subset \R^d$ be an open set, $k$ be a measurable kernel on $\Omega$, and $\H_k(\Omega)$ be the RKHS of $k$ with inner-product $\left< \cd, \cd\right>_{\H_k(\Omega)}$. 
Suppose $P$ is a Borel probability measure on $\R^d$ with its support contained in $\Omega$, and $\{ (w_i, X_i) \}_{i=1}^n \subset (\R \times \Omega)^n$ is weighted points, which serve for quadrature.
For an integrand $f$, define $Pf := \int f(x)dP(x)$ and $P_nf := \sum_{i=1}^n w_i f(X_i)$ respectively as the integral and a quadrature estimate as in \eqref{eq:dist_notation}.
As mentioned in Section \ref{sec:intro}, a kernel quadrature rule aims at minimizing the worst case error  
\begin{equation} \label{eq:wce_preliminary}
e_n(P;\H_k(\Omega)) := \sup_{f \in \H_k: \| f \|_{\H_k(\Omega)} \leq 1} \left| Pf - P_n f \right|.
\end{equation}
Assume $\int \sqrt{k(x,x)}\,dP(x)<\infty$, and define $m_P, m_{P_n}$\footnote{In the machine learning literature, the function $m_P$ is known as {\em kernel mean embedding}, and the worst case error is called the {\em maximum mean discrepancy}, which have been used in a variety of problems including two-sample testing \citep{SriGreFukSchetal10,GreBorRasSchetal12,MuaFukSriSch17}.}  $\in\H_k(\Omega)$ by 
\begin{equation}
m_P(y) := \int k(y,x)dP(x),  \quad m_{P_n}(y) :=  \sum_{i=1}^n w_i k(y,X_i), \quad y\in \Omega \label{eq:kmean},
\end{equation}
where the integral for $m_P$ is understood as the Bochner integral. 
It is easy to see that, for all $f\in\H$,
\[
P f  = \langle f,m_P\rangle_{\H_k(\Omega)}, \quad P_n f  = \langle f,m_{P_n}\rangle_{\H_k(\Omega)}.
\]
The worst case error (\ref{eq:wce_preliminary}) can then be written as 
\begin{equation} \label{eq:wce_means}
e_n(P;\H_k(\Omega)) = \| m_P - m_{P_n} \|_{\H_k(\Omega)},
\end{equation}
and for any $f\in \H_k(\Omega)$ 
\begin{equation}\label{eq:bounded_well_wce}
| P_n f - P f |  \leq   \| f \|_{\H_k(\Omega)} e_n(P;\H_k(\Omega)). 
\end{equation}
\label{sec:closed_form_wce}
It follows from (\ref{eq:wce_means}) that

\begin{equation} \label{eq:kmean_analytic}
e^2_n(P;\H_k(\Omega))=\int \int k(x,\tx)dP(x)dP(\tx) - 2 \sum_{i=1}^n w_i \int k(x,X_i)dP(x) 
\sum_{i=1}^n \sum_{j=1}^n w_i w_j k(X_i,X_j).
\end{equation}
The integrals in (\ref{eq:kmean_analytic}) are known in closed form for many pairs of $k$ and $P$ (see e.g.\ Table 1 of \cite{BriOatGirOsbSej15}); for instance, it is known if $k$ is a Wendland kernel and $P$ is the uniform distribution on a ball in $\R^d$.
One can then explicitly use the formula (\ref{eq:kmean_analytic}) in order to obtain weighted points $\{ (w_i,X_i) \}$ that minimizes the worst case error (\ref{eq:wce_preliminary}).


\subsection{Examples of kernel-based quadrature rules}  \label{sec:rev_meth}

\paragraph{Bayesian quadrature.}
This is a class of kernel-based quadrature rules that has been studied extensively in literature on statistics and machine learning \citep{diaconis1988bayesian,Oha91,minka2000deriving,GhaZou03,osborne_active_2012,HusDuv12,gunter_sampling_2014,BriOatGirOsb15,BriOatGirOsbSej15,pmlr-v70-briol17a,SarHarSveSan16,Bac17,OatGirCho17}.
In Bayesian quadrature, design points $X_1,\dots,X_n$ may be obtained jointly in a deterministic manner \citep{diaconis1988bayesian,Oha91,minka2000deriving,BriOatGirOsbSej15,SarHarSveSan16}, sequentially (adaptively) \citep{osborne_active_2012,HusDuv12,gunter_sampling_2014,BriOatGirOsb15}, or randomly \citep{GhaZou03,BriOatGirOsbSej15,pmlr-v70-briol17a,Bac17,OatGirCho17}. 
For instance, \cite{BriOatGirOsbSej15} proposes to generate design points randomly as a Markov Chain Monte Carlo sample, or deterministically by a Quasi Monte Carlo rule, specifically as a higher-order digital net \citep{Dic08}.

Given the design points being fixed,  quadrature weights $w_1,\dots,w_n$ are then obtained by the minimization of the worst case error (\ref{eq:kmean_analytic}),
which can be done analytically by solving a linear system of size $n$.
To describe this, let $X_1,\dots,X_n$ be design points such that the kernel matrix $K := (k(X_i,X_j))_{i,j}^n \in \R^{n \times n}$ is invertible. 
The weights are then given by 
\begin{equation} \label{eq:BQ_weight}
{\bm w} := (w_1,\dots,w_n)^T = K^{-1} {\bm z} \in \R^n,
\end{equation}
where  ${\bm z} := ( m_P(X_i) )_{i=1}^n \in \R^n$, with $m_P$ defined in (\ref{eq:kmean}).

This way of constructing the estimate $P_n f$ is called {\em Bayesian quadrature}, since $P_n f$ can be seen as a posterior estimate in a certain Bayesian inference problem with $f$ generated as sample of a Gaussian process (see, e.g., \cite{HusDuv12} and \cite{BriOatGirOsbSej15}).\vspace{-3mm}

\paragraph{Quasi Monte Carlo.}
Quasi Monte Carlo (QMC) methods are equal-weight quadrature rules designed for the uniform distribution on a hyper-cube $[0,1]^d$ \citep{DicKuoSlo13}.
Modern QMC methods make use of RKHSs and the associated kernels to define and calculate the worst case error in order to obtain good design points (e.g.~\citealp{Hic98,SloWoz98,Dic07,DicNuyPil14}).
Therefore, such QMC methods are instances of kernel-based quadrature rules; see \cite{NovWoz10} and \cite{DicKuoSlo13} for a review.\vspace{-3mm}

\paragraph{Kernel herding.}
In the machine learning literature, an equal-weight quadrature rule called {\em kernel herding} \cite{CheWelSmo10} has been studied extensively \citep{HusDuv12,BacJulObo12,LacLinBac15,KanNisGreFuk16}.
It is an algorithm that greedily searches for design points so as to minimize the worst case error in an RKHS.
In contrast to QMC methods, kernel herding may be used with an arbitrarily distribution $P$ on a generic measurable space, given that the integral $\int k(\cd,x)dP(x)$ admits a closed form solution with a reproducing kernel $k$.
It has been shown that a fast rate $O(n^{-1})$ is achievable for the worst case error, when the RKHS is finite dimensional \citep{CheWelSmo10}.
While empirical studies indicate that the fast rate would also hold in the case of an infinite dimensional RKHS, its theoretical proof remains an open problem \citep{BacJulObo12}.

\section{Convergence rates of Bayesian quadrature} \label{sec:BQ_well}

This section discusses the convergence rates of Bayesian quaratuere in well-specified settings.  
It is shown that Bayesian quadrature can achieve the minimax optimal rates for deterministic quadrature rules in Sobolev spaces.   
The result also serves as a preliminary to Section \ref{sec:BQ_quasi}, where misspecified cases are considered.
 

Let $\Omega$ be an open set in $\R^d$ and $X^n := \{ X_1,\dots, X_n\}\subset\Omega$. 
The main notion to express the convergence rate is fill distance $h_{X^n,\Omega}$ (\ref{eq:fill_dist}), 
which plays a central role in the literature on scattered data approximation \citep{Wen05}, and has been used in the theoretical analysis of Bayesian quadrature in \cite{BriOatGirOsbSej15,OatCocBriGir16}.

It is necessary to introduce some conditions on $\Omega$.  The first one is the {\em interior cone condition} \cite[Definition 3.6]{Wen05}, which is a regularity condition on the boundary of $\Omega$.
A {\em cone} $C(x, \xi(x), \theta, R)$ with vertex $x \in \R^d$, direction $\xi(x) \in \R^d$ ($\| \xi(x) \| = 1$), angle $\theta \in (0,2\pi)$ and radius $R > 0$ is defined by
$$C(x, \xi(x), \theta, R) := \{ x + \lambda y:\ y \in \R^d,\ \|y\| = 1,\ \left<y, \xi(x) \right> \geq \cos \theta,\ \lambda \in [0,R] \}.$$
\vspace{-5mm}
\begin{definition}[Interior cone condition] \label{def:interior_cone}
A set $\Omega \subset \R^d$ is said to satisfy an interior cone condition if 
there exist an angle $\theta \in (0,2\pi)$ and a radius $R > 0$ such that every $x \in \Omega$ is associated with a unit vector $\xi(x)$ so that the cone $C(x, \xi(x), \theta, R)$ is contained in $\Omega$.
\end{definition}
The interior cone condition requires that there is no `pinch point' (i.e.~a $\prec$-shape region) on the boundary of $\Omega$; see also \cite{OatCocBriGir16}. 

Next, the notions of special Lipschitz domain \cite[p.181]{Ste70} and Lipschitz boundary\footnote{The definition of the Lipschitz boundary in \cite{BreSco08} is identical to the definition of the {\em minimally smooth boundary} in \citep[p.189]{Ste70}. This boundary condition was introduced by Elias M. Stein to prove the so-called {\em Stein's extension theorem} for Sobolev spaces \citep[p.181]{Ste70}.} are defined as follows (see \citealp[p.189]{Ste70}; \citealp[Definition 1.4.4]{BreSco08}). 


\begin{definition}[Special Lipschitz domain] \label{def:special_Lip}
For $d \geq 2$, an open set $\Omega \subset \R^d$ is called a special Lipschitz domain, if there exists a rotation of $\Omega$, denoted by $\tilde{\Omega}$, and a function $\varphi: \R^{d-1} \to \R$ that satisfy the following:
\begin{enumerate}
\item 
$\tilde{\Omega} = \{  (x,y) \in \R^d: y > \varphi(x) \}$;\vspace{1mm}
\item $\varphi$ is a Lipschitz function such that  $|\varphi(x) - \varphi (x') | \leq M \| x - x' \|$ for all $x,x' \in \R^{d-1}$, where $M > 0$.
\end{enumerate}
The smallest constant  $M$ for $\varphi$ is called the Lipschitz bound of $\Omega$.
\end{definition}

\begin{definition}[Lipschitz boundary] \label{def:Lipschitz_boundary}
Let $\Omega \subset \R^d$ be an open set and $\partial \Omega$ be its boundary.
Then $\partial \Omega$ is called a Lipschitz boundary, if there exist constants $\ve > 0$, $N\in\N$, $M$ > 0, and open sets $U_1,U_2,\dots, U_L \subset \R^d$, where $L \in \mathbb{N} \cup \{ \infty \}$, such that the following conditions are satisfied:
\begin{enumerate}
\item For any $x \in \partial \Omega$, there exists an index $i$ such that $B(x,\ve) \subset U_i$, where $B(x,\ve)$ is the ball centered at $x$ and radius $\ve$;\vspace{1mm}
\item  $U_{i_1} \cap \cdots \cap U_{i_{N+1}} = \emptyset$ for any distinct indices $\{ i_1,\dots,i_{N+1} \}$;\vspace{1mm}
\item For each index $i$, there exists a special Lipschitz domain $\Omega_i \subset \R^d$ with Lipschitz bound $b$ such that $U_i \cap \Omega = U_i \cap \Omega_i$ and $b\leq M$.
\end{enumerate}
\end{definition}

Examples of a set $\Omega$ having a Lipschitz boundary include: (i) $\Omega$ is an open bounded set whose boundary $\partial \Omega$ is $C^1$ embedded in $\R^d$; 
(ii) $\Omega$ is an open bounded convex set \citep[p.189]{Ste70}.

\begin{proposition} \label{prop:BQ_fill}
Let $\Omega \subset \R^d$ be a bounded open set such that an interior cone condition is satisfied and the boundary $\partial\Omega$ is Lipschitz, and $P$ be a probability distribution on $\R^d$ with a bounded density function $p$ such that ${\rm supp}(P) \subset \Omega$.
For $r\in\R$ with $\lfloor r \rfloor > d/2$, $k_r$ is a kernel on $\R^d$ that satisfies Assumption \ref{assumption:kernel} and $\H_{k_r}(\Omega)$ is the RKHS of $k_r$ restricted on $\Omega$.
Suppose that $X^n := \{X_1,\dots,X_n\} \subset \Omega$ are finite points such that $G := (k_r(X_i,X_j))_{i,j=1}^n \in \R^{n \times n}$ is invertible, and $w_1,\dots,w_n$ are the quadrature weights given by (\ref{eq:BQ_weight}).
Then there exist constants $C > 0$ and $h_0 > 0$ independent of $X^n$, such that 
\[ 
e_n(P; \H_{k_r}(\Omega))  \leq C h_{X^n,\Omega}^r,
\]
provided that $h_{X^n,\Omega} \leq h_0$, where $e_n(P; \H_{k_r}(\Omega))$ is the worst case error for the quadrature rule $\{ (w_i,X_i) \}_{i=1}^n$.
\end{proposition}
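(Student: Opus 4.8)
The plan is to use two facts: Bayesian quadrature integrates the kernel interpolant, and the resulting integration error is controlled by an $L_2$ interpolation (``sampling'') inequality whose exponent is the full Sobolev order $r$. The latter, rather than the cruder $L_\infty$ interpolation bound, is what produces the rate $h_{X^n,\Omega}^r$.

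First I would record the identity that the Bayesian quadrature estimate equals the integral of the interpolant. For $f\in\H_{k_r}(\Omega)$ let $\bmf:=(f(X_1),\dots,f(X_n))^T$ and let $s_f:=\sum_{i=1}^n\alpha_i k_r(\cd,X_i)$ with $\bmalpha:=G^{-1}\bmf$ be its kernel interpolant at $X^n$, so $s_f(X_j)=f(X_j)$. Since $m_P(X_i)=Pk_r(\cd,X_i)$ and $\bmz=(m_P(X_i))_{i=1}^n$, the weights $\bmw=G^{-1}\bmz$ give, using the symmetry of $G$,
\[
P(s_f)=\sum_{i=1}^n\alpha_i\,m_P(X_i)=\bmalpha^T\bmz=\bmf^T G^{-1}\bmz=\bmf^T\bmw=\sum_{i=1}^n w_i f(X_i)=P_nf .
\]
Hence $Pf-P_nf=P(f-s_f)$ for every $f\in\H_{k_r}(\Omega)$. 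Moreover $s_f$ is the $\H_{k_r}(\Omega)$-orthogonal projection of $f$ onto $V:=\mathrm{span}\{k_r(\cd,X_i)\}_{i=1}^n$ (its coefficients solve the same normal equations), so $\|f-s_f\|_{\H_{k_r}(\Omega)}\le\|f\|_{\H_{k_r}(\Omega)}$.

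Next I would bound the integration error through the density. As $\mathrm{supp}(P)\subset\Omega$, $\Omega$ is bounded, and $p$ is bounded, Cauchy--Schwarz gives, for any $f$ with $\|f\|_{\H_{k_r}(\Omega)}\le1$,
\[
|Pf-P_nf|=\left|\int_\Omega(f-s_f)\,p\,dx\right|\le\|p\|_{L_2(\Omega)}\,\|f-s_f\|_{L_2(\Omega)},
\]
where $\|p\|_{L_2(\Omega)}\le\|p\|_{L_\infty(\Omega)}\,|\Omega|^{1/2}<\infty$. Taking the supremum over the unit ball, the problem reduces to bounding $\|f-s_f\|_{L_2(\Omega)}$ uniformly. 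The crux is the $L_2$ sampling inequality for Sobolev functions: under the interior cone condition and Lipschitz boundary there are constants $C,h_0>0$ depending only on $\Omega,r,d$ such that whenever $h_{X^n,\Omega}\le h_0$, every $u\in H^r(\Omega)$ vanishing on $X^n$ obeys
\[
\|u\|_{L_2(\Omega)}\le C\,h_{X^n,\Omega}^{\,r}\,\|u\|_{H^r(\Omega)} .
\]
Applying this to $u=f-s_f$ (it vanishes on $X^n$ and lies in $H^r(\Omega)$ by Assumption~\ref{assumption:kernel}), using norm-equivalence to get $\|f-s_f\|_{H^r(\Omega)}\le C_{\mathrm{eq}}\|f-s_f\|_{\H_{k_r}(\Omega)}\le C_{\mathrm{eq}}\|f\|_{\H_{k_r}(\Omega)}$, and combining the displays yields $e_n(P;\H_{k_r}(\Omega))\le C''h_{X^n,\Omega}^{\,r}$.

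The main obstacle is the sampling inequality, and in particular obtaining the full exponent $r$: the pointwise power-function bound $\|k_r(\cd,x)-\Pi_V k_r(\cd,x)\|_{\H_{k_r}}\le Ch_{X^n,\Omega}^{\,r-d/2}$ would, via $e_n\le\int\|k_r(\cd,x)-\Pi_Vk_r(\cd,x)\|_{\H_{k_r}}\,dP(x)$, only give the suboptimal rate $h_{X^n,\Omega}^{\,r-d/2}$, so the averaging effect of integration must be captured in the $L_2$ (rather than $L_\infty$) norm. A clean derivation proceeds by (i) a Stein extension of $u$ from $\Omega$ to $\R^d$ preserving the Sobolev norm, which is exactly where the Lipschitz boundary is used; (ii) local Bramble--Hilbert / polynomial-reproduction estimates on cones, where the interior cone condition enters; and (iii) checking that all constants are independent of $X^n$, so a single $h_0$ works. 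For this step I would invoke or adapt the scattered-data estimates in \cite{Wen05} and \cite{NarWar04}.
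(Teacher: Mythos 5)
Your proposal is correct and follows essentially the same route as the paper's proof: the identity $P_nf = P(s_f)$ for the kernel interpolant, followed by a zero-set sampling inequality with the full exponent $r$ from the scattered-data literature (the paper invokes Theorem 11.32 and Corollary 11.33 of \cite{Wen05}, pairing $\|f-f_n\|_{L_1(\Omega)}$ with $\|p\|_{L_\infty(\Omega)}$ where you pair $L_2$ with $L_2$ — an immaterial difference on a bounded $\Omega$ with bounded $p$). Your explicit use of the orthogonal-projection bound $\|f-s_f\|_{\H_{k_r}}\le\|f\|_{\H_{k_r}}$ is just the step by which Wendland's Corollary 11.33 is obtained from his Theorem 11.32, so nothing is missing.
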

\begin{proof}
The proof idea is borrowed from \cite[Theorem 1]{BriOatGirOsbSej15}. 
Let $f \in \H_{k_r}(\Omega)$ be arbitrary and fixed.
Define a function $f_n \in \H_{k_r}(\Omega)$ by 
\[ f_n:= \sum_{i=1}^n \alpha_i k_r(\cd,X_i) \]
 where $\bmalpha := (\alpha_1,\dots,\alpha_n)^T = G^{-1} \bmf \in \R^n$ and $\bmf := (f(X_1),\dots,f(X_n)) \in \R^n$.
This function is an interpolant of $f$ on $X^n$ such that $f(X_i) = f_n(X_i)$ for all $X_i \in X^n$  

It follows from the norm-equivalence that $f \in H^r(\Omega)$ and 
\begin{equation} \label{eq:norm_equivalence_110}
\| f \|_{H^r(\Omega)} \leq C_1 \| f \|_{\H_{k_r}(\Omega)},
\end{equation}
where $C_1 > 0$ is a constant.

We see that $\sum_{i=1}^n w_i f(X_i) = \int f_n(x) dP(x)$.
In fact, recalling that the weights ${\bm w} := (w_1,\dots,w_n)^T$ are defined as ${\bm w} = G^{-1} {\bm z}$, where ${\bm z} := (z_1,\dots,z_n)^T$ with $z_i := \int k_r(x,X_i)dP(x)$,
it follows that
\begin{equation*}
\sum_{i=1}^n w_i f(X_i) 
= \bmw^T \bmf = \bmz^T G^{-1} \bmf = \bmz^T \bmalpha  
=\sum_{i=1}^n \alpha_i \int k_r(x,X_i)dP(x)= \int f_n(x) dP(x).
\end{equation*}
Using this identity, we have
\begin{eqnarray}
 \left| \int f(x)dP(x) - \sum_{i=1}^n w_i f(X_i) \right|  
&=& \left| \int f(x)dP(x) - \int f_n(x)dP(x) \right| \nonumber \\
&\leq& \| f - f_n \|_{L_1(\Omega)} \| p \|_{L_\infty(\Omega)} \nonumber 
\\
&\leq & C_0 \| f \|_{H^r(\Omega)} h_{X^n,\Omega}^r \| p \|_{L_\infty(\Omega)} \label{eq:ineq_sob_fill} \\
&\leq & C_0 C_1 \| f \|_{\H_{k_r}(\Omega)} h_{X^n,\Omega}^r \| p \|_{L_\infty(\Omega)}, \label{eq:Sob_equivalence_126} 
\end{eqnarray}
where  
(\ref{eq:ineq_sob_fill}) follows from Theorem 11.32 and Corollary 11.33 in \cite{Wen05} (where we set $m := 0$, $p := 2$, $q := 1$, $k := \lfloor r \rfloor $ and $s := r - \lfloor r \rfloor$), 
and (\ref{eq:Sob_equivalence_126}) from (\ref{eq:norm_equivalence_110}).  
Note that constant $C_0$ depends only on $r$, $d$ and the constants in the interior cone condition (which follows from the fact that Theorem 11.32 in \citealp{Wen05} is derived from Proposition 11.30 in \citealp{Wen05}).  Setting 
$C := C_0 C_1 \| p \|_{\infty}$ completes the proof. \qed
\end{proof}

\begin{remark} \rm
\begin{itemize}
\item 
Typically the fill distance $h_{X^n,\Omega}$ decreases to $0$ as the number $n$ of design points increases.
Therefore the upper bound $C h_{X^n \Omega}^r$ provides a faster rate of convergence for $e_n(P; W_2^r(\Omega))$ by a larger value of the degree $r$ of smoothness.\vspace{-1mm}
\item
The condition $h_{X^n,\Omega} \leq h_0$ requires that the design points $X^n = \{ X_1,\dots,X_n \}$ must cover the set $\Omega$ to a certain extent in order to guarantee the error bound to hold.
This requirement arises since we have used a result from the scattered data approximation literature \citep[Corollary 11.33]{Wen05} to derive the inequality (\ref{eq:ineq_sob_fill}) in our proof.
In the literature such a condition is necessary and we refer an interested reader to Section 11 of \cite{Wen05} and references therein.\vspace{-1mm}
\item
The constant $h_0 > 0$ depends only on the constants $\theta$ and $\R$ in the interior cone condition (Definition \ref{def:interior_cone}). 
The explicit form is $h_0 := Q(\lfloor r \rfloor, \theta) R$, where $Q(\lfloor r \rfloor,\theta) := \frac{ \sin \theta \sin \psi }{8 \lfloor r \rfloor^2 (1 + \sin \theta) (1 + \sin \psi) }$ with $\psi := 2 \arcsin \frac{\sin \theta}{4(1+\sin \theta)}$ \cite[p.199]{Wen05}.\vspace{-1mm}

\end{itemize}
\end{remark}

The following is an immediate corollary to Proposition \ref{prop:BQ_fill}.

\begin{corollary} \label{coro:BQ_rate_well}
Assume that $\Omega$, $P$ and $r$ satisfy the conditions in Proposition \ref{prop:BQ_fill}.
Suppose that $X^n := \{ X_1, \dots, X_n \} \subset \Omega$ are finite points such that $G := (k_r(X_i,X_j))_{i,j=1}^n \in \R^{n \times n}$ is invertible and $h_{X^n, \Omega} = O(n^{- \alpha})$ for some $0 < \alpha \leq 1/d$ as $n \to \infty$, 
and $w_1,\dots,w_n$ are the quadrature weights given by (\ref{eq:BQ_weight}) based on $X^n$.
Then we have 
\begin{equation} \label{eq:rate_BQ_rate_well}
e_n(P;\H_{k_r}(\Omega))  = O(n^{- \alpha r}) \quad (n \to \infty),
\end{equation}
where $e_n(P; \H_{k_r}(\Omega))$ is the worst case error of the quadrature rule $\{ (w_i,X_i) \}_{i=1}^n$.
\end{corollary}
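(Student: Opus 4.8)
The plan is to treat this as the direct consequence of Proposition~\ref{prop:BQ_fill} that the corollary is advertised to be. Proposition~\ref{prop:BQ_fill} already supplies the pointwise bound $e_n(P;\H_{k_r}(\Omega)) \leq C\, h_{X^n,\Omega}^r$ together with the two constants $C>0$ and $h_0>0$ that are independent of $X^n$; the only work is to convert the hypothesis $h_{X^n,\Omega} = O(n^{-\alpha})$ into a statement to which the proposition applies and then to propagate the rate through the $r$-th power.

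First I would observe that the assumption $h_{X^n,\Omega} = O(n^{-\alpha})$ with $\alpha>0$ forces $h_{X^n,\Omega} \to 0$ as $n \to \infty$. Consequently there exists an integer $N_0$ such that $h_{X^n,\Omega} \leq h_0$ for all $n \geq N_0$, where $h_0$ is the threshold from Proposition~\ref{prop:BQ_fill}. This is the one genuine step of the argument: the error bound in the proposition is only valid once the design points fill $\Omega$ finely enough, and we must check that the prescribed decay of the fill distance eventually meets that requirement. For every $n \geq N_0$ the hypotheses of Proposition~\ref{prop:BQ_fill} are met (invertibility of $G$ is assumed in the corollary, and the conditions on $\Omega$, $P$, $r$ are inherited), so the bound $e_n(P;\H_{k_r}(\Omega)) \leq C\, h_{X^n,\Omega}^r$ holds there.

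It then remains to substitute the rate. By hypothesis there is a constant $C'>0$ and an integer $N_1$ with $h_{X^n,\Omega} \leq C' n^{-\alpha}$ for all $n \geq N_1$. Raising to the power $r>0$ preserves the inequality, giving $h_{X^n,\Omega}^r \leq (C')^r n^{-\alpha r}$. Combining this with the proposition's bound yields, for all $n \geq \max(N_0,N_1)$,
\[
e_n(P;\H_{k_r}(\Omega)) \leq C (C')^r n^{-\alpha r},
\]
which is precisely the claimed $O(n^{-\alpha r})$ as $n \to \infty$. The constraint $\alpha \leq 1/d$ plays no role in the deduction itself; it simply records the best fill distance attainable by $n$ points in a $d$-dimensional domain, so that the stated hypothesis is not vacuous. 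Since each step is either a limit observation or monotone substitution, there is no real obstacle here—the entire content of the corollary is front-loaded into Proposition~\ref{prop:BQ_fill}, and the mild point to be careful about is only the ``eventually $h_{X^n,\Omega} \leq h_0$'' threshold argument.
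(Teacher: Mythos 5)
Your proof is correct and follows exactly the route the paper intends: the paper presents this result as an immediate consequence of Proposition \ref{prop:BQ_fill} without writing out a proof, and your argument supplies precisely the two omitted details (that $h_{X^n,\Omega}\to 0$ eventually clears the threshold $h_0$, and that raising the $O(n^{-\alpha})$ bound to the power $r$ gives the rate). Nothing further is needed.
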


\begin{remark} \rm
\begin{itemize}
\item The result (\ref{eq:rate_BQ_rate_well}) implies that the same rate is attainable for the Sobolev space $H^r(\Omega)$ (instead of $H_{k_r}(\Omega))$:
\begin{equation}  \label{eq:BQ_well_rate_204}
e_n(P;H^r(\Omega))  = O(n^{- \alpha r}) \quad (n \to \infty)
\end{equation}
with (the sequence of) the same weighted points $\{ (w_i,X_i) \}_{i=1}^\infty$.
This follows from the norm-equivalence between $\H_{k_r}(\Omega)$ and $H^r(\Omega)$.\vspace{-1mm}
\item
If the fill distance satisfies $h_{X^n,\Omega} = O(n^{-1/d})$ as $n \to \infty$, then $e_n(P; H^r(\Omega)) = O(n^{- r/d})$.
This rate is minimax optimal for the deterministic quadrature rules for the Sobolev space $H^r(\Omega)$ on a hyper-cube \cite[Proposition 1 in Section 1.3.12]{Nov88}. 
Corollary \ref{coro:BQ_rate_well} thus shows that Bayesian quadrature achieves the minimax optimal rate in this setting.\vspace{-1mm}
\item
The decay rate for the fill distance $h_{X^n,\Omega} = O(n^{-1/d})$ holds when, for example, the design points $X^n = \{ X_1,\dots,X_n \}$ are equally-spaced grid points in $\Omega$.
Note that this rate cannot be improved: if the fill distance decreased at a rate faster than $O(n^{-1/d})$, then $e_n(P; H^r(\Omega))$ would decrease more quickly than the minimax optimal rate, which is a contradiction.
\end{itemize}
\end{remark}

\section{Main results} \label{sec:sobolev}

This section presents the main results on misspecified settings.  Two results based on different assumptions are discussed: one on the quadrature weights in Section \ref{sec:upper_weights}, and the other on the design points in Section \ref{sec:upper_sep}.  The approximation theory for Sobolev spaces developed by \cite{NarWar04} is employed in the results. 

\subsection{Convergence rates under an assumption on quadrature weights}
\label{sec:upper_weights}
\begin{theorem} \label{theo:sob_weight}
Let $\Omega \subset \R^d$ be an open set whose boundary is Lipschitz, $P$ be a probability distribution on $\R^d$ with ${\rm supp}(P) \subset \Omega$,  
$r$ be a real number with $r>d/2$, and $s$ be a natural number with $s \leq r$.  Let  
$k_r$ denote a kernel on $\R^d$ satisfying Assumption \ref{assumption:kernel}, and  $\H_{k_r}(\Omega)$ the RKHS of $k_r$ restricted on $\Omega$.
Then, for any $\{ (w_i,X_i) \}_{i=1}^n \in (\R \times \Omega)^n$, $f \in C_B^s (\Omega) \cap H^s (\Omega) \cap L_1(\Omega)$, and $\sigma > 0$, we have
\begin{eqnarray} 
 | P_n f - P f | 
&\leq&  c_1 \left( \sum_{i=1}^n |w_i| + 1 \right)  \sigma^{-s} \| f \|_{C_B^s(\Omega)}\nonumber\\
&&\qquad\qquad +  c_2 (1+\sigma^2)^{\frac{r-s}{2}}  e_n(P;\H_{k_r}(\Omega)) \| f \|_{H^s(\Omega)}, \label{eq:sob_weight_56}
\end{eqnarray}
where $c_1, c_2 > 0$ are constants independent of $\{ (w_i,X_i) \}_{i=1}^n$, $f$ and $\sigma$.
\end{theorem}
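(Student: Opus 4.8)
The plan is to approximate the rough integrand $f$ by a smooth, band-limited surrogate $f_\sigma$ that genuinely lies in $\H_{k_r}(\Omega)$, and then split the quadrature error into one piece controlled by the worst case error and two pieces controlled by the approximation quality of $f_\sigma$. Concretely, I would write
\[
P_n f - Pf = \underbrace{P_n(f - f_\sigma)}_{(\mathrm{I})} + \underbrace{(P_n - P)f_\sigma}_{(\mathrm{II})} + \underbrace{P(f_\sigma - f)}_{(\mathrm{III})},
\]
and bound the three pieces separately.

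First I would dispatch the two ``approximation'' terms by brute force. Since $P$ is a probability measure, $|(\mathrm{III})| \leq \int |f - f_\sigma|\,dP \leq \|f - f_\sigma\|_{L_\infty(\Omega)}$, and since $|(\mathrm{I})| = |\sum_i w_i(f(X_i) - f_\sigma(X_i))| \leq (\sum_i |w_i|)\,\|f - f_\sigma\|_{L_\infty(\Omega)}$. For term (II), because $f_\sigma \in \H_{k_r}(\Omega)$, the well-specified bound (\ref{eq:bounded_well_wce}) applies directly, giving $|(\mathrm{II})| \leq \|f_\sigma\|_{\H_{k_r}(\Omega)}\, e_n(P;\H_{k_r}(\Omega))$. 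Collecting these,
\[
|P_n f - Pf| \leq \Big(\sum_{i=1}^n |w_i| + 1\Big)\|f - f_\sigma\|_{L_\infty(\Omega)} + \|f_\sigma\|_{\H_{k_r}(\Omega)}\, e_n(P;\H_{k_r}(\Omega)).
\]
Matching this against (\ref{eq:sob_weight_56}) reduces the whole theorem to producing, for each $\sigma > 0$, a band-limited $f_\sigma$ satisfying $\|f - f_\sigma\|_{L_\infty(\Omega)} \leq c_1 \sigma^{-s}\|f\|_{C_B^s(\Omega)}$ and, via the norm-equivalence of $\H_{k_r}(\Omega)$ with $H^r(\Omega)$, $\|f_\sigma\|_{\H_{k_r}(\Omega)} \leq c_2(1+\sigma^2)^{(r-s)/2}\|f\|_{H^s(\Omega)}$.

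To build $f_\sigma$, I would first invoke Stein's extension theorem (available because $\partial\Omega$ is Lipschitz, cf.\ Definition \ref{def:Lipschitz_boundary}) to extend $f$ simultaneously to $Ef \in C_B^s(\R^d) \cap H^s(\R^d)$ with $\|Ef\|_{C_B^s(\R^d)} \lesssim \|f\|_{C_B^s(\Omega)}$ and $\|Ef\|_{H^s(\R^d)} \lesssim \|f\|_{H^s(\Omega)}$, and then apply the band-limited approximation scheme of \cite{NarWar04} to $Ef$ at bandwidth $\sigma$. This yields a band-limited $f_\sigma \in H^r(\R^d) = \H_{k_r}(\R^d)$ whose restriction lies in $\H_{k_r}(\Omega)$, the $L_1(\Omega)$ hypothesis being what makes the Fourier-side construction well-posed. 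The sup-norm estimate is the $C^s$-approximation property of this scheme (the $\sigma^{-s}$ rate being precisely the gain from $s$ bounded derivatives), while the RKHS-norm estimate follows from a Bernstein-type inequality: on the frequency support $\|\xi\| \leq \sigma$ one has $(1+\|\xi\|^2)^r \leq (1+\sigma^2)^{r-s}(1+\|\xi\|^2)^s$, so that $\|f_\sigma\|_{H^r(\R^d)}^2 \leq (1+\sigma^2)^{r-s}\|f_\sigma\|_{H^s(\R^d)}^2$, combined with the fact that the construction controls the $H^s$-norm by $\|Ef\|_{H^s(\R^d)}$.

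The main obstacle is this third step: obtaining, from a single construction, both the sup-norm approximation at rate $\sigma^{-s}$ and the controlled growth of the $H^r$-norm at rate $(1+\sigma^2)^{(r-s)/2}$, with constants $c_1,c_2$ independent of $\{(w_i,X_i)\}_{i=1}^n$, $f$, and $\sigma$. This is exactly where the Calder\'on-decomposition-based approximation theory of \cite{NarWar04} does the work, and I expect the bulk of the technical effort (and the explicit form of the constants) to live there; once $f_\sigma$ is in hand, the error splitting and the bounds on (I)--(III) are routine.
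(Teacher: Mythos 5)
Your proposal is correct and follows essentially the same route as the paper's proof: the same three-term decomposition, the same Stein extension of $f$ to $\R^d$ controlling both the $C_B^s$ and $H^s$ norms, the same Calder\'on-based band-limited approximant from \cite{NarWar04} giving the $\sigma^{-s}$ sup-norm bound, and the same Bernstein-type frequency-support argument (the paper's Lemma \ref{eq:gsigma_norm}) for the $(1+\sigma^2)^{(r-s)/2}$ growth of the $H^r$-norm. You have also correctly identified the role of the $L_1$ hypothesis in making the Fourier-side construction well-defined.
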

\begin{proof}
We first derive some inequalities used for proving the assertion. It follows from norm-equivalence that $f \in W_2^s(\Omega)$, where $W_2^s(\Omega)$ is the Sobolev space defined via weak derivatives.  
Since $\Omega$ has a Lipschitz boundary, Stein's extension theorem \cite[p.181]{Ste70} guarantees that there exists a bounded linear extension operator $\fE: W_2^s(\Omega) \to W_2^s(\R^d)$ such that 
\begin{eqnarray} 
\fE (f) (x) &=& f(x), \quad \forall x \in \Omega, \label{eq:extension_identity_54} \\
\| \fE (f) \|_{W_2^s(\R^d)} &\leq& C_1 \| f \|_{W_2^s(\Omega)}, \label{eq:norm_extension_55}
\end{eqnarray}
where $C_1$ is a constant independent of the choice of $f$.
From the norm-equivalence and (\ref{eq:norm_extension_55}), there is a constant $C_2$ such that 
\begin{equation}
\| \fE f \|_{H^s(\R^d)}
 \leq C_2 \| f \|_{H^s(\Omega)}. \label{eq:sobo_norm_92}
\end{equation}
Since $f \in L_1(\Omega)$, the extension also satisfies $\fE(f) \in L_1(\R^d)$ \cite[p.181]{Ste70}. In addition, by the construction of $\fE$ \cite[Eqs.(24)(31) on p.191]{Ste70}, one can show \citep[Section 3.2.2]{NarWarWen05} that $\fE$ is also a linear bounded operator from $C_B^s(\Omega)$ to $C_0^s(\R^d)$, that is,
\begin{eqnarray} 
\| \fE f \|_{C_0^s(\R^d)} &\leq& C_3 \| f \|_{C_B^s(\Omega)}, \label{eq:norm_extension_60}
\end{eqnarray}
for some constant $C_3>0$.  Below we write $\tf := \fE(f)$ for notational simplicity.

Let $g_{\sigma} \in H^r (\R^d)$ be the approximate function of $\tf$ defined as (\ref{eq:approx_g}) by Calder{\'o}n's formula (\ref{sec:sob_approx}; we set $f := \tf$). 
The property $\tf \in C_0^s(\R^d) \cap H^s(\R^d) \cap L_1(\R^d)$ enables the use of Proposition 3.7 of \cite{NarWar04} (where we set $k := s$ and $\alpha := 0$; see Proposition \ref{prop:NarWar04_Prop3_7} in \ref{sec:results_NarWar04} for a review), which gives in combination with \eqref{eq:norm_extension_60} that
\begin{equation}
\| \tf - g_{\sigma} \|_{L_\infty (\R^d)} 
\leq C \sigma^{-s} \| \tf \|_{C_0^s (\R^d)} 
\leq  C_4 \sigma^{-s} \| f \|_{C_B^s(\Omega)},  \label{eq:sob_sup}
\end{equation} 
for some constant $C_4>0$ which is independent of $f$. 

From $\tf \in C_0^s(\R^d) \cap H^s(\R^d) \cap L_1(\R^d)$, Lemma \ref{eq:gsigma_norm} in \ref{sec:sob_approx} can be applied, by which together with \eqref{eq:sobo_norm_92} we have 
\begin{equation}
\| g_{\sigma} \|_{H^r (\R^d)} 
\leq C_5' (1+\sigma^2)^{\frac{r-s}{2}} \| \tf \|_{H^s (\R^d)}\leq  C_5 (1+\sigma^2)^{\frac{r-s}{2}} \| f \|_{H^s (\Omega)}  \label{eq:sob_g_norm}
\end{equation}
for some constants $C_5$ and $C_5'$, which are independent of $\sigma$ and $\tf$. 

With the decomposition 
\begin{eqnarray*} 
| P_n f - Pf | \leq \underbrace{| P_n f - P_n g_{\sigma} |}_{(A)} +  \underbrace{| P_n g_{\sigma}  -  P g_{\sigma}|}_{(B)} +  \underbrace{| P g_{\sigma}  - P f |}_{(C)},
\end{eqnarray*}
each of the terms $(A)$, $(B)$ and $(C)$ will be bounded in the following. 

First, the term $(A)$ is bounded as
\begin{eqnarray*}
(A) 
&\leq &   \sum_{i=1}^n | w_i |  \left| f(X_i) - g_{\sigma}(X_i) \right| \\
&=& \sum_{i=1}^n |w_i| \left| \tf(X_i) - g_{\sigma}(X_i) \right| \quad (\because \{ X_i \}_{i=1}^n \subset \Omega \ {\rm and}\ (\ref{eq:extension_identity_54}) )\\
&\leq& \left( \sum_{i=1}^n |w_i| \right)  \| \tf - g_{\sigma} \|_{L_\infty (\R^d)} 
\stackrel{\eqref{eq:sob_sup}}{\leq} C_4  \left( \sum_{i=1}^n |w_i| \right)  \sigma^{-s} \| f \|_{C_B^s(\Omega)}.
\end{eqnarray*}

For the term $(B)$, it follows from the norm equivalence and restriction that for some constant $D$
\begin{equation} \label{eq:g_sigma_bound_130}
\| g_{\sigma}|_\Omega \|_{\H_{k_r}(\Omega)} \leq  D \| g_{\sigma} \|_{H^r(\R^d)}.
\end{equation}
This inequality and \eqref{eq:sob_g_norm} give 
\begin{eqnarray*}
(B) &\leq&  \left\|  g_{\sigma} |_{\Omega} \right\|_{ \H_{k_r} (\Omega) }   \left\|  m_{P_n} - m_P \right\|_{ \H_{k_r} (\Omega) }  \\
& \leq & D \| g_{\sigma} \|_{H^r(\R^d)}   e_n(P;\H_{k_r} (\Omega) )  \\ 
&\leq &  D C_5 (1+\sigma^2)^{\frac{r-s}{2}}  e_n(P;\H_{k_r} (\Omega) )  \| f \|_{H^s (\Omega)}.  
\end{eqnarray*}

Finally, the term $(C)$ is bounded as
\begin{equation*}
(C)
\leq \int \left| g_{\sigma}(x) - \tf(x) \right| dP(x) 
\leq \| g_{\sigma} - \tf \|_{L_\infty(\R^d)} 
\stackrel{(\ref{eq:sob_sup})}{\leq} C_4 \sigma^{-s} \| f \|_{C_B^s(\Omega)}.
\end{equation*}

Combining these three bounds, the assertion is obtained.  \qed
\end{proof}
\begin{remark} 
\begin{itemize}
\item The integrand $f$ is assumed to satisfy $f \in  H^s(\Omega) \cap C_B^s(\Omega) \cap L_1(\Omega)$, which is slightly stronger than just assuming $f \in H^s(\Omega)$.\vspace{-1mm}

\item In the upper-bound (\ref{eq:sob_weight_56}), the constant $\sigma > 0$ controls the tradeoff between the two terms: $c_2 (1+\sigma^2)^{\frac{r-s}{2}}  e_n(P;\H_{k_r}(\Omega)) \| f \|_{H^s(\Omega)}$ and $c_1 \left( \sum_{i=1}^n |w_i| + 1 \right) \cdot \sigma^{-s} \| f \|_{C_B^s(\Omega)}$. In the proof, the integrand $f$ is approximated by a band-limited function $g_\sigma \in H^r(\Omega)$, where $\sigma$ is the highest spectrum that $g_\sigma$ possesses.
Thus the tradeoff in the upper-bound corresponds to the tradeoff between the accuracy of approximation of $f$ by $g_\sigma$ and the penalty incurred on the regularity of $g_\sigma$.
%
\end{itemize}
\end{remark}

The following result, which is a corollary of Theorem \ref{theo:sob_weight}, provides a rate of convergence for the quadrature error in a misspecified setting.
It is derived by assuming certain rates for the quantity $\sum_{i=1}^n | w_i |$ and the worst case error $e_n(P;\H_{k_r})$.
\begin{corollary} \label{coro:rate_weight}
Let $\Omega$, $P$, $r$, $s$, $k_r$, and $\H_{k_r}(\Omega)$ be the same as Theorem \ref{theo:sob_weight}. 
Suppose that $\{ (w_i,X_i) \}_{i=1}^n\in (\R \times \Omega)^n$ satisfies $e_n(P;\H_{k_r}(\Omega)) = O(n^{-b})$ and  $\sum_{i=1}^n | w_i | = O(n^c)$ for some $b > 0$ and $c \geq 0$, respectively, as $n \to \infty$.
Then for any $f \in C_B^s (\Omega) \cap H^s (\Omega) \cap L_1(\Omega)$, we have
\begin{equation} \label{eq:rate_sobw}
| P_n f - P f | = O( n^{ - bs/r + c (r-s)/r  } ) \quad (n \to \infty).
\end{equation}
\end{corollary}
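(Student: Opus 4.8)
The plan is to invoke Theorem \ref{theo:sob_weight} and then optimize over the free parameter $\sigma > 0$ by tuning it as an explicit function of $n$. The integrand $f$ is fixed throughout, so $\| f \|_{C_B^s(\Omega)}$ and $\| f \|_{H^s(\Omega)}$ are finite constants and may be absorbed into the constants $c_1, c_2$. Substituting the two hypotheses $e_n(P;\H_{k_r}(\Omega)) = O(n^{-b})$ and $\sum_{i=1}^n |w_i| = O(n^c)$ into the bound \eqref{eq:sob_weight_56}, I would first reduce the right-hand side to the form
\[
| P_n f - P f | \leq C_1\, n^c\, \sigma^{-s} + C_2\, \sigma^{r-s}\, n^{-b},
\]
valid for every $\sigma \geq 1$, where $C_1, C_2 > 0$ depend on $f$ but not on $n$ or $\sigma$. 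Two small reductions are needed here: since $c \geq 0$ we have $n^c \geq 1$, so $\sum_{i=1}^n |w_i| + 1 = O(n^c)$; and since $r - s \geq 0$ and $\sigma \geq 1$, one has $(1+\sigma^2)^{(r-s)/2} \leq 2^{(r-s)/2} \sigma^{r-s}$, which handles the spectral penalty factor.

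The heart of the argument is then the balancing of the two terms. Setting $C_1 n^c \sigma^{-s}$ equal (up to constants) to $C_2 \sigma^{r-s} n^{-b}$ gives $\sigma^{r} = n^{b+c}$, so I would choose
\[
\sigma = \sigma(n) := n^{(b+c)/r}.
\]
Because $b > 0$ and $c \geq 0$, this $\sigma(n)$ satisfies $\sigma(n) \geq 1$ for all $n \geq 1$ and $\sigma(n) \to \infty$, so it is an admissible choice in the reduced bound above. Substituting it back, a direct exponent computation shows both terms scale identically: the first term gives exponent $c - s(b+c)/r = c(r-s)/r - bs/r$, and the second term gives exponent $(r-s)(b+c)/r - b = c(r-s)/r - bs/r$ as well. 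Hence both are $O\!\left(n^{-bs/r + c(r-s)/r}\right)$, and summing them yields the claimed rate \eqref{eq:rate_sobw}.

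I do not anticipate a genuine obstacle: this is a balancing (bias–variance-type) argument rather than a substantial new estimate, and all the analytic work was already carried out in Theorem \ref{theo:sob_weight}. The only points requiring care are bookkeeping: confirming the "$+1$" can be absorbed into $O(n^c)$, justifying the replacement of $(1+\sigma^2)^{(r-s)/2}$ by a constant multiple of $\sigma^{r-s}$ (legitimate precisely because $s \leq r$ and $\sigma \geq 1$), and verifying that $\sigma(n) = n^{(b+c)/r}$ is positive and eventually exceeds $1$ so that it lies in the admissible range. It is also worth noting the degenerate case $r = s$: there the spectral factor is simply $1$, the second term reduces to $O(n^{-b})$, and the formula $n^{-bs/r + c(r-s)/r}$ correctly collapses to $O(n^{-b})$, which provides a useful consistency check on the computation.
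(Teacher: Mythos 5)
Your proposal is correct and matches the paper's own proof: the paper likewise substitutes the two rate hypotheses into the bound of Theorem \ref{theo:sob_weight} with $\sigma_n = n^\theta$ and chooses $\theta = (b+c)/r$ to balance the resulting terms $O(n^{c-\theta s})$ and $O(n^{\theta(r-s)-b})$. Your explicit handling of the $+1$ absorption and of $(1+\sigma^2)^{(r-s)/2} = O(\sigma^{r-s})$ just spells out bookkeeping the paper leaves implicit.
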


\begin{proof}
Let $\sigma_n := n^\theta > 0$, where $\theta > 0$ will be determined later.  
Plugging $e_n(P;\H_{k_r}(\Omega)) = O(n^{-b})$ and $\sum_{i=1}^n | w_i | = O(n^c)$ to \eqref{eq:sob_weight_56} with $\sigma := \sigma_n$ leads 
\begin{equation*} 
| P_n f - P f| 
= O( n^{c - \theta s} ) + O( n^{ \theta(r-s) - b } ).
\end{equation*}
Setting $\theta = (b+c)/r$, which balances the two terms in the right hand side, completes the proof. \qed
\end{proof}
\begin{remark} 
\begin{itemize}
\item 
The exponent of the rate in (\ref{eq:rate_sobw}) consists of two terms: $-bs/r$ and $c(r-s)/r$.
The first term $-bs/r$ corresponds to a degraded rate from the original $O(n^{-b})$ by the factor of smoothness ratio $s/r$, while 
the second term $c(r-s)/r$ makes the rate slower.
The effect of the second term increases as the constant $c$ or the gap $(r-s)$ of misspecification becomes larger.\vspace{-1mm}

\item 
The obtained rate recovers $O(n^{-b})$ for $r=s$ (well-specified case) regardless of the value of $c$. 
\vspace{-1mm}

\item Consider the misspecified case $r>s$.
If $c>0$, the term $c(r-s)/r$ always makes the rate slower. 
It is thus better to have $c=0$, as in this case we have the rate $O(n^{-bs/r})$ in the misspecified setting.
The weights with $\max_{i=1,\dots,n} |w_i| = O(n^{-1})$, such as equal weights $w_i=1/n$, realize $c=0$. 
\vspace{-1mm}

\item 
As mentioned earlier, the minimax optimal rate for the worst case error in the Sobolev space $H^r(\Omega)$ with $\Omega$ being a cube in $\R^d$ and $P$ being the Lebesgue measure on $\Omega$ is $O(n^{-r/d})$ \cite[Proposition 1 in Section 1.3.12]{Nov88}.
If design points satisfy $b = r/d$ and $c = 0$ in this setting, Corollary \ref{coro:rate_weight} provides the rate $O(n^{-s/d})$ for $f \in H^s(\Omega) \cap C_B^s(\Omega) \cap L_1(\Omega)$.
This rate is the same as the minimax optimal rate for $H^s(\Omega)$, and hence implies some adaptivity to the order of differentiability. \vspace{-1mm}

\item 
The assumption $\sum_{i=1}^n |w_i| = O(n^c)$ can be also interpreted from a probabilistic viewpoint.
Assume that the observation involves noise, $Y_i := f(X_i) + \varepsilon_i\ (i=1,\dots,n)$,  where $\ve_i$ is independent noise with $\E[ \ve_i^2] = \sigma_{\rm noise}^2$ ($\sigma_{\rm noise} > 0$ is a constant) for $i=1,\dots,n$, and that $Y_i$ are used for numerical integration.  
The expected squared error is decomposed as 
\begin{eqnarray*}
 \E_{\varepsilon_1,\dots,\varepsilon_n} \left[ \left( \sum_{i=1}^n w_i Y_i - Pf \right)^2  \right] 
&=&   \E_{\varepsilon_1,\dots,\varepsilon_n} \left[ \left( P_n f - Pf + \sum_{i=1}^n w_i \ve_i \right)^2  \right] \nonumber \\
&=&  \left| P_n f - Pf \right|^2 + \sigma_{\rm noise}^2 \sum_{i=1}^n w_i^2. 
\end{eqnarray*}
In the last expression, the first term $\left| P_n f - Pf \right|^2$ is the squared error in the noiseless case, and the second term $\sigma_{\rm noise}^2 \sum_{i=1}^n w_i^2 $ is the error due to noise.
Since $\sum_{i=1}^n w_i^2 \leq ( \sum_{i=1}^n |w_i| )^2 = O(n^{2c})$, the error in the second term may be larger as $c$ increases.
Hence quadrature weights having smaller $c$ are preferable in terms of robustness to the existence of noise; this in turn makes the quadrature rule more robust to the misspecification of the degree of smoothness.  
\end{itemize}
\end{remark}

Theorem~\ref{theo:sob_weight} and Corollary \ref{coro:rate_weight} require a control on the absolute sum of the quadrature weights $\sum_{i=1}^n |w_i|$. 
This is possible with, for instance, equal-weight quadrature rules that seek for good design points.
However, the control of $\sum_{i=1}^n |w_i|$ could be difficult for quadrature rules that obtain the weights by optimization based on pre-fixed design points.
This includes the case of Bayesian quadrature that optimizes the weights without any constraint. 
To deal with such methods, in the next section we will develop theoretical guarantees that do not rely on the assumption on the quadrature weights, but on a certain assumption on the design points.

\subsection{Convergence rates under an assumption on design points}
\label{sec:upper_sep}

This subsection provides convergence guarantees in a misspecified settings under an assumption on the design points.  The assumption is described in terms of separation radius \eqref{eq:separation_radius}, which is (the half of) the minimum distance between distinct design points.  The separation radius of points $X^n := \{ X_1,\dots, X_n \} \subset \R^d$ is denoted by $q_{X^n}$.  
Note that if $X^n\subset \Omega$ for some $\Omega$, then the separation radius lower bounds the fill distance, i.e., $q_{X^n} \leq h_{X^n,\Omega}$.

Henceforth we will consider a bounded domain $\Omega$, and without loss of generality, we assume that it satisfies ${\rm diam}(\Omega) \leq 1$.
\begin{theorem} \label{theo:sob_sepa}
Let $\Omega \subset \R^d$ be an open bounded set with ${\rm diam}(\Omega) \leq 1$ such that the boundary is Lipschitz, $P$ be a probability distribution on $\R^d$ such that ${\rm supp}(P) \subset \Omega$, $r$ be a real number with $r > d/2$, and $s$ be a natural number with $s \leq r$.  Let $k_r$ denote a kernel on $\R^d$ satisfying Assumption \ref{assumption:kernel}, and $\H_{k_r}(\Omega)$ the RKHS of $k_r$ restricted on $\Omega$.
For any $\{ (w_i,X_i) \}_{i=1}^n \in (\R \times \Omega)^n$ and $f \in C_B^s (\Omega) \cap H^s (\Omega)$, we have
\begin{equation}
\left| P_n f - Pf \right| \leq C \max \left(  \| f \|_{C_B^s(\Omega)}, \| f \|_{H^s(\Omega)}  \right)  \left(  q_{X^n}^{- (r-s) } e_n(P; \H_{k_r}(\Omega)) + q_{X^n}^{s} \right), \label{eq:bound_sob_sepa_379}
\end{equation}
where $C > 0$ is a constant depending  neither on $\{ (w_i,X_i) \}_{i=1}^n$ nor on the choice of $f$, and $e_n(P; \H_{k_r}(\Omega))$ is the worst case error in $\H_{k_r}(\Omega)$ for $\{ (w_i, X_i) \}_{i=1}^n$.
\end{theorem}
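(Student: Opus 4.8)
The plan is to run the same band-limited approximation scheme as in the proof of Theorem~\ref{theo:sob_weight}, but to replace the control of $\sum_{i=1}^n|w_i|$ by a control of the design points through their separation radius $q_{X^n}$. First I would extend $f$ to $\tf=\fE(f)$ on $\R^d$ by Stein's operator, so that $\tf\in C_0^s(\R^d)\cap H^s(\R^d)$ with $\|\tf\|_{C_0^s(\R^d)}\le C\|f\|_{C_B^s(\Omega)}$ and $\|\tf\|_{H^s(\R^d)}\le C\|f\|_{H^s(\Omega)}$, and take the band-limited approximant $g_\sigma\in H^r(\R^d)$ of $\tf$ furnished by Calder\'on's formula, for which the Narcowich--Ward estimates give $\|\tf-g_\sigma\|_{L_\infty(\R^d)}\le C\sigma^{-s}\|f\|_{C_B^s(\Omega)}$ and $\|g_\sigma\|_{H^r(\R^d)}\le C(1+\sigma^2)^{(r-s)/2}\|f\|_{H^s(\Omega)}$, exactly as in the estimates \eqref{eq:sob_sup} and \eqref{eq:sob_g_norm}. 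The key new device is the choice $\sigma:=q_{X^n}^{-1}$ (legitimate since $q_{X^n}\le\mathrm{diam}(\Omega)/2\le1$), which turns the approximation factor $\sigma^{-s}$ into $q_{X^n}^{s}$ and the regularity factor $(1+\sigma^2)^{(r-s)/2}$ into a constant multiple of $q_{X^n}^{-(r-s)}$.

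Next I would use the decomposition $P_nf-Pf=(P_n-P)g_\sigma+(P_n-P)h$ with $h:=f-g_\sigma$. Since $g_\sigma|_\Omega\in\H_{k_r}(\Omega)$ and $\|g_\sigma|_\Omega\|_{\H_{k_r}(\Omega)}\le D\|g_\sigma\|_{H^r(\R^d)}$, the first term is bounded by $\|g_\sigma|_\Omega\|_{\H_{k_r}(\Omega)}\,e_n(P;\H_{k_r}(\Omega))\le Cq_{X^n}^{-(r-s)}e_n(P;\H_{k_r}(\Omega))\|f\|_{H^s(\Omega)}$, which is the first half of the claimed bound. In the second term, $|Ph|=|\int h\,dP|\le\|h\|_{L_\infty(\R^d)}=\|\tf-g_\sigma\|_{L_\infty(\R^d)}\le Cq_{X^n}^{s}\|f\|_{C_B^s(\Omega)}$ supplies the second half, so everything reduces to bounding the quadrature sum $P_nh=\sum_{i=1}^nw_i\,h(X_i)$ --- the exact place where Theorem~\ref{theo:sob_weight} spent $\sum_i|w_i|$, which is now unavailable.

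To handle $P_nh$ I would introduce the $\H_{k_r}$-interpolant $\bar h:=\sum_{j}\beta_j\,k_r(\cdot,X_j)$ of $h$ on $X^n$, defined by $G\bm\beta=(h(X_i))_{i=1}^n$ with $G=(k_r(X_i,X_j))_{i,j}$. Since $P_n$ only sees the values on $X^n$, one has $P_nh=P_n\bar h=\langle m_{P_n},\bar h\rangle_{\H_{k_r}(\Omega)}$, and I would split this as $\langle m_{P_n}-m_P,\bar h\rangle+\langle m_P,\bar h\rangle$. The first piece is at most $e_n(P;\H_{k_r}(\Omega))\,\|\bar h\|_{\H_{k_r}(\Omega)}$, and here the separation radius enters through the kernel matrix: combining the Narcowich--Ward eigenvalue lower bound $\lambda_{\min}(G)\ge c\,q_{X^n}^{2r-d}$ with the packing bound $n\le C q_{X^n}^{-d}$ (valid because $\mathrm{diam}(\Omega)\le1$) gives
\[
\|\bar h\|_{\H_{k_r}(\Omega)}^2=\bm\beta^\top G\bm\beta=\bm d^\top G^{-1}\bm d\le \lambda_{\min}(G)^{-1}\,n\,\|h\|_{L_\infty}^2\le C\,q_{X^n}^{-2(r-s)}\|f\|_{C_B^s(\Omega)}^2,
\]
so this piece is again of order $q_{X^n}^{-(r-s)}e_n(P;\H_{k_r}(\Omega))$.

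The main obstacle is the remaining term $\langle m_P,\bar h\rangle=\int\bar h\,dP$, the interpolant integrated against $P$. Because $X^n$ is only assumed to be separated, not space-filling, $\bar h$ can have $L_\infty$-norm as large as $q_{X^n}^{-(r-s)}\|h\|_{L_\infty}$, so this term cannot be controlled by $\|\bar h\|_{L_\infty}$ and must instead be tied back to the worst-case error. The device I would use is that $\bar h$ lies in the span of $\{k_r(\cdot,X_i)\}$, on which Bayesian quadrature is exact, so $\int\bar h\,dP$ coincides with the Bayesian-quadrature estimate $\sum_i w_i^{\mathrm{BQ}}h(X_i)$ of the \emph{small} function $h$; bounding this by a sampling/interpolation-error estimate of Narcowich--Ward, and using that the Bayesian-quadrature worst-case error never exceeds $e_n(P;\H_{k_r}(\Omega))$, should yield the $C\bigl(q_{X^n}^{s}+q_{X^n}^{-(r-s)}e_n\bigr)$ control. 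This measure-adaptive estimate is the delicate heart of the argument, since the naive Cauchy--Schwarz bound $|\langle m_P,\bar h\rangle|\le\|m_P\|\,\|\bar h\|$ only returns the useless $O(q_{X^n}^{-(r-s)})$ with no $e_n$ factor. Collecting the three contributions and replacing $\|f\|_{C_B^s(\Omega)}$ and $\|f\|_{H^s(\Omega)}$ by their maximum gives the bound \eqref{eq:bound_sob_sepa_379}.
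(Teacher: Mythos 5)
Your proposal sets up the right quantities but breaks down at the term you yourself flag as ``the delicate heart of the argument'', and the gap there is genuine. Having reduced everything to $P_n h$ with $h=f-g_\sigma$, you rewrite $P_n h = \langle m_{P_n}-m_P,\bar h\rangle + \langle m_P,\bar h\rangle$ via the kernel interpolant $\bar h$ of $h$; the last term equals $\sum_i w_i^{\mathrm{BQ}}h(X_i)$, i.e., the Bayesian-quadrature estimate of the residual $h$. But $h$ lies only in (roughly) $C_B^s\cap H^s$, not in $\H_{k_r}$, so the fact that the Bayesian-quadrature worst-case error never exceeds $e_n(P;\H_{k_r}(\Omega))$ gives you nothing for it: bounding $\sum_i w_i^{\mathrm{BQ}}h(X_i)-Ph$ is itself a misspecified-quadrature problem of exactly the type Theorem~\ref{theo:sob_sepa} is meant to solve (indeed, the paper's Theorem~\ref{theo:BQ_misspecified} obtains such a bound \emph{by invoking} Theorem~\ref{theo:sob_sepa}), so the argument is circular as it stands. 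Nor can a Narcowich--Ward sampling inequality rescue it: those control the interpolation error in terms of the \emph{fill distance}, which is not assumed here; under a separation-radius hypothesis alone the interpolant of a uniformly small function can be large away from the nodes, as you yourself note, and $\int\bar h\,dP$ is then not controllable by $q_{X^n}^{s}+q_{X^n}^{-(r-s)}e_n$ without further input.

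The idea you are missing is the one the paper uses: do not take the Calder\'on approximant $g_\sigma$ (which has no relation to the nodes) and then try to repair the quadrature-sum term afterwards; instead take the \emph{band-limited interpolant} $\tf_{\sigma_n}$ of the Stein extension $\tf$ furnished by Theorem 3.5 of Narcowich--Ward, with $\sigma_n\asymp q_{X^n}^{-1}$ (the separation radius enters precisely as the admissibility condition $\sigma\geq\sigma_0\asymp q_{X^n}^{-1}$ for that theorem). This single object satisfies $\tf_{\sigma_n}(X_i)=\tf(X_i)$ for all $i$, together with $\|\tf-\tf_{\sigma_n}\|_{L_\infty(\R^d)}=O(\sigma_n^{-s})$ and $\|\tf_{\sigma_n}\|_{\H_{k_r}(\R^d)}=O(\sigma_n^{r-s})$. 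In the decomposition $|P_nf-Pf|\leq|P_n\tf-P_n\tf_{\sigma_n}|+|P_n\tf_{\sigma_n}-P\tf_{\sigma_n}|+|P\tf_{\sigma_n}-P\tf|$ the first term is then \emph{identically zero} for arbitrary weights, the second is $O(q_{X^n}^{-(r-s)}e_n)$, and the third is $O(q_{X^n}^{s})$; the problematic term never arises. Your other pieces (the Stein extension, the choice $\sigma\asymp q_{X^n}^{-1}$, the RKHS-norm bound for a band-limited function, and the treatment of the two benign terms) are correct and coincide with the paper's, but without the interpolation property the proof cannot be closed.
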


\begin{proof}
By the same argument as the first part of the proof for Theorem \ref{theo:sob_weight}, there exists an extension of $f$ to $\tf\in W_2^s(\R^d)\cap C_0^s(\R^d)$ such that
\begin{eqnarray}
\tf(x) &=& f(x), \quad \forall x \in \Omega, \label{eq:extension_identity_268} \\
\| \tf \|_{H^s(\R^d)}  &\leq&  C_1 \| f \|_{H^s(\Omega)}, \label{eq:ext_sobnorm_291} \\
\| \tf \|_{C_0^s(\R^d)} &\leq& C_2 \| f \|_{C_B^s(\Omega)}, \label{eq:norm_extension_282}
\end{eqnarray}
for some positive constants $C_i$ ($i=1,2$). 
Note also that $f\in L^1(\Omega)$, since $f\in C^s_B(\Omega)$ and $\Omega$ is bounded.  This implies $\tf \in L_1(\R^d)$ \cite[p.181]{Ste70}.

From the above inequalities, there is a constant $C_3>0$ independent of the choice of $f$ such that 
\begin{equation}
\max \left( \| \tf \|_{C_0^s(\R^d)}, \| \tf \|_{H^s(\R^d)} \right) 
\leq  C_3  \max \left(  \| f \|_{C_B^s(\Omega)}, \| f \|_{H^s(\Omega)}  \right).  \label{eq:max_norm_bound_306}
\end{equation}

For notational simplicity, write 
\begin{equation}
\sigma_n := \frac{C_d}{q_{X^n}}\label{Eq:sign}
\end{equation}
where $C_d := 24(\frac{\sqrt{\pi}}{3} \Gamma(\frac{d+2}{2}))^{\frac{2}{d+1}}$ with $\Gamma$ being the Gamma function.
From Theorems \ref{theo:NarWar04_Th3_5}
and \ref{theo:NarWar04_Th3_10} in \ref{sec:results_NarWar04} (which are restatements of Theorems 3.5 and 3.10 of \citealp{NarWar04}), there exists a function $\tf_{\sigma_n} \in H^r (\R^d)$ such that
\begin{eqnarray}
 \tf_{\sigma_n}(X_i) &=& \tf(X_i),\quad (i=1,\dots,n), \label{eq:Sob_interp} \\
\| \tf - \tf_{\sigma_n} \|_{L_\infty(\R^d)} &\leq& C_{s,d} \sigma_n^{-s} \max( \| \tf \|_{C_0^s(\R^d)}, \| \tf \|_{H^s(\R^d)}), \label{eq:Sob_approx}
\end{eqnarray}
where $C_{s,d}$ is a constant depending only on $s$ and $d$.
Combining (\ref{eq:Sob_approx}) and (\ref{eq:max_norm_bound_306}) obtains
\begin{equation}
\| \tf - \tf_{\sigma_n} \|_{L_\infty(\R^d)} 
\leq C_4 \sigma_n^{-s}   \max \left(  \| f \|_{C_B^s(\Omega)}, \| f \|_{H^s(\Omega)}  \right), \label{eq:bound_unif_330}
\end{equation}
where $C_4 := C_{s,d} C_3$. 

From Assumption \ref{assumption:kernel} and $\tf \in C_B^s(\R^d) \cap H^s(\R^d) \cap L_1(\R^d)$, Lemma \ref{lemma:Sob_norm_interpolant_78} (see \ref{sec:results_NarWar04}) gives 
\begin{equation*}
\| \tf_{\sigma_n} \|_{ \H_{k_r} (\R^d) } \leq C_{s,d,k_r} \sigma_n^{r-s} \max( \| \tf \|_{C_0^s(\R^d)}, \| \tf \|_{H^s(\R^d)}),
\end{equation*}
where $C_{s,d,k_r}$ is a constant only depending on $r$, $s$, $d$, and $k_r$.
It follows from this inequality and (\ref{eq:max_norm_bound_306}) that 
\begin{equation}
\| \tf_{\sigma_n} \|_{ \H_{k_r} (\R^d) }  
\leq  C_5 \sigma_n^{r-s}  \max \left(  \| f \|_{C_B^s(\Omega)}, \| f \|_{H^s(\Omega)}  \right),   \label{eq:bound_RKHS_main344}
\end{equation}
where $C_5 :=  C_{s,d,k_r} C_3$.

We are now ready to prove the assertion. 
In the decomposition 
\begin{eqnarray*} 
| P_n f - P f | = | P_n \tf - P \tf | \leq \underbrace{| P_n \tf - P_n \tf_{\sigma_n} |}_{(A)} +  \underbrace{| P_n \tf_{\sigma_n}  -  P \tf_{\sigma_n}|}_{(B)} +  \underbrace{| P \tf_{\sigma_n}  - P \tf |}_{(C)},
\end{eqnarray*}
the term (A) is zero from \eqref{eq:Sob_interp}. With $\| \tf_{\sigma_n} |_\Omega \|_{\H_{k_r}(\Omega)} \leq \| \tf_{\sigma_n} \|_{\H_{k_r}(\R^d)}$ (\citep{Aronszajn1950}{}{, Section 5}), 
the term $(B)$ can be bounded as 
\begin{eqnarray*}
(B) &=&  \left| \sum_{i=1}^n w_i \tf_{\sigma_n}|_\Omega (X_i) - \int \tf_{\sigma_n}   |_\Omega (x) dP(x) \right|\\
&\leq&  \left| \left< \tf_{\sigma_n} |_{\Omega}, m_{P_n} - m_P \right>_{\H_{k_r} (\Omega)} \right| \quad  (\because \tf_{\sigma_n} |_\Omega \in \H_{k_r} (\Omega))\\
&\leq&  \left\|  \tf_{\sigma_n} |_\Omega \right\|_{\H_{k_r}  (\Omega)}   e_n(P; \H_{k_r}  (\Omega) ) \nonumber
\end{eqnarray*}
\begin{eqnarray*}
&\leq &  \left\|  \tf_{\sigma_n} \right\|_{ \H_{k_r}  (\R^d)} e_n(P; \H_{k_r}  (\Omega) )  \nonumber\\
&\stackrel{\eqref{eq:bound_RKHS_main344}}{\leq}&  C_5  \sigma_n^{r-s} \max \left(  \| f \|_{C_B^s(\Omega)}, \| f \|_{H^s(\Omega)}  \right) e_n(P; \H_{k_r}  (\Omega) ).
\end{eqnarray*}
The term $(C)$ is upper-bounded as
\begin{equation*}
(C) \leq  \| \tf_{\sigma_n} - \tf \|_{L_\infty(\R^d)}
\stackrel{\eqref{eq:Sob_approx}}{\leq} C_4 \sigma_n^{-s}   \max \left(  \| f \|_{C_B^s(\Omega)}, \| f \|_{H^s(\Omega)}  \right).
\end{equation*}
These bounds complete the proof. \qed 
\end{proof}

\begin{remark}
\begin{itemize}
\item
From $q_{X^n} \leq h_{X^n}$, the separation radius $q_{X^n}$ typically converges to zero as $n\to\infty$.  For the upper bound in (\ref{eq:bound_sob_sepa_379}), the factor $q_{X^n}^{-(r-s)}$ in the first term diverges to infinity as $n\to \infty$, while the second term goes to zero.  
Thus $q_{X^n}$ should decay to zero in an appropriate speed depending on the rate of $e_n(P;\H_{k_r}(\Omega))$, in order to make the quadrature error small in the misspecified setting. 
\vspace{-1mm}

\item
Note that as the gap between $r$ and $s$ becomes large, the effect of the separation radius becomes serious; this follows from the expression $q_{X^n}^{-(r-s)}$.

\end{itemize}
\end{remark}

Based on Theorem \ref{theo:sob_sepa}, we establish below a rate of convergence in a misspecified setting by assuming a certain rate of decay for the separation radius as the number of design points increases. 
\begin{corollary} \label{coro:sob_sepa}
Let $\Omega, P, r, s, k_r, \H_{k_r}(\Omega)$ be the same as in Theorem \ref{theo:sob_sepa}. 
Suppose $\{ (w_i,X_i) \}_{i=1}^n \in (\R \times \Omega)^n$ is design points such that $e_n(P;\H_{k_r}(\Omega)) = O(n^{-b})$ and $q_{X^n} = \Theta(n^{-a})$ for some $b>0$ and $a > 0$, respectively, as $n \to \infty$.
Then for any $f \in C_B^s (\Omega) \cap H^s (\Omega)$, we have
\begin{equation} \label{eq:rate_sep_gen}
| P_n f - P f | = O(n^{- \min( b - a(r-s), as)} ) \quad (n \to \infty).
\end{equation}
In particular, the rate in the right hand side is optimized when $a = b/r$, which gives 
\begin{equation} \label{eq:rate_sobs}
| P_n f - P f | = O(n^{-\frac{bs}{r}}) \quad (n \to \infty).
\end{equation}
\end{corollary}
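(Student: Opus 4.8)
The plan is to derive Corollary~\ref{coro:sob_sepa} as a direct consequence of Theorem~\ref{theo:sob_sepa} by substituting the assumed asymptotic rates into the bound \eqref{eq:bound_sob_sepa_379} and then optimizing. First I would fix an arbitrary $f \in C_B^s(\Omega) \cap H^s(\Omega)$; since $f$ is fixed, the factor $C\max(\|f\|_{C_B^s(\Omega)}, \|f\|_{H^s(\Omega)})$ in \eqref{eq:bound_sob_sepa_379} is a constant and can be absorbed into the big-$O$. This reduces the task to analyzing the asymptotic order of
\[
q_{X^n}^{-(r-s)}\, e_n(P;\H_{k_r}(\Omega)) + q_{X^n}^{s}.
\]

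Next I would substitute the hypotheses $e_n(P;\H_{k_r}(\Omega)) = O(n^{-b})$ and $q_{X^n} = \Theta(n^{-a})$. The two-sided bound $\Theta$ is what matters here: since $q_{X^n} = \Theta(n^{-a})$, we have $q_{X^n}^{-(r-s)} = \Theta(n^{a(r-s)})$ (using $r \ge s$ so the exponent is nonnegative, and the lower bound in $\Theta$ is needed to control the negative power $q_{X^n}^{-(r-s)}$), and $q_{X^n}^s = \Theta(n^{-as})$. Multiplying the first factor by the worst-case-error rate gives a first term of order $n^{a(r-s)} \cdot n^{-b} = n^{-(b - a(r-s))}$ and a second term of order $n^{-as}$. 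The overall bound is the larger of the two, i.e. the one with the slower decay, which yields the exponent $-\min(b - a(r-s),\, as)$ and establishes \eqref{eq:rate_sep_gen}.

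For the optimized rate \eqref{eq:rate_sobs}, I would treat the exponent $\min(b - a(r-s), as)$ as a function of the free parameter $a > 0$ and maximize it. The first argument $b - a(r-s)$ is decreasing in $a$ while the second argument $as$ is increasing in $a$, so the maximum of their minimum is attained where they are equal: setting $b - a(r-s) = as$ gives $a(r-s) + as = b$, hence $ar = b$, i.e. $a = b/r$. Plugging back yields the common value $as = (b/r)s = bs/r$, so the exponent becomes $-bs/r$, giving \eqref{eq:rate_sobs}.

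The proof is genuinely routine once Theorem~\ref{theo:sob_sepa} is in hand; there is no serious obstacle. The one point warranting care is the direction of the inequality hidden in the $\Theta$ assumption: a plain upper bound $q_{X^n} = O(n^{-a})$ would \emph{not} suffice, because the term $q_{X^n}^{-(r-s)}$ involves a negative power of the separation radius, so controlling it requires a matching \emph{lower} bound on $q_{X^n}$ of the form $q_{X^n} = \Omega(n^{-a})$ — precisely the content of $\Theta$. I would make sure this is stated explicitly so the reader sees why the two-sided rate is assumed rather than a one-sided one.
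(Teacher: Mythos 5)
Your proof is correct and follows essentially the same route as the paper's: substitute the assumed rates into the bound \eqref{eq:bound_sob_sepa_379}, read off the two exponents $-(b-a(r-s))$ and $-as$, and balance them at $a=b/r$. Your explicit remark that the two-sided $\Theta$ bound (rather than a one-sided $O$ bound) is needed to control the negative power $q_{X^n}^{-(r-s)}$ is a point the paper leaves implicit, but the argument is otherwise identical.
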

\begin{proof}
Plugging $e_n(P;\H_{k_r}(\Omega)) = O(n^{-b})$ and $q_{X^n} = \Theta(n^{-a})$ into \eqref{eq:bound_sob_sepa_379} yields 
\begin{eqnarray*}
\left| P_n f - Pf \right| 
&= & O(n^{a (r-s) -b }) + O(n^{-as})= O(n^{- \min( b - a(r-s), as)} ),
\end{eqnarray*}
which proves (\ref{eq:rate_sep_gen}).  The second assertion is obvious. \qed 
\end{proof}

\begin{remark} 
As stated in the assertion, the best rate for the bound is achieved when $a = b/r$.
The resulting rate in (\ref{eq:rate_sobs}) coincides with that of Corollary \ref{coro:rate_weight} (see (\ref{eq:rate_sobw})) with $c = 0$.
Therefore observations similar to those for Theorem \ref{theo:sob_weight} can be made with the rate in (\ref{eq:rate_sobs}).
\end{remark}

\section{Bayesian quadrature in misspecified settings}\label{sec:BQ_quasi}

To demonstrate the results of Section \ref{sec:sobolev}, a rate of convergence for Bayesian quadrature in misspecified settings is derived.
To this end, an upper-bound on the integration error of Bayesian quadrature is first provided, when the smoothness of an integrand is overestimated.
It is obtained by combining Theorem \ref{theo:sob_sepa} in Section \ref{sec:sobolev} and Proposition \ref{prop:BQ_fill} in Section \ref{sec:BQ_well}.

\begin{theorem} \label{theo:BQ_misspecified}
Let $\Omega \subset \R^d$ be a bounded open set with ${\rm diam}(\Omega) \leq 1$ such that an interior cone condition is satisfied and the boundary is Lipschitz, $P$ be a probability distribution on $\R^d$ with a bounded density function $p$ such that ${\rm supp}(P) \subset \Omega$, $r$ be a real number with $\lfloor r \rfloor > d/2$, and $s$ be a natural number with $s \leq r$.
Suppose that $k_r$ is a kernel on $\R^d$ satisfying Assumption \ref{assumption:kernel},  $X^n := \{X_1,\dots,X_n\} \subset \Omega$ is design points such that $G := (k_r(X_i,X_j))_{i,j=1}^n \in \R^{n \times n}$ is invertible, and $w_1,\dots,w_n$ are the Bayesian quadrature weights in (\ref{eq:BQ_weight}) based on $k_r$. 
Assume that there exist constants $c_q > 0$ and $\delta > 0$ independent of $X^n$, such that $1- s/r < \delta \leq 1$ and 
\begin{equation} \label{eq:BQ_bound_cond_31}
h_{X^n,\Omega} \leq c_q q_{X^n}^\delta.
\end{equation} 
Then there exist positive constants $C$ and $h_0$ independent of $X^n$, such that for any $f \in C_B^s (\Omega) \cap H^s (\Omega)$, we have
\begin{equation} \label{eq:Bq_rate_sepa}
\left| P_n f - Pf \right| \leq C  \max \left(  \| f \|_{C_B^s(\Omega)}, \| f \|_{H^s(\Omega)}  \right)  h_{X^n,\Omega}^{r - (r-s)/\delta},
\end{equation}
provided that $h_{X^n,\Omega} \leq h_0$.
\end{theorem}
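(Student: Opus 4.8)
The plan is to combine Theorem~\ref{theo:sob_sepa} with Proposition~\ref{prop:BQ_fill}, using the geometric condition~\eqref{eq:BQ_bound_cond_31} to convert everything into a bound expressed purely in terms of the fill distance $h_{X^n,\Omega}$. First I would invoke Proposition~\ref{prop:BQ_fill}: since $\Omega$ satisfies the interior cone condition and has a Lipschitz boundary, $P$ has a bounded density, and $w_1,\dots,w_n$ are the Bayesian quadrature weights~\eqref{eq:BQ_weight}, there exist constants $C>0$ and $h_0>0$ independent of $X^n$ such that
\[
e_n(P;\H_{k_r}(\Omega)) \leq C\, h_{X^n,\Omega}^{\,r}
\]
whenever $h_{X^n,\Omega}\leq h_0$. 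This gives a concrete decay rate for the worst case error appearing in Theorem~\ref{theo:sob_sepa}.

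Next I would apply Theorem~\ref{theo:sob_sepa}, whose hypotheses are all met here (note $\lfloor r\rfloor>d/2$ implies $r>d/2$, and $f\in C_B^s(\Omega)\cap H^s(\Omega)$), to obtain
\[
\left| P_n f - Pf \right| \leq C' \max\!\left( \| f \|_{C_B^s(\Omega)}, \| f \|_{H^s(\Omega)} \right)\!\left( q_{X^n}^{-(r-s)} e_n(P;\H_{k_r}(\Omega)) + q_{X^n}^{s} \right).
\]
Substituting the Proposition~\ref{prop:BQ_fill} bound into the first term yields a factor $q_{X^n}^{-(r-s)} h_{X^n,\Omega}^{\,r}$, so the bracket becomes (up to the common constant) $q_{X^n}^{-(r-s)} h_{X^n,\Omega}^{\,r} + q_{X^n}^{s}$. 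The task is now purely to bound this expression by $h_{X^n,\Omega}^{\,r-(r-s)/\delta}$.

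The key manipulation uses~\eqref{eq:BQ_bound_cond_31}, namely $h_{X^n,\Omega}\leq c_q\, q_{X^n}^{\delta}$, which rearranges to $q_{X^n} \geq (h_{X^n,\Omega}/c_q)^{1/\delta}$. Because the exponent $-(r-s)$ on $q_{X^n}$ in the first term is negative (as $r\geq s$), this lower bound on $q_{X^n}$ gives an \emph{upper} bound $q_{X^n}^{-(r-s)} \leq c_q^{(r-s)/\delta}\, h_{X^n,\Omega}^{-(r-s)/\delta}$, so the first term is at most $c_q^{(r-s)/\delta}\, h_{X^n,\Omega}^{\,r-(r-s)/\delta}$. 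For the second term $q_{X^n}^{s}$, I would use $q_{X^n}\leq h_{X^n,\Omega}$ (the separation radius lower-bounds the fill distance, as noted before Theorem~\ref{theo:sob_sepa}) together with $\mathrm{diam}(\Omega)\leq 1$, which forces $h_{X^n,\Omega}\leq 1$; one then checks that the exponent $s$ dominates $r-(r-s)/\delta$. Indeed, the inequality $s \geq r-(r-s)/\delta$ is equivalent to $(r-s)/\delta \geq r-s$, i.e.\ $\delta\leq 1$, which holds by hypothesis, while the condition $1-s/r<\delta$ guarantees the overall exponent $r-(r-s)/\delta$ is positive so the bound is meaningful. Since both terms are thereby dominated by a constant multiple of $h_{X^n,\Omega}^{\,r-(r-s)/\delta}$, absorbing all constants into a single $C$ and taking $h_0$ as in Proposition~\ref{prop:BQ_fill} completes the proof.

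The main obstacle I anticipate is the bookkeeping around the second term $q_{X^n}^{s}$: one must carefully exploit both $q_{X^n}\leq h_{X^n,\Omega}\leq 1$ and the exponent comparison $s\geq r-(r-s)/\delta$ to fold it into the target rate, and this comparison is exactly where the hypotheses $\delta\leq 1$ and $1-s/r<\delta$ enter. The first term is routine once~\eqref{eq:BQ_bound_cond_31} is inverted, so the delicate point is verifying that the slower of the two decaying terms is still no worse than $h_{X^n,\Omega}^{\,r-(r-s)/\delta}$, and confirming that the exponent is strictly positive so that the right-hand side genuinely tends to zero.
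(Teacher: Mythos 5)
Your proposal is correct and follows essentially the same route as the paper's own proof: apply Theorem~\ref{theo:sob_sepa}, substitute the bound $e_n(P;\H_{k_r}(\Omega))\leq C h_{X^n,\Omega}^{r}$ from Proposition~\ref{prop:BQ_fill}, invert condition~\eqref{eq:BQ_bound_cond_31} to control $q_{X^n}^{-(r-s)}$, and absorb the $q_{X^n}^{s}$ term via $q_{X^n}\leq h_{X^n,\Omega}\leq 1$ and the exponent comparison $s\geq r-(r-s)/\delta$ (equivalent to $\delta\leq 1$). Your identification of where $\delta\leq 1$ and $1-s/r<\delta$ enter matches the paper's argument exactly.
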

\begin{proof}
Under the assumptions, Theorem \ref{theo:sob_sepa} gives that
\begin{equation} \label{eq:Theorem1_BQ_48}
\left| P_n f - Pf \right| \leq C_1 \max \left(  \| f \|_{C_B^s(\Omega)}, \| f \|_{H^s(\Omega)}  \right)  \left(  q_{X^n}^{- (r-s) } e_n(P; \H_{k_r}(\Omega)) + q_{X^n}^{s} \right),
\end{equation}
where $C_1 > 0$ is a constant, and $e_n(P; \H_{k_r}(\Omega))$ is the worst case error of $\{ (w_i,X_i) \}_{i=1}^n$ in $\H_{k_r}(\Omega)$. On the other hand, Proposition \ref{prop:BQ_fill} implies that there exist constants $C_2 > 0$ and $h_0 > 0$ independent of the choice of $X^n$, such that 
\begin{equation} \label{eq:Propsition1_BQ_54}
e_n(P; \H_{k_r}(\Omega))  \leq C_2 h_{X^n,\Omega}^r,
\end{equation}
provided that $h_{X^n,\Omega} \leq h_0$. 
Note also that (\ref{eq:BQ_bound_cond_31}) implies that 
\begin{equation} \label{eq:q_x_bound_32}
q_{X^n}^{-1} \leq c_q^{1/\delta} h_{X^n, \Omega}^{-1/\delta}.
\end{equation}
From $q_{X^n} \leq h_{X^n,\Omega}$ and the above inequalities, it follows that 
\begin{eqnarray*}
\left| P_n f - Pf \right| 
&\stackrel{\eqref{eq:Theorem1_BQ_48}}{\leq}&  C_1 \max \left(  \| f \|_{C_B^s(\Omega)}, \| f \|_{H^s(\Omega)}  \right)  \left(  q_{X^n}^{- (r-s) } e_n(P; \H_{k_r}(\Omega)) + q_{X^n}^{s} \right)\\
&\stackrel{\eqref{eq:Propsition1_BQ_54}}{\leq}&  C_1 \max \left(  \| f \|_{C_B^s(\Omega)}, \| f \|_{H^s(\Omega)}  \right)  \left( C_2  q_{X^n}^{- (r-s) }  h_{X^n,\Omega}^r + q_{X^n}^{s} \right)\\
&\stackrel{\eqref{eq:q_x_bound_32}}{\leq}&  C_1 \max \left(  \| f \|_{C_B^s(\Omega)}, \| f \|_{H^s(\Omega)}  \right)  \left( C_2 c_q^{(r-s)/\delta} h_{X^n,\Omega}^{r- (r-s)/\delta} + q_{X^n}^{s} \right)  \\
&\stackrel{(\star)}{\leq}&  C_1 \max \left(  \| f \|_{C_B^s(\Omega)}, \| f \|_{H^s(\Omega)}  \right)  \left( C_2 c_q^{(r-s)/\delta} h_{X^n,\Omega}^{r- (r-s)/\delta} + h_{X^n}^{s} \right)  \\
&\stackrel{(\dagger)}{\leq}& C_3 \max \left(  \| f \|_{C_B^s(\Omega)}, \| f \|_{H^s(\Omega)}  \right) h_{X^n,\Omega}^{r- (r-s)/\delta},
\end{eqnarray*}
where $C_1$, $C_2$ and $C_3$ are positive constants independent of the choice of design points $X^n$, and we used $q_{X^n} \leq h_{X^n,\Omega}$ in $(\star)$, $ 0 < h_{X^n} \leq 1$ and $0 < r - (r-s)/\delta \leq s$ in $(\dagger)$.  \qed
\end{proof}

\begin{remark} \rm
\begin{itemize}
\item The condition (\ref{eq:BQ_bound_cond_31}) implies that
\begin{equation} \label{eq:BQ_bound_remark_50}
c' h_{X^n,\Omega}^{1/\delta} \leq q_{X^n} \leq h_{X^n,\Omega},
\end{equation}
where $c' := c_q^{-1/\delta}$ is independent of $X^n$.
This condition is stronger for a larger value of $\delta$, requiring that distinct design points should not be very close to each other.
Note that the lower-bound $1-s/r<\delta$ is necessary for the upper-bound of the error (\ref{eq:Bq_rate_sepa}) to have a positive exponent, while the upper-bound $\delta \leq 1$ follows from $q_{X^n} \leq h_{X^n,\Omega}$, which holds by definition.
The constraint $1-s/r<\delta$ and (\ref{eq:BQ_bound_remark_50}) thus imply that a stronger condition is required for $X^n$ as the degree of misspecification becomes more serious (i.e., as the ratio $s/r$ becomes smaller).\vspace{-1mm}

\item 
If the condition (\ref{eq:BQ_bound_cond_31}) is satisfied for $\delta = 1$, then the design points $X^n$ are called {\em quasi-uniform} \cite[Section 7.3]{SchWen06}. 
In this case, the bound in (\ref{eq:Bq_rate_sepa}) is  
\begin{equation} \label{eq:bound_quasi_131}
| P_n f - Pf | \leq C  \max \left(  \| f \|_{C_B^s(\Omega)}, \| f \|_{H^s(\Omega)}  \right)  h_{X,\Omega}^s.
\end{equation}
This is the same order of approximation as that of Proposition \ref{prop:BQ_fill} when $r = s$.
Proposition \ref{prop:BQ_fill} provides an error bound for Bayesian quadrature in a well-specified case, where one knows the degree of smoothness $s$ of the integrand. 
Therefore, (\ref{eq:bound_quasi_131}) suggests that, if the design points are quasi-uniform, then Bayesian quadrature can be adaptive to the (unknown) degree of the smoothness $s$ of the integrand $f$, even in a situation where one only knows its upper-bound $r \geq s$.

\end{itemize}
\end{remark}

We obtain the following as a corollary of Theorem \ref{theo:BQ_misspecified}. The proof is obvious, and omitted. 
\begin{corollary} \label{coro:BQ_misspecified_rate}
Let $\Omega, P, r, s, k_r, X^n, G$ and $w_i$ ($i=1,\ldots,n$) be the same as Theorem \ref{theo:BQ_misspecified}. 
Assume that there exist constants $c_q > 0$ and $\delta > 0$ independent of $X^n$, such that $1- s/r < \delta \leq 1$ and
\[
h_{X^n,\Omega} \leq c_q q_{X^n}^\delta, 
\]
and further $h_{X^n,\Omega} = O(n^{- \alpha})$ as $n \to \infty$ for some $0 < \alpha \leq 1/d$. 
Then for all $f \in C_B^s (\Omega) \cap H^s (\Omega)$, we have
\begin{equation} \label{eq:BQ_misspecified_rate_gen}
\left| P_n f - Pf \right| = O(n^{  - \alpha [ r - (r-s)/\delta ]  }) \quad (n \to \infty).
\end{equation}
In particular, the best possible rate in the right hand side is achieved when $\delta = 1$ and $\alpha = 1/d$, giving that
\begin{equation} \label{eq:BQ_misspecified_rate}
\left| P_n f - Pf \right| = O(n^{  - s/d }) \quad (n \to \infty).
\end{equation}
\end{corollary}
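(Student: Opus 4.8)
The plan is to derive Corollary~\ref{coro:BQ_misspecified_rate} directly from Theorem~\ref{theo:BQ_misspecified} by substituting the assumed polynomial decay rate of the fill distance into the bound \eqref{eq:Bq_rate_sepa}. Since Theorem~\ref{theo:BQ_misspecified} already provides the inequality
\[
\left| P_n f - Pf \right| \leq C \max\left( \| f \|_{C_B^s(\Omega)}, \| f \|_{H^s(\Omega)} \right) h_{X^n,\Omega}^{\,r - (r-s)/\delta},
\]
valid once $h_{X^n,\Omega} \leq h_0$, the essential work is purely bookkeeping: the heavy lifting (combining Theorem~\ref{theo:sob_sepa} with Proposition~\ref{prop:BQ_fill}, and managing the separation radius via the quasi-uniformity condition \eqref{eq:BQ_bound_cond_31}) has already been carried out there.

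First I would note that the fixed integrand $f \in C_B^s(\Omega) \cap H^s(\Omega)$ makes $\max(\| f \|_{C_B^s(\Omega)}, \| f \|_{H^s(\Omega)})$ a finite constant, which is absorbed into the $O(\cdot)$ notation. Next, because $h_{X^n,\Omega} = O(n^{-\alpha})$ with $\alpha > 0$, the fill distance tends to zero, so the side condition $h_{X^n,\Omega} \leq h_0$ of Theorem~\ref{theo:BQ_misspecified} is satisfied for all sufficiently large $n$; thus the bound applies eventually and does not affect the asymptotic rate. Substituting $h_{X^n,\Omega} = O(n^{-\alpha})$ into the exponent gives
\[
\left| P_n f - Pf \right| = O\!\left( n^{-\alpha\,[\, r - (r-s)/\delta\,]} \right),
\]
which is exactly \eqref{eq:BQ_misspecified_rate_gen}. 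The constraint $1 - s/r < \delta \leq 1$ guarantees the exponent $r - (r-s)/\delta$ is strictly positive (so the rate is a genuine decay), as already discussed in the remark following Theorem~\ref{theo:BQ_misspecified}.

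For the second assertion \eqref{eq:BQ_misspecified_rate}, I would maximize the exponent $\alpha[r - (r-s)/\delta]$ over the admissible ranges of the two free parameters. The bracket $r - (r-s)/\delta$ is increasing in $\delta$ (since $r - s \geq 0$), so it is largest at $\delta = 1$, where it equals $r - (r-s) = s$; and $\alpha$ is largest at its ceiling $\alpha = 1/d$. These choices are simultaneously feasible: $\delta = 1$ corresponds to quasi-uniform design points, and $\alpha = 1/d$ is the optimal fill-distance decay permitted by the hypothesis $0 < \alpha \leq 1/d$. Plugging in yields the exponent $(1/d)\cdot s = s/d$, giving $\left| P_n f - Pf \right| = O(n^{-s/d})$. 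Since this argument is a one-line substitution followed by an elementary optimization of a monotone expression, there is no genuine obstacle; the only point requiring minor care is confirming that the two optimal parameter choices are compatible with all the stated constraints (in particular that $\delta = 1$ still satisfies $1 - s/r < \delta \leq 1$, which holds because $s \leq r$), and this is why the proof can legitimately be stated as obvious and omitted.
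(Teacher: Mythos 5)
Your proof is correct and follows exactly the route the paper intends: apply the finite-sample bound of Theorem~\ref{theo:BQ_misspecified}, observe that the side condition $h_{X^n,\Omega} \leq h_0$ holds for all sufficiently large $n$ since $h_{X^n,\Omega} = O(n^{-\alpha}) \to 0$, substitute the rate into the exponent, and then optimize over $\delta$ and $\alpha$. Nothing is missing; the paper omits this proof precisely because it is the one-line substitution you describe.
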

\begin{remark} \rm
\begin{itemize}
\item 
The rate $O(n^{-s/d})$ in (\ref{eq:BQ_misspecified_rate}) matches the minimax optimal rate of deterministic quadrature rules for the worst case error in the Sobolev space $H^s(\Omega)$ with $\Omega$ being a cube \citep[Proposition 1 in Section 1.3.12]{Nov88}. 
Therefore, it is shown that the optimal rate may be achieved by Bayesian quadrature, even in the misspecified setting (under a slightly stronger assumption that $f \in H^s(\Omega) \cap C_B^s(\Omega)$).
In other words, Bayesian quadrature may achieve the optimal rate {\em adaptively}, without knowing the degree $s$ of smoothness of a test function: one just needs to know its upper bound $r \geq s$.\vspace{-1mm}

\item The main assumptions required for the optimal rate (\ref{eq:BQ_misspecified_rate}) are that (i) $h_{X^n,\Omega} = O(n^{-1/d})$ and that (ii) $h_{X^n,\Omega} \leq c_q q_{X^n}^\delta$ for $\delta = 1$.
Recall that (i) is the same assumption that is required for the optimal rate $O(n^{-r/d})$ in the well-specified setting $f \in H^r(\Omega)$ (Corollary \ref{coro:BQ_rate_well}). 
On the other hand, (ii) is the one required for the finite sample bound in Theorem \ref{theo:BQ_misspecified}. 
Both these assumptions are satisfied, for instance, if $X_1,\dots,X_n$ are grid points in $\Omega$. 
\end{itemize}
\end{remark}

\section{Simulation experiments} \label{sec:experiments}
We conducted simulation experiments to empirically assess the obtained theoretical results.
MATLAB code for reproducing the results is available at \verb|https://github.com/motonobuk/kernel-quadrature|.
We focus on Bayesian quadrature in these experiments.
\subsection{Problem setting} \label{sec:setting-simulation}

\paragraph{Domain, distribution and design points.}
The domain is $\Omega := [0,1] \subset \R$ and the measure of quadrature $P$ is the uniform distribution over $[0,1]$.
For design points, we consider the following two configurations:
\begin{itemize}
\item \textbf{Uniform}: $X^n = \{X_1,\dots,X_n\}$ are equally-spaced grid points in $[0,1]$ with $X_1 = 0$ and $X_n = 1$, that is, $X_i = (i-1) / (n-1)$ for $i = 1,\dots,n$. \vspace{-1mm}
\item \textbf{Non-uniform}: $X^n = \{X_1,\dots,X_n \}$ are non-equally spaced points in $[0,1]$, such that $X_i = (i-1)/(n-1)$ if $i$ is odd, and $X_i = X_{i-1} + (n-1)^{-2}$ if $i$ is even.
\end{itemize}
For the {\em uniform} design points, both the fill distance $h_{X^n,\Omega}$ and the separation radius $q_{X^n, \Omega}$ decay at the rate $O(n^{-1})$.
On the other hand, for the {\em non-uniform} points the separation radius decays at the rate $O(n^{-2})$, while the rate of the fill distance remains the same $O(n^{-1})$ as for the uniform points.
Using these two different sets of design points, we can observe the effect of the separation radius to the performance of kernel quadrature.\vspace{-3mm}

\paragraph{Kernels.}
As before, $r$ denotes the assumed degree of smoothness used for computing quadrature weights, and $s$ denotes the true smoothness of test integrands, both expressed in terms of Sobolev spaces.
As kernels of the corresponding Sobolev spaces, we used Wendland kernels \cite[Definition 9.11]{Wen05}, which are given as follows \cite[Corollary 9.14]{Wen05}. Define the following univariate functions:
\begin{eqnarray*}
\phi_{1,0}(t) &:=& (1-t)_+, \quad \phi_{1,1}(t) := (1-t)_+^3 ( 3 t + 1 ), \\
\phi_{1,2}(t) &:=& (1-t)_+^5 ( 24 t + 15 t + 3), \\
\phi_{1,3}(t) &:=& (1-t)_+^7 ( 315 t^3 + 285 t^2 + 105 t + 15), \quad t \geq 0,
\end{eqnarray*}
where $(x)_+ := \min(0,x)$.
The Wendland kernel $k_r$ whose RKHS is norm-equivalent to the Sobolev space $H^r ([0,1])$ of order $r\ (= 1,2,3,4)$ is then defined by
$k_r(x,y) := \phi_{d,r-1}( | x - y | / \delta )$ for $x, y \in [0,1]$ \citep[Theorem 10.35]{Wen05},
where $\delta$ is a scale parameter and we set it to be $0.1$.\vspace{-3mm}

\paragraph{Evaluation measure.}
For each pair of  $r\ (= 1,2,3,4)$ and $s\ (= 1,2,3,4)$, we first computed quadrature weights $w_1,\dots,w_n$ by minimizing the worst case error in $H^r([0,1])$, and then evaluated the quadrature rule $(w_i,X_i)_{i=1}^n$ by computing the worst case error in $H^s([0,1])$, that is, $\sup_{\| f \|_{H^s([0,1])} \leq 1} |P_n f- Pf|$.
More concretely, we computed the weights $w_1,\dots,w_n$ by the formula \eqref{eq:BQ_weight} for Bayesian quadrature using the kernel $k_r$, and then evaluated the worst case error \eqref{eq:wce_preliminary} by computing the square root of \eqref{eq:kmean_analytic} using the kernel $k_s$.
In this way, one can evaluate the performance of kernel quadrature under various settings.
For instance, the case $s < r$ is a situation where the true smoothness $s$ is smaller than the assumed one $r$, the misspecified setting we have dealt in the paper.

\subsection{Results}
The simulation results are shown in Figure \ref{fig:simulation-equi} (\textbf{Uniform} design points) and Figure \ref{fig:simulation-irreg} (\textbf{Non-uniform} design points).
In the figures, we also report the exponents in the empirical rates of the fill distance $h_{X^n,\Omega}$, the separation radius $q_{X^n}$ and the absolute sum of weights $\sum_{i=1}^n |w_i|$ in the top of each subfigure; see the captions of Figures \ref{fig:simulation-equi} and \ref{fig:simulation-irreg} for details. Based on these, we can draw the following observations.

\paragraph{Optimal rates in the well-specified case.}
In both Figures \ref{fig:simulation-equi} and \ref{fig:simulation-irreg}, the black solid lines are the worst case errors in the well specified case $s = r$.
The empirical convergence rates of these worst case errors are very close to the optimal rates derived in Section \ref{sec:BQ_well} (see Corollary \ref{coro:BQ_rate_well} and its remarks), confirming the theoretical results.
Proposition \ref{prop:BQ_fill} and Corollary \ref{coro:BQ_rate_well} also show that the worst case error in the well-specified case is determined by the fill distance and is independent of the separation radius.
The simulation results are consistent with this, since for both Figures \ref{fig:simulation-equi} and \ref{fig:simulation-irreg} the fill distance decays essentially at the rate $O(n^{-1})$, while the separation radius decays quicker for Figure \ref{fig:simulation-irreg} than for Figure \ref{fig:simulation-equi}.

\begin{figure}[th]
 \begin{center}
 \subfigure[$r=1$]{
 \includegraphics[clip,width=55mm]{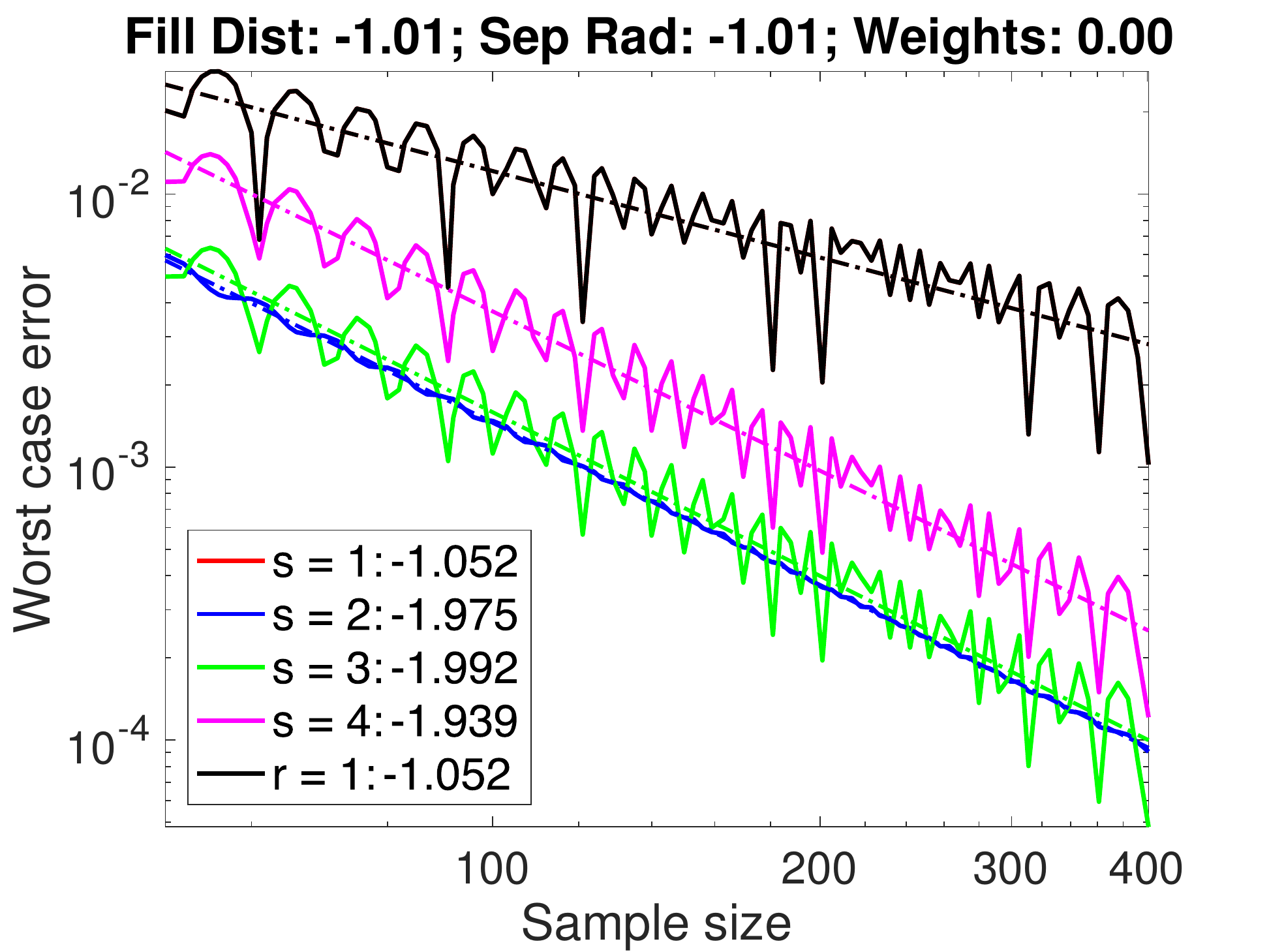}
 \label{fig:eq_r1}}
 \subfigure[$r=2$]{
  \includegraphics[clip,width=55mm]{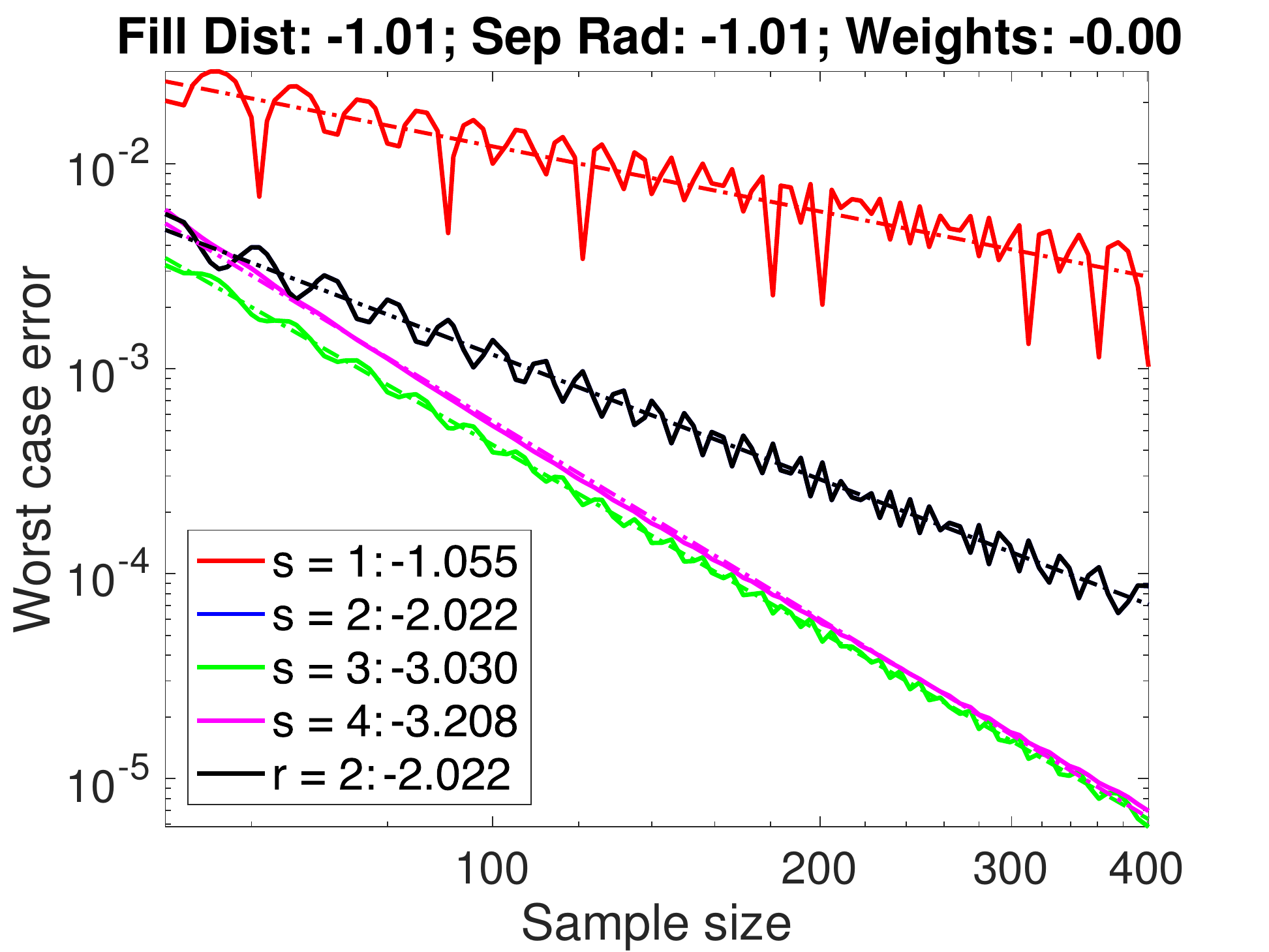}}
  \subfigure[$r=3$]{
    \includegraphics[clip,width=55mm]{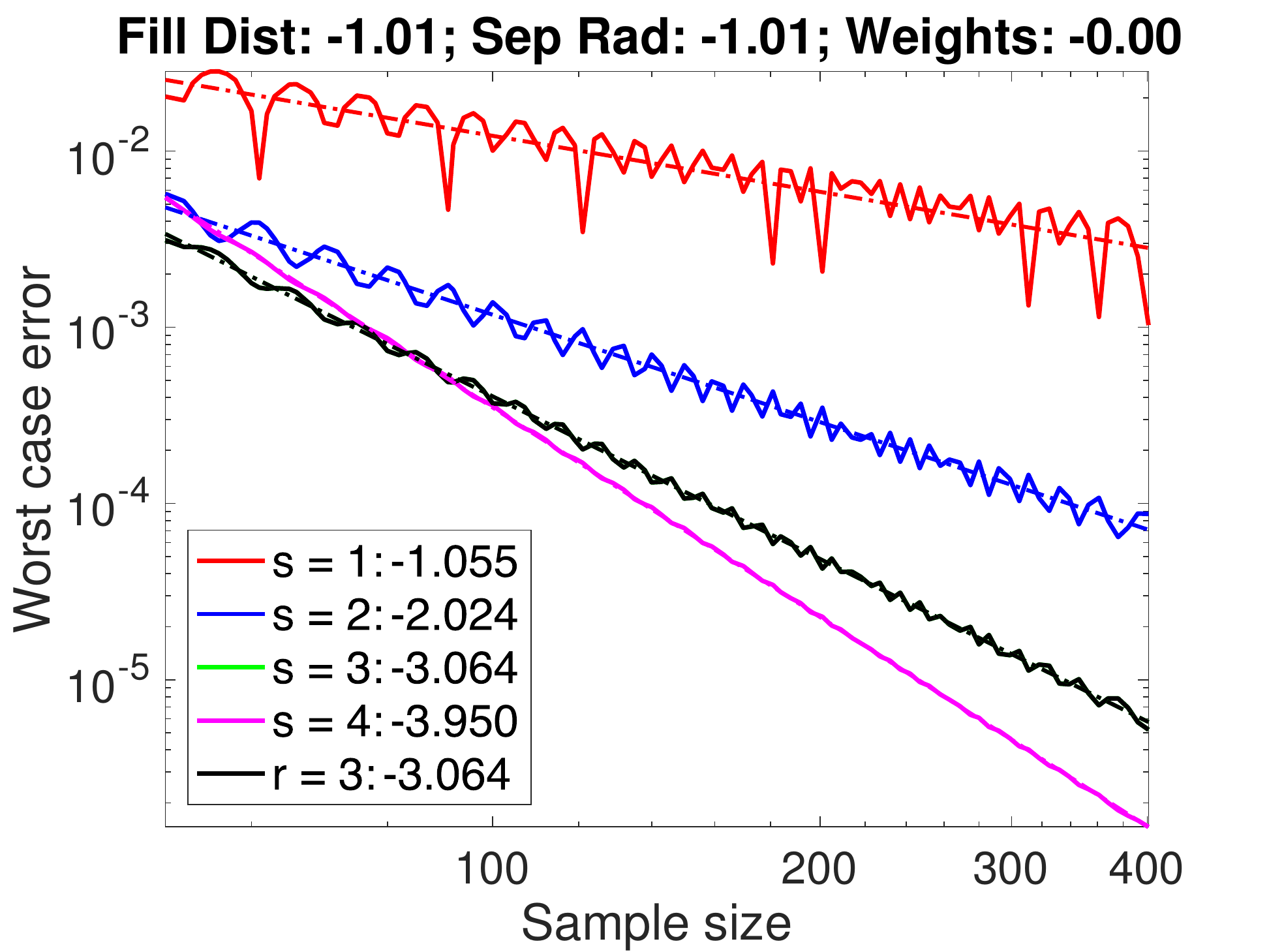}}
    \subfigure[$r=4$]{
      \includegraphics[clip,width=55mm]{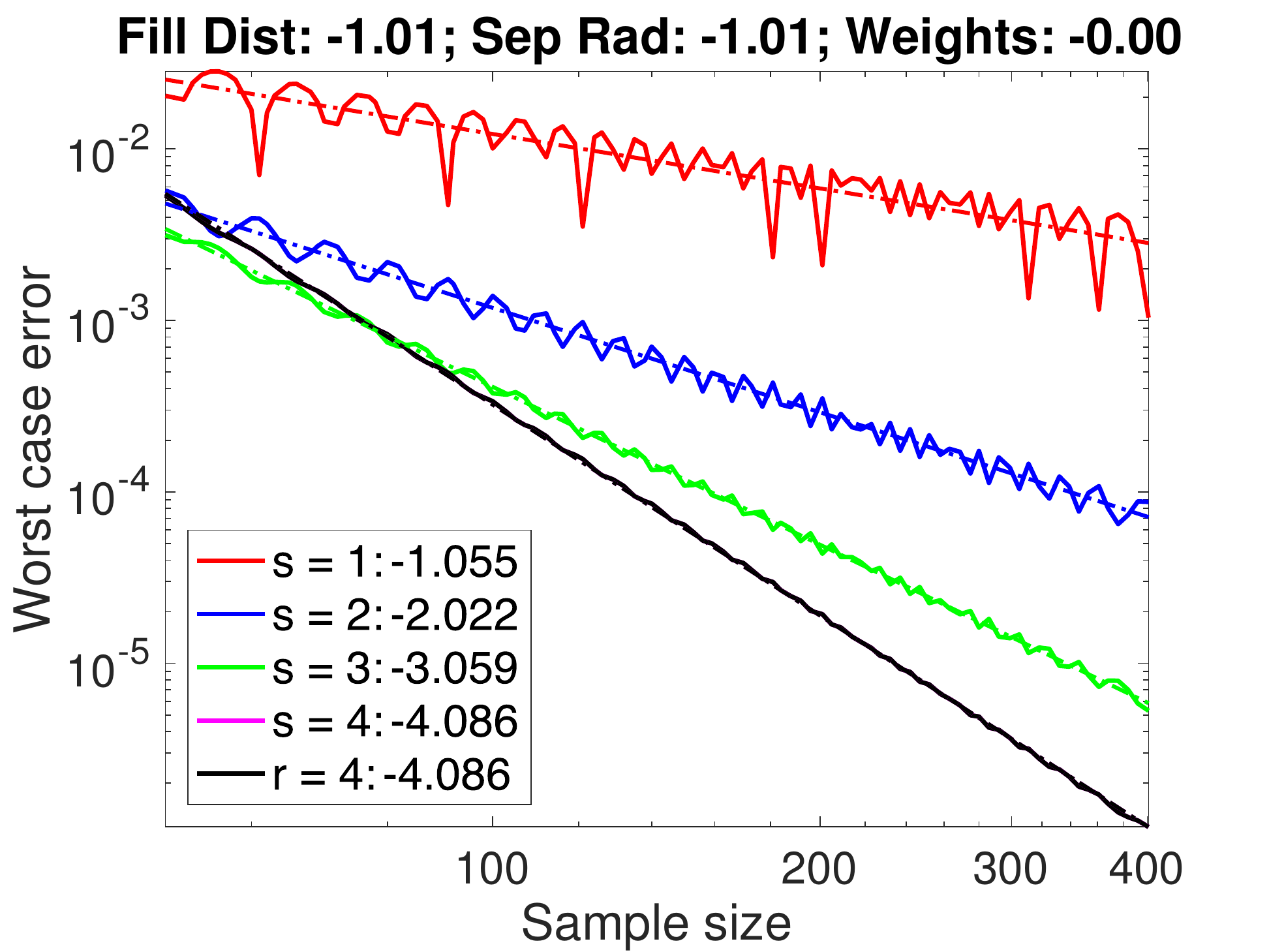}}
 \end{center}
\vspace{-4mm}
 \caption{
 Design points are \textbf{Uniform}, i.e., equally-spaced grid points in $[0,1]$; see Section \ref{sec:setting-simulation} for details.
 The solid lines are the worst case errors and the dotted lines are the corresponding linear fits.
 The subfigures (a)--(d) are respectively the results for the weights computed using the kernel $k_r$ with $r = 1, 2, 3, 4$.
 Black lines are the worst case errors for the well-specified case $s= r$ (i.e., the worst case error is evaluated in the same Sobolev space where the weights are obtained).
   Note that black lines overlap the corresponding lines for $s = r$ (e.g., in the subfigure (a) the red line for $s=1$ does not appear since the black line completely overlaps it).
 In each legend, we report the exponents of the empirical rates of the worst case errors.
 For instance, in the subfigure (d), the worst case error for $s = 1$ decays at the rate $O(n^{-1.055})$. 
 On the top of each figure, the exponents in the empirical rates of the fill distance $h_{X^n,\Omega}$, the separation radius $q_{X^n}$ and the absolute sum of weights $\sum_{i=1}^n |w_i|$ are shown. For instance, for the subfigure (d), we have $h_{X^n,\Omega} = O(n^{-1.01})$, $q_{X^n} = O(n^{-1.01})$ and $\sum_{i=1}^n |w_i| = O(n^{0.00})$. 
 }
 \label{fig:simulation-equi}
\end{figure}
\vspace{-4mm}
\paragraph{Adaptability to lesser smoothness.}
Let us look at Figure \ref{fig:simulation-equi} for the misspecified case $s < r$, i.e., where the true smoothness $s$ is smaller than the assumed one $r$.
For every pair of $s < r$, the rates are very close to the optimal ones, showing that adaptation to the unknown lesser smoothness in fact occurs.
This is consistent with Corollaries \ref{coro:sob_sepa} and \ref{coro:BQ_misspecified_rate}, which imply that adaptation occurs if the design points are quasi-uniform.
Figure \ref{fig:simulation-irreg} shows also some adaptability, but the rates for $s = 1$ with $r > s$ are slower than the optimal one. 
This will be discussed below, in a discussion on the effect of the separation radius.

\begin{figure}[th]
 \begin{center}
    \subfigure[$r=1$]{
    \includegraphics[clip,width=55mm]{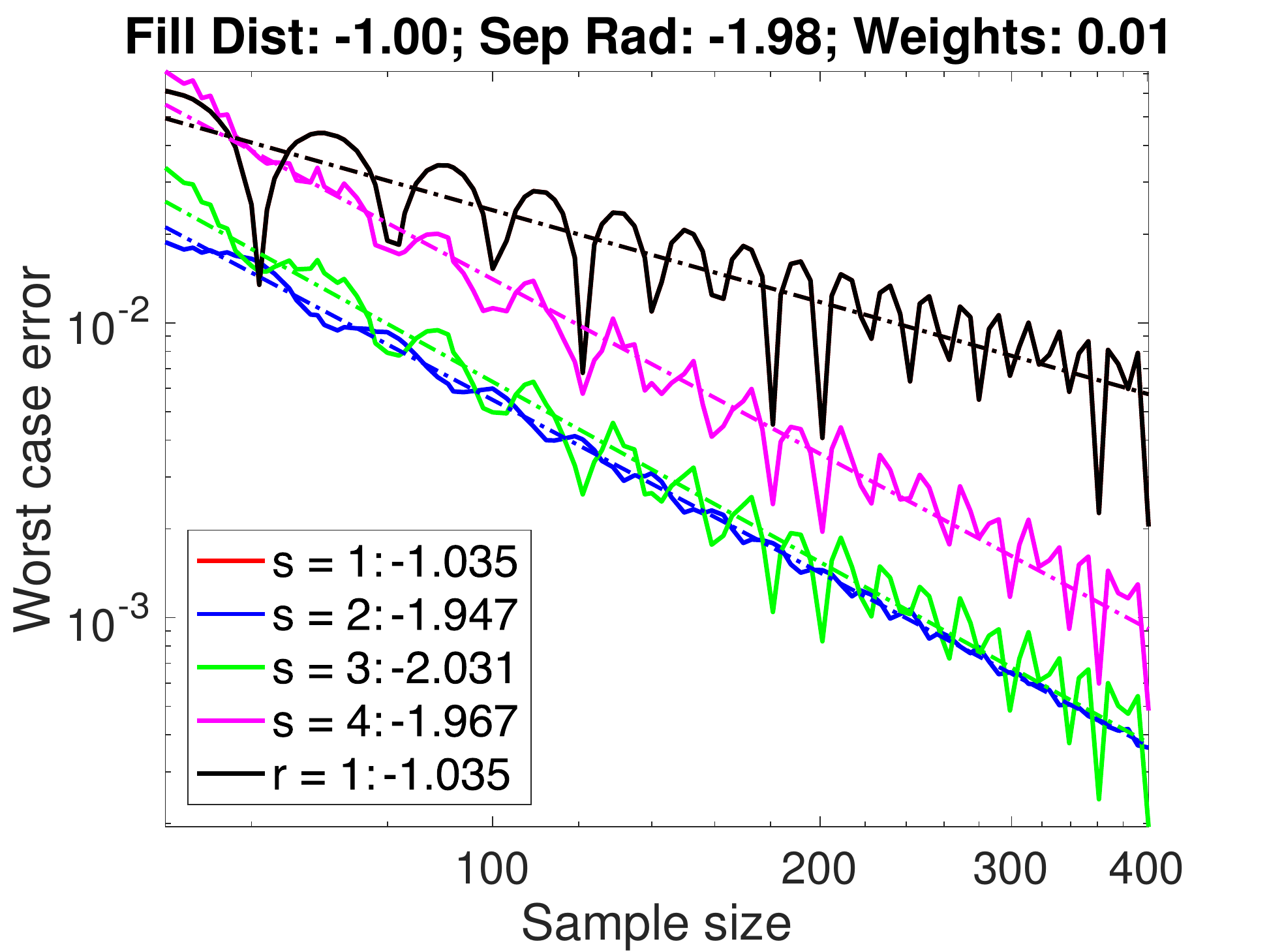}
    }
    \subfigure[$r=2$]{
    \includegraphics[clip,width=55mm]{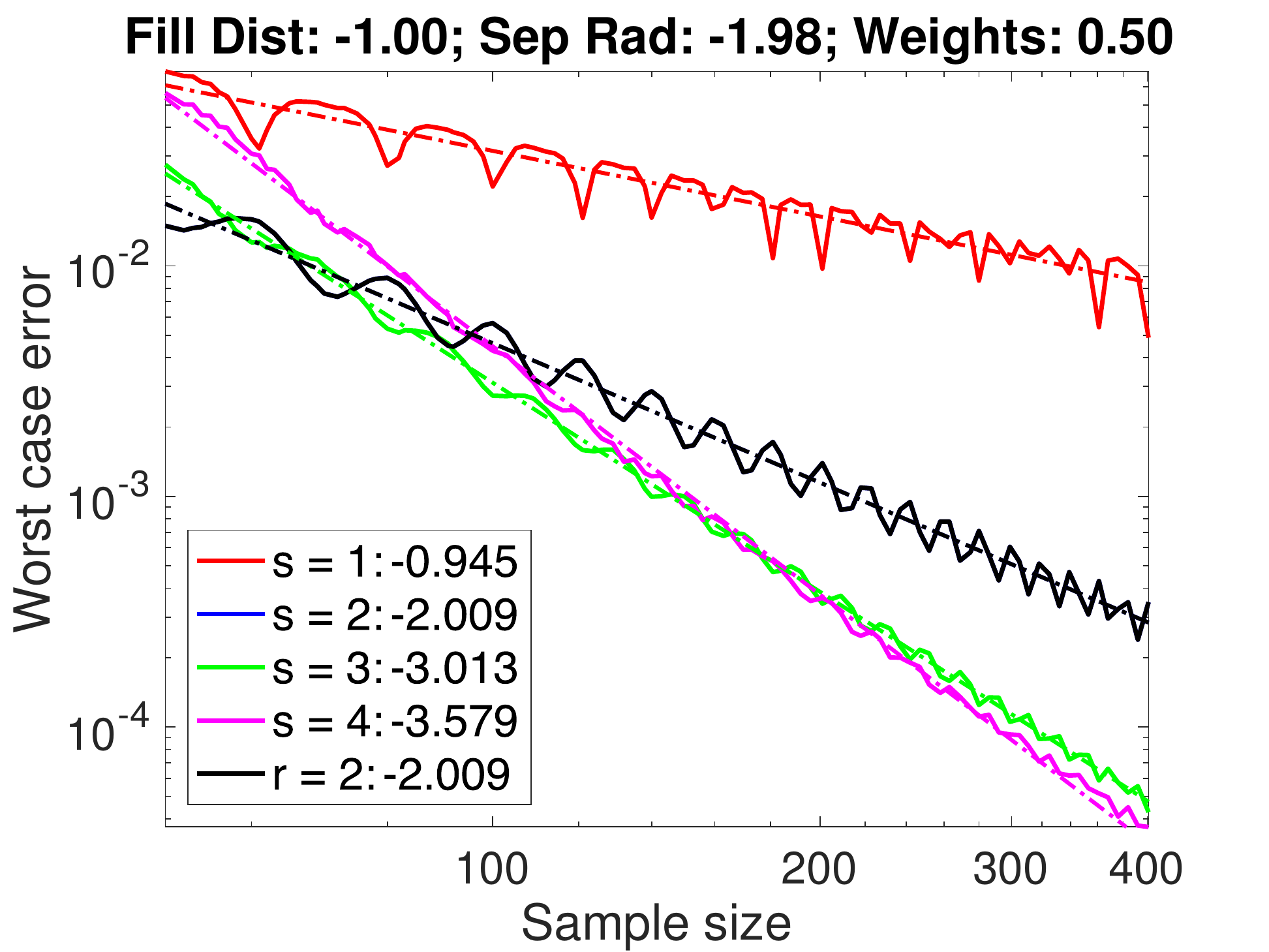}}
    \subfigure[$r=3$]{
     \includegraphics[clip,width=55mm]{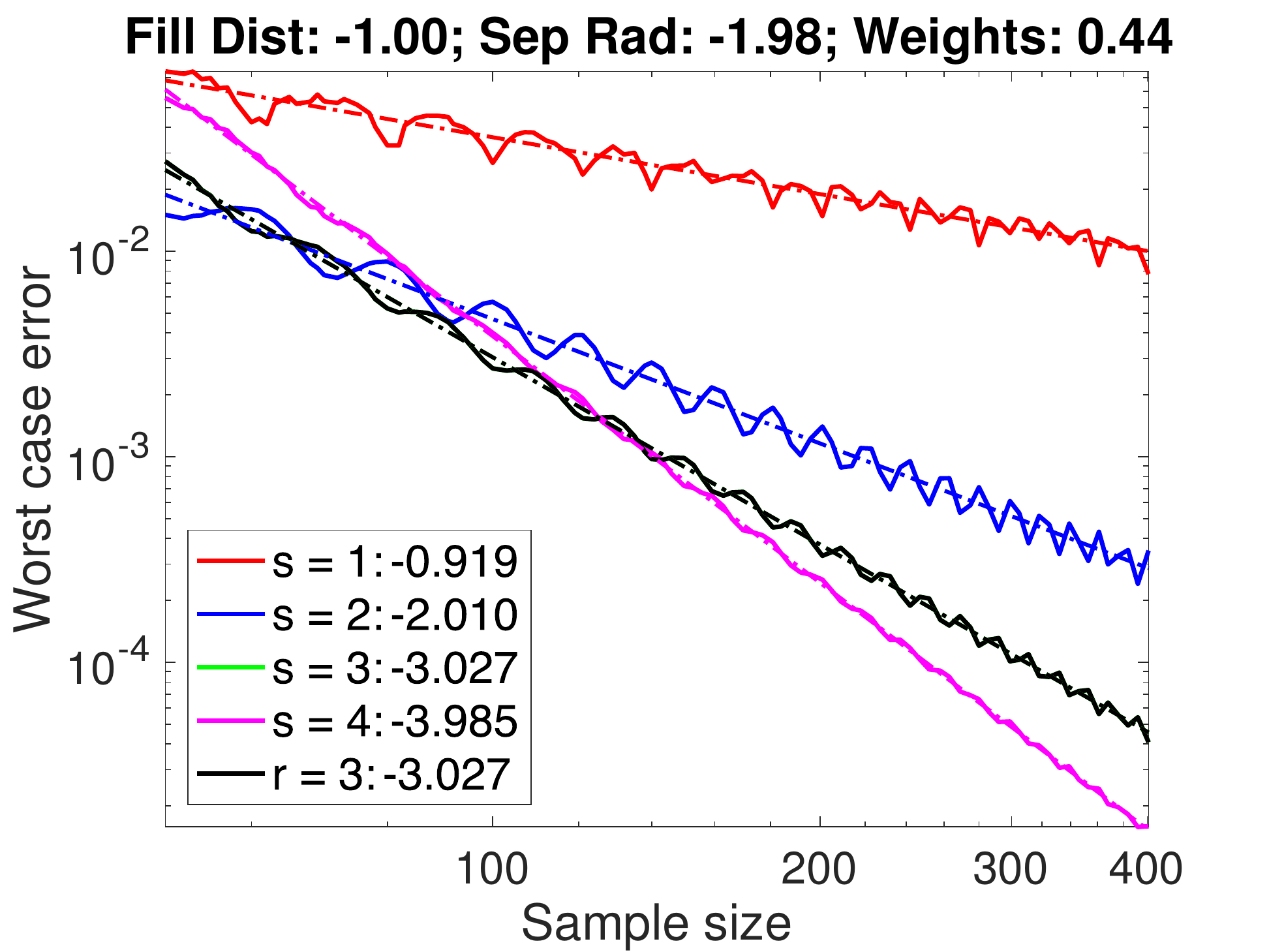}}
     \subfigure[$r=4$]{
    \includegraphics[clip,width=55mm]{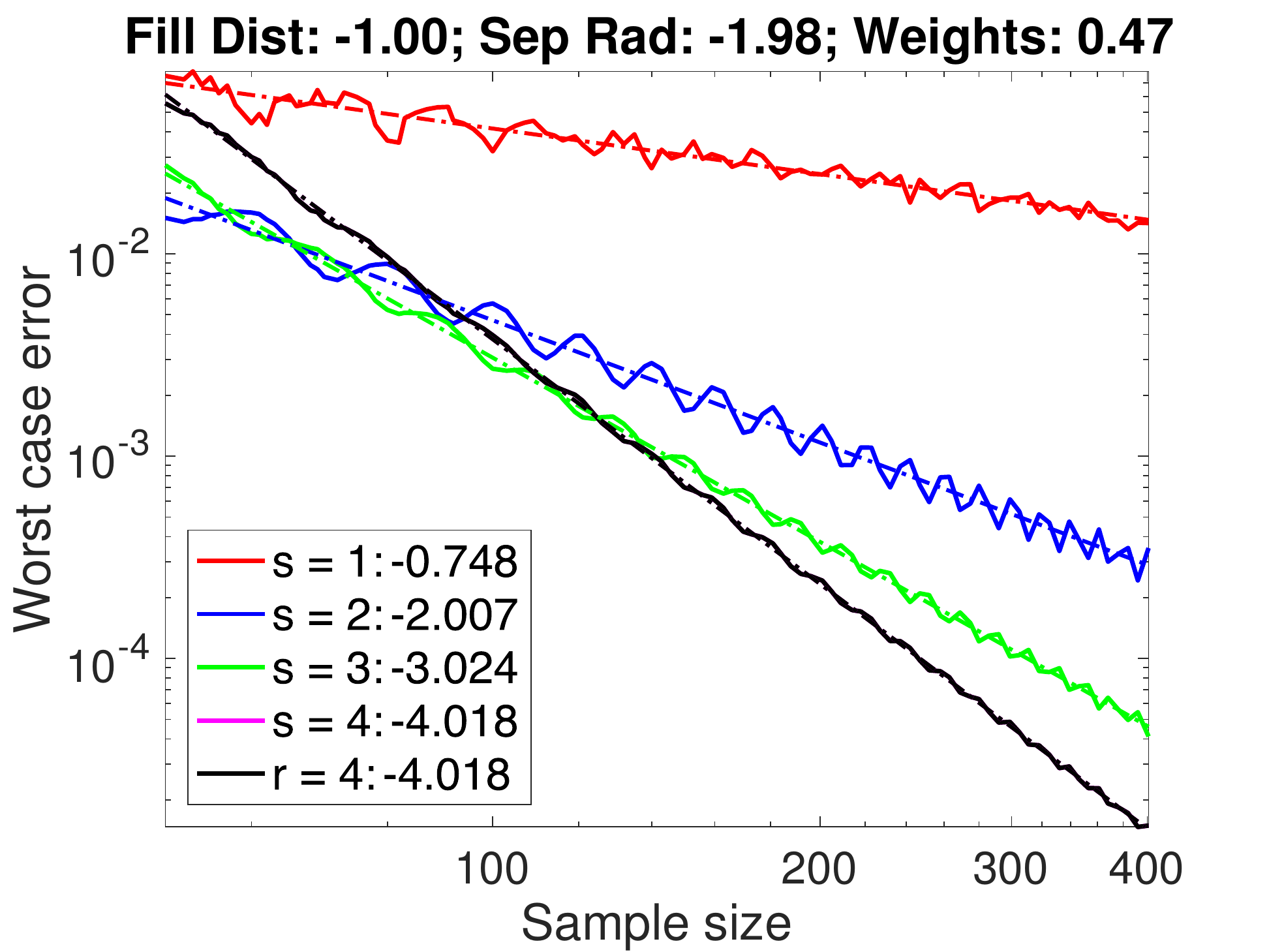}}
 \end{center}
\vspace{-4mm}
 \caption{
 Design points are \textbf{Non-uniform}, i.e., non-equally spaced points in $[0,1]$; see Section \ref{sec:setting-simulation} for details.
  The solid lines are the worst case errors and the dotted lines are the corresponding linear fits. 
   The subfigures (a)--(d) are respectively the results for the weights computed using the kernel $k_r$ with $r = 1, 2, 3, 4$.
   Black lines are the worst case errors for the well-specified case $s= r$ (i.e., the worst case error is evaluated in the same Sobolev space where the weights are obtained).
   Note that black lines overlap the corresponding lines for $s = r$ (e.g., in the subfigure (a) the red line for $s=1$ does not appear since the black line completely overlaps it).
   In each legend, we report the exponents of the empirical rates of the worst case errors.
 For instance, in the subfigure (d), the worst case error for $s = 1$ decays at the rate $O(n^{-0.748})$.
 On the top of each figure, the exponents in the empirical rates of the fill distance $h_{X^n,\Omega}$, the separation radius $q_{X^n}$ and the absolute sum of weights $\sum_{i=1}^n |w_i|$ are shown.
 For instance, for the subfigure (d), we have $h_{X^n,\Omega} = O(n^{-1.00})$, $q_{X^n} = O(n^{-1.98})$ and $\sum_{i=1}^n |w_i| = O(n^{0.47})$.
 }
 \label{fig:simulation-irreg}
\end{figure}
\vspace{-6mm}
\paragraph{Adaptability to greater smoothness.}
While the case $s > r$ is not covered by our theoretical analysis, Figures \ref{fig:simulation-equi} and \ref{fig:simulation-irreg} show some adaptation to the greater smoothness.
This phenomenon is also observed by Bach \cite[Section 5]{Bac17}, who showed (for quadrature weights obtained with {\em regularized} matrix inversion) that, if $2r \geq s > r$ then the optimal rate is still attainable in an adaptive way. 
Bach \cite[Section 6]{Bac17} verified this finding in experiments with quadrature weights {\em without} regularization. 
In our experiments, this phenomenon is observed for all cases of  $2 r \geq s > r$ expect for the case $r = 2$ and $s = 4$ in both Figures \ref{fig:simulation-equi} and \ref{fig:simulation-irreg}.
Note however that in \cite{Bac17}, design points are assumed to be {\em randomly} generated from a specific proposal distribution, so the results there are not directly applicable to deterministic quadrature rules.

\paragraph{The effect of the separation radius.}
In Figure \ref{fig:simulation-equi}, the rate for $s = 1$, that is $O(n^{-1.052})$, remains essentially the same for different values of $r = 1,2,3,4$.
This rate is essentially the optimal rate for $s = 1$, thus showing the adaptability of Bayesian quadrature to the unknown lesser smoothness (for $r = 2, 3, 4$).
On the other hand, in Figure \ref{fig:simulation-irreg} on non-uniform design points, the rate for $s = 1$ becomes slower as $r$  increases.
That is, the rates are $O(n^{-1.035})$ for $r = 1$ (the well-specified case), $O(n^{-0.945})$ for $r = 2$, $O(n^{-0.919})$ for $r = 3$ and $O(n^{-0.748})$ for $r = 4$.
This phenomenon may be attributed to the fact that the separation radius of the design points for Figure \ref{fig:simulation-irreg} decays faster than those for Figure \ref{fig:simulation-equi}. Corollary \ref{coro:BQ_misspecified_rate} shows that the rates in the misspecified case $s < r$ become slower as the separation radius decays more quickly and/or as the gap $r-s$ (or the degree of misspecification) increases, and this is consistent with the simulation results.
\vspace{-4mm}

\paragraph{The effect of the weights.}
While the sum of absolute weights $\sum_{i=1}^n |w_i|$ remains constant in Figure \ref{fig:simulation-equi}, this quantity increases in Figure \ref{fig:simulation-irreg}.
In the notation of Corollary \ref{coro:rate_weight}, $\sum_{i=1}^n |w_i| = O(n^c)$ with $c = 0$ for Figure \ref{fig:simulation-equi} while $c \approx 0.5$ for Figure \ref{fig:simulation-irreg} with $r = 2, 3, 4$.
Therefore the observation given in the preceding paragraph is also consistent with Corollary \ref{coro:rate_weight}, since it states that larger $c$ makes the rates slower in the misspecified case.
Note that the separation radius and the quantity $\sum_{i=1}^n |w_i|$ is intimately  related in the case of Bayesian quadrature, since the weights are computed from the inverse of the kernel matrix as \eqref{eq:BQ_weight} and thus affected by the smallest eigenvalue of the kernel matrix, while this smallest eigenvalue strongly depends on the separation radius and the smoothness of the kernel; see e.g., \cite{Sch95}  \cite[Section 12]{Wen05} and references therein.

\section{Discussion}
\label{sec:conclude}
In this paper, we have discussed the convergence properties of kernel quadrature rules with deterministic design points in misspecified settings.
In particular, we have focused on settings where quadrature weighted points are generated based on misspecified assumptions on the degree of smoothness, that is, the situation where the integrand is less smooth than assumed.

We have revealed conditions for quadrature rules under which adaptation to the unknown lesser degree of smoothness occurs. 
In particular we have shown that a kernel quadrature rule is adaptive if the sum of absolute weights remains constant, or if the spacing between design points is not too small (as measured by the separation radius).
Moreover, by focusing on Bayesian quadratures as working examples, we have shown that they can achieve minimax optimal rates of the unknown degree of smoothness, if the design points are quasi-uniform.
We expect that this result provides a practical guide for developing kernel quadratures that are robust to the misspecification of the degree of smoothness; such robustness is important in modern applications of quadrature methods, such as numerical integration in sophisticated Bayesian models, since they typically involve complicated or black box integrands and thus misspecification is likely to happen.\vspace{1mm}

There are several important topics to be investigated as part of future work.\vspace{-4mm}

\paragraph{Other RKHSs.}
This paper has dealt with Sobolev spaces as RKHSs of kernel quadrature.
However, there are many other important RKHSs of interest where similar investigation can be carried out. 
For instance, Gaussian RKHSs (i.e. the RKHSs of Gaussian kernels) have been widely used in the literature on Bayesian quadrature.
Such an RKHS consists of functions with infinite degree of smoothness.
This makes theoretical analysis challenging: our analysis relies on the approximation theory developed by Narcowich and Ward \cite{NarWar04}, which only applies to the standard Sobolev spaces.
Similarly, the theory of \cite{NarWar04} is also not applicable to Sobolev spaces with dominating mixed smoothness, which have been popular in the QMC literature. 
In order to analyze quadrature rules in these RKHSs, we therefore need to extend the approximation theory of \cite{NarWar04} to such spaces. 
Overall, this is an important but challenging theoretical problem.
(We also mention that relevant results are available in follow-up papers \cite{NarWarWen05,NarWarWen06}.
While these results do not directly provide the desired generalizations due to the same reasons mentioned above, these could still be potentially useful for our purpose.)\vspace{-4mm}

\paragraph{Sequential (adaptive) quadrature.}
Another important direction is the analysis for kernel quadratures that sequentially select design points. 
Such methods are also called {\em adaptive}, since the selection of the next point $X_{n+1}$ depends on the function values $f(X_1), \dots, f(X_n)$ of the already selected points $X_1,\dots,X_n$. 
Note that the adaptability here is different from that of the current paper where we used it in the context of adaptability of quadrature to unknown degree of smoothness.
For instance, the WSABI algorithm by \cite{gunter_sampling_2014} is an example of adaptive Bayesian quadrature which is considered as 
state-of-the-art for the application of Bayesian model evidence calculation.
Such adaptive methods have been known to be able to outperform non-adaptive methods in the following case: the hypothesis space is imbalanced or non-convex (see e.g.~Section 1 of \citealp{Nov16}).
In the worst case error, the hypothesis space is the unit ball in the RKHS $\H$, which is balanced and convex and so adaptation does {\em not} help. In fact, it is known that the optimal rate can be achieved without adaptation.
However, if the hypothesis space is imbalanced (i.e. $f$ being in the hypothesis space does {\em not} imply that $-f$ is in the hypothesis space), then adaptive methods may perform better.
For instance, the WSABI algorithm focuses on {\em non-negative} integrands, which means that the hypothesis is imbalanced and thus adaptive selection helps. Our analysis in this paper has focused on the worst case error defined by the unit ball in an RKHS, which is balanced and convex.
A future direction is thus to consider the setting of imbalanced or non-convex hypothesis spaces, such as the one consisting of non-negative functions, which will enable us to analyze the convergence behavior of sequential or adaptive Bayesian quadrature in misspecified settings.\vspace{-4mm}
\paragraph{Random design points.}
We have focused on {\em deterministic} quadrature rules in this paper.
In the literature, however, the use of {\em random} design points has also been popular.
For instance, the design points of Bayesian quadrature might be i.i.d.~with a certain proposal distribution or generated as an MCMC sequence.
Likewise, QMC methods usually apply randomization to deterministic design points.
Our forthcoming paper will deal with such situations and provide more general results than the current paper.

\section*{Acknowledgements}
MK and KF acknowledge support by MEXT Grant-in-Aid for Scientific Research on Innovative Areas (25120012). 
MK has also been supported in part by MEXT KAKENHI (17K12654) and the European Research Council (StG Project PANAMA).
BKS is partly supported by NSF-DMS-1713011. Most of this work was carried out when MK was a postdoc at the 
Institute of Statistical Mathematics, Tokyo.

\appendix

\def\thesection{Appendix \Alph{section}}

\section{Key results of Narcowich and Ward \cite{NarWar04}} \label{sec:results_NarWar04}
Here we review some key results from \cite{NarWar04}, which are needed in the proofs for our results.
One reason for including this is that a certain assumption about a function of interest, that is its integrability, is lacking in the results of  \cite{NarWar04}; see Remark \ref{rem:append-assumptions} for details. Therefore for the sake of completeness (as well as for the ease of the reader) we provide restatements of those results.

For $\sigma > 0$, below we denote by $\mathcal{B}_\sigma$ a subset of $L_2(\R^d)$ such that each $f \in \mathcal{B}_\sigma$ has a spectral density whose support is contained in the (closed) ball $B(0,\sigma)$ with radius $\sigma$, i.e.,
\[
\mathcal{B}_\sigma := \left\{ f \in L_2(\R^d):\ {\rm supp}(\hat{f}) \subset B(0,\sigma)  \right\}.
\]
This is a Paley-Weiner class of band-limited functions.
Thus the functions in $\mathcal{B}_\sigma$ are analytic (and thus they are continuous), and vanish at infinity. 
Therefore $\mathcal{B}_\sigma \subset L_2(\R^d) \cap C_0(\R^d)$.

The following theorem is a restatement of Theorem 3.5 of \cite{NarWar04}.

\begin{theorem} \label{theo:NarWar04_Th3_5}
Let $X^n := \{ X_1,\dots, X_n \} \subset  \R^d$ be $n$ distinct points with separation radius $q_{X^n} := \frac{1}{2} \min_{i \neq j} \| X_i - X_j \|$, such that ${\rm diam}(X^n) := \max_{i,j} \| X_i - X_j \| \leq 1$.
Let $\sigma > 0$ be a constant such that
\[
\sigma \geq \sigma_0 := \frac{24}{q_{X^n}} \left\{ \frac{\sqrt{\pi}}{3} \Gamma \left( \frac{d+2}{2} \right)  \right\}^{\frac{2}{d+2}}.
\]
Then for any $f \in C_0(\R^d) \cap L_2(\R^d)$, there exists $f_\sigma \in \mathcal{B}_\sigma$ that satisfies
\[
f(X_i)  =  f_\sigma(X_i), \quad i = 1,\dots,n,
\]
and 
$$\max\left( \| f - f_\sigma \|_{C_0(\R^d) },   \| f - f_\sigma \|_{L_2(\R^d)} \right)  
\leq C_d \inf_{g \in \mathcal{B}_\sigma} \max \left( \| f - g \|_{C_0(\R^d)}, \| f - g \|_{L_2(\R^d)} \right)$$
with $C_d:=5 + 2^{d+3}$.
\end{theorem}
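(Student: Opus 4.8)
The plan is to realize $f_\sigma$ as a near-best band-limited approximant of $f$ corrected by a band-limited interpolant of the sampling residual. First I would fix $g \in \mathcal{B}_\sigma$ with $\max(\|f-g\|_{C_0(\R^d)}, \|f-g\|_{L_2(\R^d)}) \le E + \varepsilon$, where $E := \inf_{h \in \mathcal{B}_\sigma}\max(\|f-h\|_{C_0(\R^d)},\|f-h\|_{L_2(\R^d)})$, and record the residuals $r_i := f(X_i) - g(X_i)$, which satisfy $\max_i |r_i| \le \|f-g\|_{C_0(\R^d)} \le E+\varepsilon$ since $f,g \in C_0(\R^d)$. Writing $f_\sigma = g + e$, the interpolation requirement $f_\sigma(X_i)=f(X_i)$ is equivalent to $e(X_i)=r_i$, so it suffices to produce $e \in \mathcal{B}_\sigma$ interpolating the residuals with $\max(\|e\|_{C_0(\R^d)},\|e\|_{L_2(\R^d)}) \le (C_d-1)(E+\varepsilon)$; the triangle inequality then yields the claimed bound after letting $\varepsilon \downarrow 0$, where non-attainment of the infimum is handled by extracting a locally uniform (weak-$\ast$) limit of the constructed functions, which lie in $\mathcal{B}_\sigma$ with uniformly bounded norms.

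To build the correction I would introduce a single localized band-limited kernel $\chi \in \mathcal{B}_\sigma$ with $\chi(0)=1$, normalized so that $\|\chi\|_{C_0(\R^d)}=1$ and $\|\chi\|_{L_2(\R^d)}^2 \asymp \sigma^{-d}$, chosen (e.g.\ as a suitable power of a Fej\'er-type kernel whose Fourier transform is a smooth bump supported in $B(0,\sigma)$) so that $|\chi(x)|$ decays rapidly in $\sigma\|x\|$. Seeking $e = \sum_{i=1}^n c_i\,\chi(\cdot - X_i)$, the interpolation conditions become the linear system $A c = r$ with $A := (\chi(X_i-X_j))_{i,j=1}^n$ and $r := (r_i)_i$. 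The role of the hypothesis $\sigma \ge \sigma_0$ is precisely to force $A$ to be strictly diagonally dominant: since distinct points are $2q_{X^n}$-separated and $\chi$ is localized at scale $\sigma^{-1} \lesssim q_{X^n}$, the threshold $\sigma_0 = \tfrac{24}{q_{X^n}}\{\tfrac{\sqrt{\pi}}{3}\Gamma(\tfrac{d+2}{2})\}^{2/(d+2)}$ is calibrated so that $\max_i \sum_{j\ne i}|\chi(X_i-X_j)| \le \theta$ for some $\theta<1$. Diagonal dominance then gives invertibility with $\|A^{-1}\|_{\ell_\infty\to\ell_\infty} \le (1-\theta)^{-1}$, hence $\|c\|_\infty \le (1-\theta)^{-1}\max_i|r_i|$, and $e \in \mathcal{B}_\sigma$ as a finite combination of translates of $\chi$.

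It remains to bound the two norms of $e$. For the uniform norm, the same decay-plus-separation estimate bounds the lattice sum $\sup_x \sum_{i}|\chi(x-X_i)|$ by a dimensional constant, so $\|e\|_{C_0(\R^d)} \le \|c\|_\infty \sup_x\sum_i|\chi(x-X_i)| \lesssim \max_i|r_i| \le E+\varepsilon$. For the $L_2$ norm I would use that the translates are essentially non-overlapping, so that the associated Gram matrix is diagonally dominant and $\|e\|_{L_2(\R^d)}^2 \lesssim \|\chi\|_{L_2(\R^d)}^2\sum_i|c_i|^2 \lesssim \sigma^{-d}\,n\,\|c\|_\infty^2$; then invoking the packing bound $n \lesssim (q_{X^n})^{-d} \lesssim \sigma^d$ (valid because the $X_i$ are $2q_{X^n}$-separated inside a set of diameter $\le 1$) cancels the factor $\sigma^{-d}$ and yields $\|e\|_{L_2(\R^d)} \lesssim \max_i|r_i| \le E+\varepsilon$. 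Collecting the two estimates gives constants whose careful bookkeeping produces exactly $C_d = 5 + 2^{d+3}$; for the present sketch I would only track that they are dimensional.

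The main obstacle is the second step: designing the exactly band-limited kernel $\chi$ together with the sharp decay estimate that makes $\sum_{j\ne i}|\chi(X_i-X_j)| < 1$ under the explicit threshold $\sigma \ge \sigma_0$, since localization and band-limitedness pull in opposite directions. The summation over the separated $X_j$ is where the $d$-dimensional ball-volume factor $\Gamma(\tfrac{d+2}{2})$ enters, and the same packing count underlies the $L_2$ estimate. Once diagonal dominance is secured with an explicit $\theta$, the remainder is routine triangle-inequality bookkeeping.
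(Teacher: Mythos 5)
You should first note that the paper itself contains no proof of this statement: Theorem \ref{theo:NarWar04_Th3_5} is explicitly a restatement of Theorem 3.5 of Narcowich and Ward (2004), imported so that the authors can apply it in Theorem \ref{theo:sob_sepa}. So there is no in-paper argument to compare against; the relevant benchmark is the proof in the cited source. Your architecture --- take a near-best approximant $g \in \mathcal{B}_\sigma$, set $f_\sigma = g + e$ where $e \in \mathcal{B}_\sigma$ interpolates the residuals $f(X_i)-g(X_i)$, bound $e$ by the norm of a band-limited interpolation operator, and conclude by the triangle inequality --- is exactly the strategy of the original proof, and your identification of the separation radius/band-length interplay as the mechanism behind the threshold $\sigma \geq \sigma_0$ is also correct. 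In that sense the plan is sound and not a different route.

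However, as written there is a genuine gap, and it sits exactly where the theorem's content lies. Everything in your argument is routine \emph{except} the existence of a kernel $\chi \in \mathcal{B}_\sigma$ with $\chi(0)=1$, $\|\chi\|_{C_0(\R^d)}=1$, $\|\chi\|_{L_2(\R^d)}^2 \asymp \sigma^{-d}$, and decay sharp enough that $\max_i \sum_{j\neq i}|\chi(X_i-X_j)| \leq \theta < 1$ whenever $\sigma q_{X^n} \geq 24\{\frac{\sqrt{\pi}}{3}\Gamma(\frac{d+2}{2})\}^{2/(d+2)}$. You flag this as "the main obstacle" and then do not construct it; but band-limitedness and localization are in genuine tension (a compactly Fourier-supported function cannot decay too fast), so the existence of such a $\chi$ with an \emph{explicit} admissible threshold is not a bookkeeping matter --- it is the theorem. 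This matters downstream because the paper uses the explicit value of $\sigma_0$ (via $\sigma_n = C_d/q_{X^n}$ in the proof of Theorem \ref{theo:sob_sepa}) and the explicit operator norm feeds into $C_d = 5+2^{d+3}$, neither of which your sketch recovers. Two secondary points also need tightening: the $L_2$ estimate $\|e\|_{L_2(\R^d)}^2 \lesssim \|\chi\|_{L_2(\R^d)}^2\sum_i|c_i|^2$ requires a quantitative off-diagonal bound on the Gram matrix of the translates (i.e.\ decay of the autocorrelation $\chi * \chi(-\cdot)$ at the separated points), which is a second decay estimate of the same kind as the first, not a consequence of "essential non-overlap"; and the limiting argument removing $\varepsilon$ should verify that the Fourier-support constraint and the interpolation conditions survive the weak/locally-uniform limit (they do, but it deserves a sentence). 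Until the kernel $\chi$ is actually exhibited with the stated quantitative properties, the proposal is a correct plan rather than a proof.
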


In the above theorem, $f_\sigma$ is an interpolant of $f$ on $X^n$.
Thus the theorem guarantees that such a $f_\sigma$ can be taken as a band-limited function with a sufficiently large band-length $\sigma$.
More precisely, the lower bound $\sigma_0$ for $\sigma$ is proportional to the reciprocal of the separation radius $q_{X^n}$. 
This means that the band-length $\sigma$ should increase as the minimum distance between distinct design points decreases. 

The following proposition is a restatement of Proposition 3.7 of \cite{NarWar04}, which establishes an upper-bound on the $L_1$-error for the approximate function defined in (\ref{eq:approx_g})---see \ref{sec:sob_approx}.
\begin{proposition} \label{prop:NarWar04_Prop3_7}
Let $s\in\N$ and $\alpha \in \N_0^{d}$ be a multi-index such that $| \alpha | < s$.
Suppose $f \in C_0^s(\R^d) \cap H^s(\R^d) \cap L_1(\R^d)$ and $g_\sigma$ is the approximate function defined in (\ref{eq:approx_g}).
Then for any $\sigma > 0$,
\[
\| \partial^\alpha f - \partial^\alpha g_\sigma \|_{L_\infty(\R^d)} \leq C_{s-| \alpha |} \sigma^{ |\alpha| - s } \| f \|_{C_0^s(\R^d)},
\]
where $C_{k-| \alpha |} > 0$ is a constant depending only on the value of $k-|\alpha|$ and the function $\psi$ of Lemma \ref{lemma:existence} in \ref{sec:appendix_lemma}. 
\end{proposition}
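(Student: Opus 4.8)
The plan is to use the fact that, by the Calder\'on reproducing formula underlying the definition (\ref{eq:approx_g}), the remainder $f - g_\sigma$ is exactly the high-frequency part of $f$, i.e.\ the portion of its Littlewood--Paley decomposition supported at frequencies of size $\gtrsim \sigma$. Writing $\psi_t(\cd) := t^{-d}\psi(\cd/t)$ for the $L_1$-normalized dilates of the function $\psi$ of Lemma \ref{lemma:existence}, whose Fourier transform is supported in an annulus bounded away from the origin, I expect $f-g_\sigma$ to admit a representation of the form
\[
f - g_\sigma = \int_0^{c/\sigma} f * \psi_t \, \frac{dt}{t}
\]
for a suitable constant $c>0$. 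The integrability hypothesis $f\in L_1(\R^d)$ --- which is the assumption added relative to \cite{NarWar04} --- is precisely what is needed to guarantee that this integral (and the companion one defining $g_\sigma$) converges absolutely and that $g_\sigma$ is genuinely band-limited to $B(0,\sigma)$.

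First I would differentiate under the integral sign and transfer the derivative onto $f$, which is legitimate since $f\in C_0^s(\R^d)$ and $|\alpha|<s$, giving
\[
\partial^\alpha(f - g_\sigma) = \int_0^{c/\sigma} (\partial^\alpha f) * \psi_t \, \frac{dt}{t}.
\]
The crux is then a scale-by-scale estimate. Because $\hat\psi$ vanishes near the origin, every moment $\int y^\beta \psi(y)\,dy$ vanishes, so I may Taylor-expand $\partial^\alpha f$ to order $s-|\alpha|-1$ about each base point, annihilate the polynomial part against the vanishing moments of $\psi_t$, and retain only the $(s-|\alpha|)$-th order remainder. Together with the rescaling identity $\int |y|^{s-|\alpha|}|\psi_t(y)|\,dy = t^{\,s-|\alpha|}\int |z|^{s-|\alpha|}|\psi(z)|\,dz$, this yields the uniform bound
\[
\left\| (\partial^\alpha f) * \psi_t \right\|_{L_\infty(\R^d)} \leq C\, t^{\,s-|\alpha|}\,\| f \|_{C_0^s(\R^d)}.
\]

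Finally I would substitute this estimate and integrate, using $s-|\alpha|>0$ to secure convergence at $t=0$:
\[
\left\| \partial^\alpha(f-g_\sigma) \right\|_{L_\infty(\R^d)} \leq C\,\| f \|_{C_0^s(\R^d)} \int_0^{c/\sigma} t^{\,s-|\alpha|-1}\,dt = \frac{C'}{s-|\alpha|}\,\sigma^{\,|\alpha|-s}\,\| f \|_{C_0^s(\R^d)},
\]
which is the claimed inequality with $C_{s-|\alpha|}:=C'/(s-|\alpha|)$, a constant depending only on $s-|\alpha|$ and on $\psi$. I expect the main obstacle to be the scale-wise estimate: one must confirm that the $\psi$ produced by Lemma \ref{lemma:existence} has the requisite frequency localization (hence vanishing moments) and carry out the Taylor-remainder bound \emph{uniformly} in the base point, which is exactly where the strength of $f\in C_0^s(\R^d)$ --- bounded, uniformly continuous derivatives up to order $s$ --- is consumed. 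A secondary subtlety is rigorously justifying the differentiation under the integral and the convergence of the Calder\'on representation, the step for which the extra hypothesis $f\in L_1(\R^d)$ is introduced.
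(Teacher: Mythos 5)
The paper does not actually prove this proposition; it is stated verbatim as a restatement of Proposition 3.7 of \cite{NarWar04} (with the hypothesis $f\in L_1(\R^d)$ added so that $g_\sigma$ is well defined), so the only meaningful comparison is with the standard argument behind that result --- and your outline is essentially that argument: write the error as the low-$t$ tail of Calder\'on's formula, use vanishing moments to kill the Taylor polynomial, and integrate the scale-wise bound $C t^{s-|\alpha|}$ over $dt/t$ up to $t=1/\sigma$. Two details in your write-up are, however, stated incorrectly and should be repaired. First, by (\ref{eq:approx_g}) and (\ref{eq:Calderon_formula}) the remainder is $f-g_\sigma=\int_0^{1/\sigma}(\psi_t*\psi_t*f)\,\frac{dt}{t}$, a \emph{double} convolution, not $\int_0^{c/\sigma}f*\psi_t\,\frac{dt}{t}$; since $\psi_t*\psi_t=(\psi*\psi)_t$ and $\widehat{\psi*\psi}=\hat{\psi}^2$ vanishes at the origin to at least twice the order of $\hat\psi$, the Taylor/moment argument goes through verbatim with $\Psi:=\psi*\psi$ in place of $\psi$, but you must say so. Second, the $\psi$ produced by Lemma \ref{lemma:existence} is \emph{not} frequency-localized to an annulus: $\hat\psi(\xi)=c\,\|\xi\|^{2m}\hat u(\xi)$ is supported in all of $B(0,1)$ and merely vanishes to finite order $2m>s$ at $\xi=0$, so the claim that ``every moment vanishes'' is too strong; what you actually have, and all you need, is property (d) of that lemma, namely $\int x^\beta\psi(x)\,dx=0$ for $|\beta|\le s$, which annihilates the Taylor polynomial of $\partial^\alpha f$ of degree $s-|\alpha|-1\le s$. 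With those corrections the scale-wise bound $\|(\partial^\alpha f)*\Psi_t\|_{L_\infty}\le C\,t^{\,s-|\alpha|}\|f\|_{C_0^s(\R^d)}$ and the final integration $\int_0^{1/\sigma}t^{\,s-|\alpha|-1}dt=\sigma^{|\alpha|-s}/(s-|\alpha|)$ are exactly right, and the absolute convergence of this majorant is also what upgrades the $L_2$-identity $f-g_\sigma=\int_0^{1/\sigma}\Psi_t*f\,\frac{dt}{t}$ to a pointwise one.
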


The following theorem, which is Theorem 3.10 in \cite{NarWar04}, provides an upper-bound on the approximation error of the interpolant $f_\sigma$.

\begin{theorem} \label{theo:NarWar04_Th3_10}
Let $s\in\N$ and $\alpha \in \N_0^{d}$ be a multi-index such that $| \alpha | < s$.
Suppose $f \in C_0^s(\R^d) \cap H^s(\R^d) \cap L_1(\R^d)$, $f_\sigma$ is the interpolant from Theorem \ref{theo:NarWar04_Th3_5} with $\sigma > 0$ and $X^n := \{X_1,\dots, X_n \}$ satisfies the conditions in Theorem \ref{theo:NarWar04_Th3_5}.
Then there is a constant $C_{|\alpha|, s, d}$ that depends only on $|\alpha|$, $s$ and $d$ such that 
\[
\left\| \partial^\alpha f -  \partial^\alpha f_\sigma \right\|_{L_\infty(\R^d)} \leq C_{|\alpha|, s, d} \sigma^{|\alpha| - s} \max\left( \| f \|_{C_0^s(\R^d)}, \| f \|_{H^s(\R^d)} \right).
\]
\end{theorem}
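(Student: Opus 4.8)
The plan is to compare the band-limited interpolant $f_\sigma$ supplied by Theorem \ref{theo:NarWar04_Th3_5} against the explicit band-limited approximant $g_\sigma$ of Proposition \ref{prop:NarWar04_Prop3_7}, and then to estimate the $\alpha$-th derivative of their difference by a Bernstein inequality for band-limited functions. The hypothesis $f \in C_0^s(\R^d) \cap H^s(\R^d) \cap L_1(\R^d)$ is precisely what is needed to invoke both of these ingredients simultaneously.

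First I would establish the zeroth-order interpolation error. Since $f_\sigma$ is a near-best approximant by Theorem \ref{theo:NarWar04_Th3_5}, taking the competitor $g = g_\sigma$ gives
\[
\max\!\left( \|f - f_\sigma\|_{C_0(\R^d)}, \|f - f_\sigma\|_{L_2(\R^d)} \right) \leq C_d \max\!\left( \|f - g_\sigma\|_{C_0(\R^d)}, \|f - g_\sigma\|_{L_2(\R^d)} \right).
\]
The $C_0$ term on the right is controlled by Proposition \ref{prop:NarWar04_Prop3_7} with $\alpha = 0$, namely $\|f - g_\sigma\|_{C_0(\R^d)} \leq C \sigma^{-s}\|f\|_{C_0^s(\R^d)}$. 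The $L_2$ term I would handle on the Fourier side: because $g_\sigma$ reproduces the low-frequency content of $f$ up to band $\sigma$, the difference $f - g_\sigma$ has spectrum essentially supported in $\{ \|\xi\| \geq \sigma \}$, where $(1 + \|\xi\|^2)^s \geq \sigma^{2s}$, so that $\|f - g_\sigma\|_{L_2(\R^d)}^2 \leq \sigma^{-2s}\|f\|_{H^s(\R^d)}^2$. Combining the two yields the key zeroth-order estimate
\[
\|f - f_\sigma\|_{C_0(\R^d)} \leq C \sigma^{-s} \max\!\left( \|f\|_{C_0^s(\R^d)}, \|f\|_{H^s(\R^d)} \right).
\]

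Next, for the multi-index $\alpha$ I would split by the triangle inequality
\[
\|\partial^\alpha f - \partial^\alpha f_\sigma\|_{L_\infty(\R^d)} \leq \|\partial^\alpha f - \partial^\alpha g_\sigma\|_{L_\infty(\R^d)} + \|\partial^\alpha (g_\sigma - f_\sigma)\|_{L_\infty(\R^d)}.
\]
The first term is bounded directly by Proposition \ref{prop:NarWar04_Prop3_7} by $C \sigma^{|\alpha|-s}\|f\|_{C_0^s(\R^d)}$. For the second term I would use that both $g_\sigma$ and $f_\sigma$ are band-limited with spectra in a ball of radius of order $\sigma$, so their difference lies in a Paley--Wiener class $\mathcal{B}_{c\sigma}$; a Bernstein inequality then gives $\|\partial^\alpha(g_\sigma - f_\sigma)\|_{L_\infty(\R^d)} \leq C \sigma^{|\alpha|}\|g_\sigma - f_\sigma\|_{L_\infty(\R^d)}$. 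Bounding $\|g_\sigma - f_\sigma\|_{L_\infty(\R^d)} \leq \|g_\sigma - f\|_{L_\infty(\R^d)} + \|f - f_\sigma\|_{L_\infty(\R^d)}$ and inserting the two zeroth-order estimates from the previous step produces $\|g_\sigma - f_\sigma\|_{L_\infty(\R^d)} \leq C \sigma^{-s}\max(\|f\|_{C_0^s(\R^d)}, \|f\|_{H^s(\R^d)})$, whence $\|\partial^\alpha(g_\sigma - f_\sigma)\|_{L_\infty(\R^d)} \leq C \sigma^{|\alpha|-s}\max(\|f\|_{C_0^s(\R^d)}, \|f\|_{H^s(\R^d)})$. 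Adding the two contributions gives the asserted bound, with $C_{|\alpha|,s,d}$ absorbing $C_d$, the Bernstein constant, and the constants of Proposition \ref{prop:NarWar04_Prop3_7}.

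The main obstacle is the Bernstein step together with the band-matching it requires: I must ensure that $g_\sigma$ and $f_\sigma$ are genuinely band-limited to balls whose radii differ by at most a fixed factor (applying Theorem \ref{theo:NarWar04_Th3_5} with band $c\sigma$ if necessary, so that $g_\sigma$ is an admissible competitor and both functions lie in a single class $\mathcal{B}_{c\sigma}$), on which the multivariate Bernstein inequality $\|\partial^\alpha h\|_{L_\infty(\R^d)} \leq C (c\sigma)^{|\alpha|}\|h\|_{L_\infty(\R^d)}$ holds with a dimension-dependent constant. The constraint $|\alpha| < s$ is essential throughout, since it is what makes the exponent $|\alpha| - s$ negative and hence the whole bound decay as $\sigma \to \infty$; the remaining pieces (the high-frequency $L_2$ tail estimate and the triangle inequalities) are routine Fourier analysis.
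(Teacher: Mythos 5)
The paper does not prove this statement: it is presented verbatim as a restatement of Theorem 3.10 of \cite{NarWar04}, with the proof deferred to that reference. Your reconstruction follows essentially the same route as the original Narcowich--Ward argument --- compare $f_\sigma$ with the Calder\'on approximant $g_\sigma$, control $\partial^\alpha f-\partial^\alpha g_\sigma$ by Proposition \ref{prop:NarWar04_Prop3_7}, and control $\partial^\alpha(g_\sigma-f_\sigma)$ by a Bernstein--Nikolskii inequality on $\mathcal{B}_\sigma$ applied to the zeroth-order near-best-approximation bound from Theorem \ref{theo:NarWar04_Th3_5} --- so the architecture is sound. Note that no band rescaling is needed: by Lemma \ref{lemma:g_sigma_fourier_416} the function $g_\sigma$ is supported spectrally in $B(0,\sigma)$ exactly, so it is already an admissible competitor in the infimum of Theorem \ref{theo:NarWar04_Th3_5}, and $g_\sigma-f_\sigma\in\mathcal{B}_\sigma$.

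The one step you should not dismiss as routine is the $L_2$ tail estimate $\|f-g_\sigma\|_{L_2}\leq C\sigma^{-s}\|f\|_{H^s}$. It is not true that $f-g_\sigma$ has spectrum supported in $\{\|\xi\|\geq\sigma\}$: from Lemma \ref{lemma:g_sigma_fourier_416}, for $\|\xi\|<\sigma$ one has $\hat f(\xi)-\hat g_\sigma(\xi)=\hat f(\xi)\int_0^{\|\xi\|/\sigma}|\hat\psi(t)|^2\,dt/t$, which is nonzero at low frequencies. The bound still holds, but only because the construction of $\psi$ in Lemma \ref{lemma:existence} forces $\hat\psi(t)=O(t^{2m})$ near the origin with $2m>s$ (equivalently, the vanishing-moment property (d)), so that the multiplier $\int_0^{\min(\|\xi\|/\sigma,1)}|\hat\psi(t)|^2\,dt/t$ is $O\bigl((\|\xi\|/\sigma)^{s}\bigr)$ uniformly; inserting this and majorizing $\|\xi\|^{2s}$ by $\sigma^{2s}(1+\|\xi\|^2)^s\sigma^{-2s}\cdot(\|\xi\|/\sigma)^{0}$ --- more precisely $(\|\xi\|/\sigma)^{2s}\leq\sigma^{-2s}(1+\|\xi\|^2)^s$ --- gives the claimed $H^s$ bound. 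Without the moment conditions on $\psi$ the low-frequency damping would be too weak and the estimate would fail, so this input must be cited explicitly rather than attributed to band-limitedness alone.
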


The following proposition, which is Proposition 3.11 in \cite{NarWar04}, provides an upper-bound on a Sobolev norm of the interpolant $f_\sigma$.
\begin{proposition} \label{prop:NarWar04_Prop3_11}
Let $s\in\N$. 
Suppose $f \in C_0^s(\R^d) \cap H^s(\R^d) \cap L_1(\R^d)$, 
$f_\sigma$ is the interpolant from Theorem \ref{theo:NarWar04_Th3_5} with $\sigma > 0$ and $X^n := \{X_1,\dots, X_n \}$ satisfies the conditions in Theorem \ref{theo:NarWar04_Th3_5}.
Then there is a constant $C_{s,d}$ that depends only on $s$ and $d$ such that
\[
\| f_\sigma \|_{H^s(\R^d)} \leq C_{s,d} \max \left( \| f \|_{C_0^s(\R^d)}, \| f \|_{H^s(\R^d)} \right).
\]
\end{proposition}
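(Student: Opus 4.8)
The plan is to control $\|f_\sigma\|_{H^s(\R^d)}$ by comparing the interpolant $f_\sigma$ to the Calder\'on approximant $g_\sigma$ of (\ref{eq:approx_g}) and bounding the two pieces of the splitting $f_\sigma = g_\sigma + (f_\sigma - g_\sigma)$ separately. The reason for not simply reading the bound off Theorem \ref{theo:NarWar04_Th3_10} is twofold: that result controls only derivatives $\partial^\alpha f_\sigma$ with $|\alpha| < s$, whereas the $H^s$-norm (as the equivalent $W_2^s$-norm for integer $s$) also sees the top-order derivatives $|\alpha| = s$; and it provides $L_\infty$-estimates, which do not transfer to the $L_2$-estimates an $H^s$-bound requires on the unbounded domain $\R^d$. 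The decomposition above is essential because a naive Bernstein estimate $\|f_\sigma\|_{H^s}\le(1+\sigma^2)^{s/2}\|f_\sigma\|_{L_2}$ followed by $\|f_\sigma\|_{L_2}\le\|f\|_{L_2}+\|f-f_\sigma\|_{L_2}$ would leave an uncancelled $\sigma^s$ blow-up; subtracting $g_\sigma$ produces a remainder whose $L_2$-size decays like $\sigma^{-s}$, exactly absorbing that factor.

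First I would bound $\|g_\sigma\|_{H^s(\R^d)}$. Since $g_\sigma$ arises from $f$ through a fixed, uniformly bounded Calder\'on multiplier supported near the origin, Lemma \ref{eq:gsigma_norm} applied with its smoothness parameter set equal to $s$ gives $\|g_\sigma\|_{H^s(\R^d)} \le C\|f\|_{H^s(\R^d)}$, with no $\sigma$-dependent factor because source and target smoothness coincide. The same construction shows $g_\sigma$ is band-limited, with $\mathrm{supp}(\hat g_\sigma)\subset B(0,c_0\sigma)$ for a constant $c_0$ depending only on the window; as $f_\sigma\in\mathcal{B}_\sigma$, the remainder $u:=f_\sigma-g_\sigma$ lies in $\mathcal{B}_\rho$ with $\rho:=\max(1,c_0)\sigma$. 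For such band-limited $u$, Plancherel's identity together with the Fourier definition (\ref{eq:Sobolev_fourier}) yields the Bernstein-type inequality $\|u\|_{H^s(\R^d)}\le(1+\rho^2)^{s/2}\|u\|_{L_2(\R^d)}$, since $(1+\|\xi\|^2)^s\le(1+\rho^2)^s$ on the support of $\hat u$.

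It then remains to estimate $\|u\|_{L_2(\R^d)}$ via $\|f_\sigma-g_\sigma\|_{L_2}\le\|f-f_\sigma\|_{L_2}+\|f-g_\sigma\|_{L_2}$. For the second term the definition of $g_\sigma$ gives the standard spectral-tail estimate $\|f-g_\sigma\|_{L_2(\R^d)}\le C\sigma^{-s}\|f\|_{H^s(\R^d)}$. For the first term I would invoke Theorem \ref{theo:NarWar04_Th3_5}, inserting $g=g_\sigma$ into its infimum over $\mathcal{B}_\sigma$; this bounds $\|f-f_\sigma\|_{L_2}$ by $C_d\max(\|f-g_\sigma\|_{C_0(\R^d)},\|f-g_\sigma\|_{L_2(\R^d)})$, where the $C_0$-part is controlled by Proposition \ref{prop:NarWar04_Prop3_7} with $\alpha=0$ as $\|f-g_\sigma\|_{C_0(\R^d)}\le C\sigma^{-s}\|f\|_{C_0^s(\R^d)}$ and the $L_2$-part by the estimate just noted. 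Combining these gives $\|u\|_{L_2(\R^d)}\le C\sigma^{-s}\max(\|f\|_{C_0^s(\R^d)},\|f\|_{H^s(\R^d)})$; it is precisely this $C_0/L_2$ split in Theorem \ref{theo:NarWar04_Th3_5} that is the source of the $\max$ in the conclusion. Feeding this into the Bernstein inequality produces $\|u\|_{H^s}\le C(1+\rho^2)^{s/2}\sigma^{-s}\max(\|f\|_{C_0^s},\|f\|_{H^s})$, and adding the bound on $\|g_\sigma\|_{H^s}$ closes the estimate once the prefactor is shown to be bounded.

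The hard part will be making the constant uniform in $\sigma$, i.e.\ absorbing $(1+\rho^2)^{s/2}\sigma^{-s}$ into an $(s,d)$-dependent constant. This is where the diameter hypothesis enters: the admissibility condition of Theorem \ref{theo:NarWar04_Th3_5} forces $\sigma\ge\sigma_0\propto q_{X^n}^{-1}$, and since $\mathrm{diam}(X^n)\le1$ gives $q_{X^n}\le1$, we get $\sigma\ge c_d>0$ with $c_d$ depending only on $d$. With $\rho=\max(1,c_0)\sigma$ one checks that $(1+\rho^2)^{s/2}\sigma^{-s}$ is a continuous function of $\sigma$ tending to $\max(1,c_0)^s$ as $\sigma\to\infty$ and bounded on $[c_d,\infty)$, hence absorbable into $C_{s,d}$. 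I would conclude by collecting the two contributions, $\|f_\sigma\|_{H^s(\R^d)}\le\|g_\sigma\|_{H^s(\R^d)}+\|u\|_{H^s(\R^d)}\le C_{s,d}\max(\|f\|_{C_0^s(\R^d)},\|f\|_{H^s(\R^d)})$, which is the assertion.
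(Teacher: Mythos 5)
Your statement is one the paper does not actually prove: Proposition~\ref{prop:NarWar04_Prop3_11} is a verbatim restatement of Proposition~3.11 of \cite{NarWar04}, imported with only the added hypothesis $f\in L_1(\R^d)$, so there is no internal proof to compare against. Judged on its own, your reconstruction is sound and follows the natural (and, as it happens, the original Narcowich--Ward) route: split $f_\sigma=g_\sigma+(f_\sigma-g_\sigma)$, control $g_\sigma$ by Lemma~\ref{eq:gsigma_norm} with $r=s$, observe that the difference is band-limited (indeed $\mathrm{supp}(\hat g_\sigma)\subset B(0,\sigma)$ by Lemma~\ref{lemma:g_sigma_fourier_416}, so you may take $c_0=1$ and $\rho=\sigma$), apply Bernstein, and feed in the near-best-approximation property of Theorem~\ref{theo:NarWar04_Th3_5} with $g=g_\sigma$; the factor $(1+\sigma^2)^{s/2}\sigma^{-s}$ is then absorbed because the admissibility condition together with $q_{X^n}\le 1/2$ forces $\sigma\ge 1$ (as the paper itself notes in Remark~\ref{rem:append-assumptions}). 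The one step you assert without justification is the spectral-tail estimate $\|f-g_\sigma\|_{L_2}\le C\sigma^{-s}\|f\|_{H^s}$. It is true, but within this paper's framework it is not a citation-free ``standard'' fact: you need Lemma~\ref{lemma:g_sigma_fourier_416} to write $\hat f(\xi)-\hat g_\sigma(\xi)=\hat f(\xi)\int_0^{\min(1,\|\xi\|/\sigma)}(\hat\psi(t))^2\,\frac{dt}{t}$, and then the bound $\int_0^{a}(\hat\psi(t))^2\,\frac{dt}{t}\le C a^{s}$ for $a\le 1$, which rests on the construction in Lemma~\ref{lemma:existence} giving $\hat\psi(t)=O(t^{2m})$ near the origin with $2m>s$ (for $\|\xi\|\ge\sigma$ the multiplier equals $1\le(\|\xi\|/\sigma)^s$, so the same bound holds). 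Adding those two or three lines closes the only gap; everything else checks out.
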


\begin{remark} \rm \label{rem:append-assumptions}
We have the following comments on Propositions~\ref{prop:NarWar04_Prop3_7}, \ref{prop:NarWar04_Prop3_11} and Theorem~\ref{theo:NarWar04_Th3_10}.\vspace{-3mm}
\begin{itemize}
\item 
In the original statement of Proposition 3.7 in \cite{NarWar04}, the assumption $f \in L_1(\R^d)$ is missing. 
However, since this assumption is required for the function $g_\sigma$ to be well-defined (see Lemma \ref{lemma:g_sigma_fourier_416}), we have included it in Proposition~\ref{prop:NarWar04_Prop3_7}. Since Theorem 3.10 and Proposition 3.11 of \cite{NarWar04} depend on Proposition 3.7, we have included the assumption $f \in L_1(\R^d)$ in Theorem~\ref{theo:NarWar04_Th3_10} and Proposition~\ref{prop:NarWar04_Prop3_11}.\vspace{-1mm}

\item In the original statement of Proposition 3.11 in \cite{NarWar04}, the condition $\sigma \geq 1$ is required. 
This condition is implicitly satisfied by $\sigma$ in Proposition \ref{prop:NarWar04_Prop3_11} as the condition on $\sigma$ in Theorem \ref{theo:NarWar04_Th3_5} implies $\sigma\ge 1$, which can be seen from the fact that
$q_{X^n} \leq 1/2$ (follows from the assumption ${\rm diam}(X^n) \leq 1$) and the definition of the lower-bound $\sigma_0$ of $\sigma$. 



\end{itemize}
\end{remark}

\subsection{The Sobolev norm of the interpolant $f_\sigma$}


Here we provide an upper-bound on the Sobolev (RKHS) norm of the interpolant $f_\sigma$ in Theorem \ref{theo:NarWar04_Th3_5}.
The result essentially follows from an argument in p.298 of \cite{NarWar04}, but we prove it for completeness. 


\begin{lemma} \label{lemma:Sob_norm_interpolant_78}
Let $r\in\R$, $r > d/2$ and $s \in \N$, $r \geq s$. Let $k_r$ be a kernel on $\R^d$ such that $k_r(x,y) := \Phi(x-y)$, where $\Phi:\R^d \to \R$ satisfies 
\[
C_1 (1 + \| \xi \|^2)^{-r} \leq \hat{\Phi}(\xi), \quad \xi \in \R^d
\]
for some constant $C_1 > 0$ independent of $\xi$.
Suppose $f \in C_0^s(\R^d) \cap H^s(\R^d) \cap L_1(\R^d)$, $f_\sigma$ is the interpolant from Theorem \ref{theo:NarWar04_Th3_5} with $\sigma > 0$ and $X^n := \{X_1,\dots, X_n \}$ satisfies the conditions in Theorem \ref{theo:NarWar04_Th3_5}.
Then we have
\begin{equation}
\| f_{\sigma} \|_{ \H_{k_r} } \leq C_{s,d,k_r} \sigma^{r-s} \max \left( \| f \|_{C_0^s(\R^d)}, \| f \|_{H^s(\R^d)} \right) \label{eq:Sob_norm},
\end{equation}
where $C_{s,d,k_r}$ is a constant only depending on $r$, $s$, $d$, and $k_r$ (note that the dependency on the kernel $k_r$ is via the constant $C_1$).
\end{lemma}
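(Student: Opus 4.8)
The plan is to pass to the Fourier side and combine three ingredients: the hypothesis on $\hat\Phi$ controls the RKHS norm by the Sobolev norm $\| \cd \|_{H^r(\R^d)}$; the band-limitedness of $f_\sigma$ trades the $H^r$ norm for the $H^s$ norm at the cost of a factor $(1+\sigma^2)^{(r-s)/2}$; and Proposition \ref{prop:NarWar04_Prop3_11} already bounds $\| f_\sigma \|_{H^s(\R^d)}$ by the desired right-hand side. The whole argument is a short spectral computation.

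First I would record the Fourier representation of the RKHS norm. Since $k_r(x,y) = \Phi(x-y)$, every $g \in \H_{k_r}$ satisfies $\| g \|_{\H_{k_r}}^2 = \int |\hat g(\xi)|^2 \hat\Phi(\xi)^{-1} d\xi$, and the assumed lower bound $\hat\Phi(\xi) \geq C_1 (1+\|\xi\|^2)^{-r}$ gives $\hat\Phi(\xi)^{-1} \leq C_1^{-1} (1+\|\xi\|^2)^r$. Hence
\[
\| f_\sigma \|_{\H_{k_r}}^2 \leq C_1^{-1} \int |\hat{f}_\sigma(\xi)|^2 (1+\|\xi\|^2)^r d\xi = C_1^{-1} \| f_\sigma \|_{H^r(\R^d)}^2 .
\]
This is finite because $f_\sigma \in \mathcal{B}_\sigma$ has $\hat{f}_\sigma$ supported in $B(0,\sigma)$, so the integrand has compact support and the weight $(1+\|\xi\|^2)^r$ is bounded there.

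Next I would exploit the band-limitedness directly. On $B(0,\sigma) \supset {\rm supp}(\hat{f}_\sigma)$ we have $1+\|\xi\|^2 \leq 1+\sigma^2$, and since $r \geq s$ the factor $(1+\|\xi\|^2)^{r-s} \leq (1+\sigma^2)^{r-s}$ there. Writing $(1+\|\xi\|^2)^r = (1+\|\xi\|^2)^{r-s}(1+\|\xi\|^2)^s$ and pulling out the bound yields $\| f_\sigma \|_{H^r(\R^d)}^2 \leq (1+\sigma^2)^{r-s} \| f_\sigma \|_{H^s(\R^d)}^2$. Combining this with the previous display and then invoking Proposition \ref{prop:NarWar04_Prop3_11} gives
\[
\| f_\sigma \|_{\H_{k_r}} \leq C_1^{-1/2} C_{s,d} (1+\sigma^2)^{\frac{r-s}{2}} \max\left( \| f \|_{C_0^s(\R^d)}, \| f \|_{H^s(\R^d)} \right).
\]
Finally, since $\sigma \geq 1$ (Remark \ref{rem:append-assumptions}), we have $(1+\sigma^2)^{\frac{r-s}{2}} \leq 2^{\frac{r-s}{2}} \sigma^{r-s}$, and absorbing $C_1^{-1/2} C_{s,d} 2^{(r-s)/2}$ into a single constant $C_{s,d,k_r}$ (whose dependence on $k_r$ enters only through $C_1$) delivers the claimed estimate.

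I do not anticipate a serious obstacle, as the argument is elementary once phrased on the Fourier side. The only points needing care are verifying that $f_\sigma \in \H_{k_r}$ so that its norm is finite (which follows from the compact spectral support), and confirming $\sigma \geq 1$ so the factor $(1+\sigma^2)$ can be replaced by a power of $\sigma$; the latter is exactly the observation recorded in Remark \ref{rem:append-assumptions}. Tracking the constant through its dependence on $C_1$ is the only bookkeeping to keep straight.
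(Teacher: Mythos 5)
Your proof is correct and follows essentially the same route as the paper's: pass to the Fourier representation of the RKHS norm, use the lower bound on $\hat{\Phi}$ to dominate by the $H^r$ norm, exploit the compact spectral support of $f_\sigma$ to trade $H^r$ for $H^s$ at the cost of $(1+\sigma^2)^{(r-s)/2}$, invoke Proposition \ref{prop:NarWar04_Prop3_11}, and finish with $\sigma \geq 1$. No gaps.
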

\begin{proof}
As in Remark \ref{rem:append-assumptions}, we have $\sigma \geq 1$.
We then have
\begin{eqnarray*}
\| f_\sigma \|_{ \H_{k_r}  }^2 &=&  \int_{\| \xi \| \leq \sigma}  | \hat{f_\sigma}(\xi) |^2 \hat{\Phi}(\xi)^{-1} d\xi \quad (\because f \in \mathcal{B}_\sigma) \\ 
&\leq& C_1^{-1} \int_{\| \xi \| \leq \sigma}  | \hat{f_\sigma}(\xi) |^2 (1 + \| \xi \|^2)^r  d\xi \\
&\leq& C_1^{-1} (1 + \sigma^2)^{r-s}  \int_{\| \xi \| \leq \sigma}  | \hat{f_\sigma}(\xi) |^2  (1 + \| \xi \|^2)^s  d\xi \quad (\because r - s \geq 0) \\
&\leq&  C_1^{-1} (1 + \sigma^2)^{r-s} \| f _\sigma\|_{H^s(\R^d)}^2 
\leq C_1^{-1} 2^{r-s} \sigma^{2(r-s)}   \| f_\sigma \|_{H^s(\R^d)}^2 \quad (\because \sigma \geq 1).
\end{eqnarray*}
Therefore, by using Proposition \ref{prop:NarWar04_Prop3_11}, it follows that
\begin{eqnarray*}
\| f_\sigma \|_{ \H_{k_r}  } 
&\leq&  C_1^{-1/2} 2^{(r-s)/2} \sigma^{r-s}   \| f_\sigma \|_{H^s(\R^d)} \\
&\leq& C_1^{-1/2} 2^{(r-s)/2} \sigma^{r-s} C_{s,d}  \max \left( \| f \|_{C_0^s(\R^d)}, \| f \|_{H^s(\R^d)} \right),
\end{eqnarray*}
where $C_{s,d}$ is a constant only depending on $s$ and $d$.
The proof completes by setting $ C_{s,d,k_r} := C_1^{-1/2} 2^{(r-s)/2} C_{s,d}$.
\end{proof}

\section{Approximation in Sobolev spaces} 
\subsection{Fundamental lemma} \label{sec:appendix_lemma}
In the proof of Theorem \ref{theo:sob_weight}, we used Proposition 3.7 of \cite{NarWar04}, which assumes the existence of a function $\psi: \R^d \to \R$ satisfying the properties in Lemma \ref{lemma:existence}.
Since the existence of this function is not proved in \cite{NarWar04}, we will first prove it for completeness. 
Lemma \ref{lemma:existence} is a variant of Lemma 1.1 of \cite{FraJawWei91}, from which we borrowed the proof idea.

\begin{lemma} \label{lemma:existence}
Let $s\in\N$. Then there exists a function $\psi: \R^d \to \R$ satisfying the following properties:
\begin{itemize}
\item[(a)] $\psi$ is radial;
\item[(b)] $\psi$ is a Schwartz function;
\item[(c)] ${\rm supp}(\hat{\psi}) \subset B(0,1)$;
\item[(d)] $\int_{\R^d} x^\beta \psi (x) dx = 0$ for every multi-index $\beta$ satisfying $| \beta | := \sum_{i=1}^d \beta_i \leq s$, where $x^\beta := \prod_{i=1}^d x_i^{\beta_i}$.
\item[(e)] $\psi$ satisfies 
\begin{equation} \label{eq:Calderon_assump}
\int_0^\infty | \hat{\psi}(t \xi) |^2 \frac{dt}{t} = 1,\quad \forall \xi \in \R^d \backslash \{0\}.
\end{equation}
\end{itemize} 
\end{lemma}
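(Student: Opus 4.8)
The plan is to construct $\psi$ through its Fourier transform, since every requirement except the Schwartz property and the real-valuedness is most naturally a condition on $\hat\psi$. First I would take $\hat\psi$ to be radial, writing $\hat\psi(\xi) = \eta(\|\xi\|)$ for a one-variable profile $\eta$; this automatically makes $\psi$ radial, giving (a). I would choose $\eta \in C^\infty([0,\infty))$ real-valued, nonnegative and compactly supported inside the annulus $[1/2,1]$, so that in particular $\eta \equiv 0$ near the origin and ${\rm supp}(\eta) \subset [0,1]$. Supporting $\hat\psi$ in $B(0,1)$ then yields (c), and because $\eta$ vanishes near $0$ the radial function $\hat\psi$ is identically zero in a neighbourhood of the origin, hence $C^\infty$ there; away from the origin the smoothness of $\|\cdot\|$ and of $\eta$ gives smoothness of $\hat\psi$, so $\hat\psi \in C_c^\infty(\R^d)$.

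Next I would read off (b), (d) and the realness from this single choice. Since $\hat\psi \in C_c^\infty(\R^d)$, its inverse Fourier transform $\psi$ is smooth, and integrating by parts repeatedly against the compactly supported smooth $\hat\psi$ shows that $x^\alpha \partial^\gamma \psi$ is bounded for all multi-indices $\alpha,\gamma$; thus $\psi$ is a Schwartz function, giving (b). Because $\hat\psi$ is radial and real-valued it satisfies $\overline{\hat\psi(-\xi)} = \hat\psi(\xi)$, which forces $\psi$ to be real-valued. For (d) I would use the identity $\partial^\beta \hat\psi(0) = (-i)^{|\beta|}\int_{\R^d} x^\beta \psi(x)\,dx$, valid for all $\beta$ since $\psi$ is Schwartz; as $\hat\psi$ vanishes identically near the origin, $\partial^\beta \hat\psi(0)=0$ for every $\beta$, so in particular $\int_{\R^d} x^\beta \psi(x)\,dx = 0$ whenever $|\beta|\le s$.

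The remaining condition (e) is where the radial structure does the essential work. For fixed $\xi$ with $\rho := \|\xi\| > 0$, substituting $u = t\rho$ (so that $dt/t = du/u$) gives $\int_0^\infty |\hat\psi(t\xi)|^2 \tfrac{dt}{t} = \int_0^\infty |\eta(u)|^2 \tfrac{du}{u}$, a quantity independent of $\xi$. Since $\eta$ is smooth, supported in $[1/2,1]$ and not identically zero, the constant $I := \int_0^\infty |\eta(u)|^2 \tfrac{du}{u} = \int_{1/2}^1 |\eta(u)|^2 \tfrac{du}{u}$ is finite and strictly positive. Replacing $\eta$ by $\eta/\sqrt{I}$, equivalently $\psi$ by $\psi/\sqrt{I}$, normalises this integral to $1$ without disturbing (a)--(d), so (e) holds for every $\xi \neq 0$.

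No step presents a genuine analytic obstacle: the construction succeeds precisely because the five requirements become mutually compatible once one passes to the Fourier side. The only points demanding care are the verification that a compactly supported smooth $\hat\psi$ produces a Schwartz and real-valued $\psi$, and the observation that (e) collapses to a one-dimensional normalisation for radial profiles. It is this collapse, together with arranging $\hat\psi$ to vanish near the origin, that lets one explicit bump simultaneously satisfy the band-limiting, vanishing-moment and Calder\'on requirements.
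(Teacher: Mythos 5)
Your construction is correct, but it takes a genuinely different route from the paper. The paper starts from the standard radial bump $\hat{u}(\xi)=\exp(-1/(1-\|\xi\|^2))$ supported in $B(0,1)$ and sets $h:=\Delta^m u$ with $m>s/2$, so that $\hat{h}(\xi)=C_m\|\xi\|^{2m}\hat{u}(\xi)$; the polynomial factor $\|\xi\|^{2m}$ is what forces $\partial^\beta\hat{h}(0)=0$ for $|\beta|\leq s<2m$ (hence property (d)) and what makes $\int_0^1|\hat{h}(t\xi)|^2\,\frac{dt}{t}$ converge near $t=0$, since $|\hat h(t\xi)|=O(t^{2m})$. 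You instead place the support of $\hat{\psi}$ in an annulus bounded away from the origin, so that $\hat{\psi}$ vanishes identically near $0$; this makes \emph{all} derivatives at the origin vanish, giving the vanishing of all moments (strictly stronger than (d)) and making the convergence of the Calder\'on integral at $t=0$ immediate. Both arguments then use the same two facts: the scale invariance of the Haar measure $dt/t$, which reduces (e) to a single one-dimensional normalising constant, and the stability of the Schwartz/radial/real-valued properties under the inverse Fourier transform. Your version is simpler and delivers a stronger conclusion; the paper's version follows the classical Frazier--Jawerth--Weiss construction it cites, where the Laplacian power is the traditional device for manufacturing finitely many vanishing moments. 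Since the downstream results (Lemma \ref{lemma:g_sigma_delta_fourier} onward) only use properties (a)--(e), either function can be used interchangeably.
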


\begin{proof}
Define a function $u \in L_1(\R^d)$ as the inverse Fourier transform of a function $\hat{u} \in L_1(\R^d)$ defined by $\hat{u}(\xi) := \exp \left(- 1 / (1- \| \xi \|^2) \right)$ if $\| \xi \| < 1$ and $\hat{u}(\xi) = 0$ otherwise.
Then $\hat{u}$ is radial, Schwartz, and satisfies ${\rm supp}(\hat{u}) \subset B(0,1)$ \citep[Sec.~2.28]{AdaFou03}.
Also note that $u$ is real-valued, since $\hat{u}$ is symmetric.

Let $m \in \mathbb{N}$ satisfy $m > s/2$.
Define a function $h: \R^d \to \R$ by 
$h := \Delta^m u$,
 where $\Delta$ denotes the Laplacian defined by $\Delta f := \sum_{i=1}^d \frac{\partial^2 f}{\partial x_i^2} $.
Note that we have (see e.g.~p.117 of \citealp{Ste70})
\begin{equation} \label{eq:laplacian_fourier_199}
\hat{h}(\xi) = C_m \| \xi \|^{2m} \hat{u}(\xi),
\end{equation}
where $C_m$ is a constant depending only on $m$. 
From this expression, it follows that $\hat{h}$ is radial and Schwartz (and so is $h$),  and that ${\rm supp}(\hat{h}) \subset B(0,1)$. 
Thus the function $h$ satisfies the required properties (a) (b) and (c).
Later we will define the function $\psi$ in the assertion based on $h$.

We next show that $h$ satisfies the property (d).
Let $\beta \in \N_0^d$ be any multi-index satisfying $| \beta | \leq s$, and let $p_\beta (x) := x^\beta$. 
It follows that $p_\beta h$ is Schwartz, and thus $p_\beta h \in L_1(\R^d)$.
Then  we have 
\begin{equation} \label{eq:integral_mono_211}
\int x^\beta h(x) dx = (\widehat{p_\beta h})(0),
\end{equation}
which follows from $p_\beta h \in L_1(\R^d)$ and from the definition of Fourier transform.
Note that we have $\widehat{p_\beta h}(\xi) = i^{| \beta |} \partial^{ \beta } \hat{h} (\xi)$, which can be expanded as
\begin{equation}
 \partial^{ \beta } \hat{h} (\xi)
\stackrel{\eqref{eq:laplacian_fourier_199}}{=}  \partial^\beta \left[ C_m \| \xi \|^{2m} \hat{u}(\xi) \right]
= C_m  \sum_{\gamma \in \N_0^d: \gamma \leq \beta} \binom{\beta}{\gamma} \partial^\gamma \left[\| \xi \|^{2m} \right] \partial^\beta \left[ \hat{u}(\xi) \right], \label{eq:mixed_partial_223}
\end{equation}
where $\gamma \leq \beta$ is defined by that $\gamma_i \leq \beta_i$ for all $i=1,\dots,d$, and $\binom{\beta}{\gamma} := \frac{ \prod_{i=1}^d \beta_i ! }{ \prod_{i=1}^d \gamma_i! }$. 
Using the multinomial theorem, the mixed partial derivative $\partial^\gamma \left[\| \xi \|^{2m} \right]$ in the above equation can be further expanded as
\begin{eqnarray} 
\partial^\gamma \left[\| \xi \|^{2m} \right] 
&=& \sum_{\alpha \in \N_0^d: | \alpha | = m} \frac{m!}{ \prod_{i=1}^d \alpha_i ! } \prod_{i=1}^d \frac{d^{ \gamma_i }}{d \xi_i^{\gamma_i}} \left[ \xi_i^{2 \alpha_i} \right] . \label{eq:multinomial_231}
\end{eqnarray}
From this it follows that
$\left. \partial^\gamma \left[\| \xi \|^{2m} \right] \right|_{\xi = 0} = 0$,
and thus (\ref{eq:mixed_partial_223}) gives that
$\partial^{ \beta } \hat{h} (0) = 0$.
Therefore, from (\ref{eq:integral_mono_211}) and $\widehat{p_\beta h}(\xi) = i^{| \beta |} \partial^{ \beta } \hat{h} (\xi)$, it holds that
$\int_\R~ x^\beta h(x) dx  = 0$,
which is the property (d). 

We next show that $\int_0^\infty | \hat{h}(t \xi) |^2 \frac{dt}{t} < \infty$ for all $\xi \in \R^d \backslash \{0\}$.
Since $\hat{h}$ is bounded and ${\rm supp} (\hat{h}) \subset B(0,1)$, we have $\int_1^\infty | \hat{h}(t \xi) |^2 \frac{dt}{t} < \infty$.
Also, since $| \hat{h}(t\xi) | = O(t^{2m})$ as $t \to +0$ (which follows from $\hat{h}(t\xi) = (-1)^m \| t\xi \|^{2m} \hat{u}(t \xi)$ with $\hat{u}$ being bounded), we have $\int_0^1  | \hat{h}(t \xi) |^2 \frac{dt}{t} < \infty$.
Therefore $\int_0^\infty | \hat{h}(t \xi) |^2 \frac{dt}{t} < \infty$.

Note that since $\hat{h}$ is radial, $\int_0^\infty | \hat{h}(t \xi) |^2 \frac{dt}{t}$ only depends on the norm $\| \xi \|$. 
Furthermore, $\int_0^\infty | \hat{h}(t \xi) |^2 \frac{dt}{t}$ remains the same for different values of the norm $\| \xi \| > 0$ due to the property of the Haar measure $dt/t$.
In other words, there is a constant $0 < C < \infty$ satisfying $\int_0^\infty | \hat{h}(t \xi) |^2 \frac{dt}{t} = C$ for all $\xi \in \R^d \backslash \{0\}$. 
The proof is completed by defining $\psi$ in the assertion as $\psi(x) := C^{-1/2} h(x)$.
\end{proof}

\noindent
\textbf{Notation.}
Note that $\psi$ being radial implies that $\hat{\psi}$ is radial, so $\hat{\psi}(t \xi)$ in (\ref{eq:Calderon_assump}) depends on $\xi$ only through its norm $\| \xi \|$.
Therefore we may henceforth use the notation 
\begin{equation} \label{eq:notation_psi_xi}
 \hat{\psi}(t \| \xi \|)
\end{equation}
to denote $\hat{\psi}(t \xi)$, to emphasize its dependence on the norm.
Similarly, we use the notation $\hat{\psi}(t)$ to imply $\hat{\psi}(t \xi)$ for some (and any) $\xi \in \R^d$ with $\| \xi \| = 1$.

\subsection{Approximation via Calder\'{o}n's  formula} \label{sec:sob_approx} 

The following result is known as Calder\'{o}n's  formula \citep[Theorem 1.2]{FraJawWei91}, and will be used in defining an approximate function \eqref{eq:approx_g}.
We use below the notation $f * g$ for any functions $f: \R^d \to \R$ and $g:  \R^d \to \R$ to denote their convolution: $(f * g) (x) := \int f(x-y)g(y) dy$.
\begin{theorem}[Calder\'{o}n's  formula]
Let $\psi \in L_1$ be a radial function satisfying (\ref{eq:Calderon_assump}), and for $t > 0$ define
\begin{equation} \label{eq:mollifier_286}
\psi_t(x) := \frac{1}{t^d}\psi(x/t), \quad x \in \R^d.
\end{equation}
Then for any $f \in L_2$, we have
\begin{equation} \label{eq:Calderon_formula}
f(x) = \int_0^\infty ({\psi_t} * \psi_t * f)(x)\ \frac{dt}{t}, \quad x \in \mathbb{R}^d,
\end{equation}
where the improper integral in (\ref{eq:Calderon_formula}) is to be interpreted in the following $L_2$ sense: if $0 < \varepsilon < \delta < \infty$ and $f_{\varepsilon, \delta}(x) := \int_{\varepsilon}^\delta ({\psi_t} * \psi_t * f)(x)\frac{dt}{t}$, then $\| f - f_{\varepsilon,\delta}\|_{L_2} \to 0$ as $\varepsilon \to +0$ and $\delta \to \infty$ independently.
\end{theorem}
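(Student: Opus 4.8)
The plan is to move everything to the Fourier side and reduce the assertion to a one–dimensional dominated–convergence argument. First I would record how the operations act on Fourier transforms. The dilation identity $\psi_t(x)=t^{-d}\psi(x/t)$ gives, after the change of variables $y=x/t$, that $\widehat{\psi_t}(\xi)=\hat\psi(t\xi)$. Since $\psi$ is real-valued and radial, $\hat\psi$ is real-valued, so $\hat\psi(t\xi)^2=|\hat\psi(t\xi)|^2$. As $\psi_t*\psi_t\in L_1(\R^d)$ and $f\in L_2(\R^d)$, Young's inequality gives $\psi_t*\psi_t*f\in L_2(\R^d)$, and the convolution theorem yields
\[
\widehat{\psi_t*\psi_t*f}(\xi)=\hat\psi(t\xi)^2\,\hat f(\xi)=|\hat\psi(t\xi)|^2\,\hat f(\xi).
\]

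Next I would introduce the radial multiplier $m_{\varepsilon,\delta}(\xi):=\int_\varepsilon^\delta|\hat\psi(t\xi)|^2\,\frac{dt}{t}$. By the hypothesis (\ref{eq:Calderon_assump}) the integrand is nonnegative and integrates to $1$ over $(0,\infty)$, so $0\le m_{\varepsilon,\delta}(\xi)\le1$ for every $\xi\neq0$ and $m_{\varepsilon,\delta}(\xi)\to1$ as $\varepsilon\to+0$, $\delta\to\infty$. The identity I want is $\widehat{f_{\varepsilon,\delta}}(\xi)=m_{\varepsilon,\delta}(\xi)\,\hat f(\xi)$ in $L_2(\R^d)$. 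Granting it, Plancherel's theorem gives, for a dimensional constant $c_d>0$,
\[
\|f-f_{\varepsilon,\delta}\|_{L_2}^2=c_d\int_{\R^d}\bigl|1-m_{\varepsilon,\delta}(\xi)\bigr|^2\,|\hat f(\xi)|^2\,d\xi,
\]
and since the integrand is dominated by the integrable function $|\hat f|^2$ and tends to $0$ for a.e.\ $\xi$, the dominated convergence theorem shows $\|f-f_{\varepsilon,\delta}\|_{L_2}\to0$, which is exactly the claimed $L_2$ interpretation of the improper integral.

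The crux is the multiplier identity, that is, interchanging the $L_2$-extended Fourier transform with the integral $\int_\varepsilon^\delta(\cdot)\frac{dt}{t}$. I would justify this through Bochner integration on the compact interval $[\varepsilon,\delta]$, which avoids having to assume $\hat f\in L_1$. Using that $t\mapsto\psi_t$ is continuous into $L_1(\R^d)$, one checks that $t\mapsto\psi_t*\psi_t*f$ is continuous into $L_2(\R^d)$, hence Bochner integrable against the finite measure $\frac{dt}{t}$ on $[\varepsilon,\delta]$; this makes $f_{\varepsilon,\delta}$ a well-defined element of $L_2(\R^d)$ agreeing a.e.\ with the pointwise formula in the statement. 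Because the Fourier transform is a bounded linear operator on $L_2(\R^d)$, it commutes with the Bochner integral, so $\widehat{f_{\varepsilon,\delta}}=\int_\varepsilon^\delta|\hat\psi(t\cdot)|^2\hat f\,\frac{dt}{t}$ in $L_2$. The integrand here is itself $L_2$-valued and Bochner integrable, since
\[
\int_\varepsilon^\delta\bigl\||\hat\psi(t\cdot)|^2\hat f\bigr\|_{L_2}\,\frac{dt}{t}\le\|\hat\psi\|_{L_\infty}^2\,\|\hat f\|_{L_2}\,\log(\delta/\varepsilon)<\infty,
\]
so the standard identification of a Bochner integral with its a.e.\ pointwise integral gives $\widehat{f_{\varepsilon,\delta}}(\xi)=\int_\varepsilon^\delta|\hat\psi(t\xi)|^2\hat f(\xi)\frac{dt}{t}=m_{\varepsilon,\delta}(\xi)\hat f(\xi)$ for a.e.\ $\xi$, as required.

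The main obstacle is precisely this measure-theoretic bookkeeping — the continuity (strong measurability) of $t\mapsto\psi_t*\psi_t*f$ and the commutation of the Fourier transform with the $t$-integral — rather than any deep analytic difficulty, all the genuine content being contained in the normalization (\ref{eq:Calderon_assump}). An equivalent route that sidesteps Bochner integrals is to pair $f_{\varepsilon,\delta}$ with an arbitrary Schwartz function, interchange the (now absolutely convergent) integrals by Fubini after applying Parseval's identity, and then conclude by the density of Schwartz functions in $L_2(\R^d)$.
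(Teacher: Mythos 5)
The paper does not actually prove this statement: it is imported verbatim as Theorem 1.2 of \cite{FraJawWei91}, so there is no in-paper argument to compare against. Your proof is correct and is the standard one (it is essentially how the result is established in the harmonic-analysis literature): pass to the Fourier side, observe that $\widehat{f_{\varepsilon,\delta}}=m_{\varepsilon,\delta}\hat f$ with $m_{\varepsilon,\delta}(\xi)=\int_\varepsilon^\delta|\hat\psi(t\xi)|^2\,\frac{dt}{t}$, note that $0\le m_{\varepsilon,\delta}\le 1$ and $m_{\varepsilon,\delta}\to 1$ a.e.\ by the normalization (\ref{eq:Calderon_assump}), and conclude by Plancherel and dominated convergence. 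All the points that need care are the ones you flag: that $\hat\psi$ is real (so $\hat\psi(t\xi)^2=|\hat\psi(t\xi)|^2$), which follows from $\psi$ being real and radial hence even; that the convolution theorem applies in the $L_1*L_2$ setting; and that the Fourier transform commutes with the $t$-integral over the compact interval $[\varepsilon,\delta]$, which your Bochner-integral (or, alternatively, duality-plus-Fubini) argument handles since $t\mapsto\psi_t$ is $L_1$-continuous and $\frac{dt}{t}$ is a finite measure on $[\varepsilon,\delta]$. I see no gap; note also that your argument is consistent with the paper's own treatment of the truncated version $g_{\sigma,\delta}$ in Lemmas \ref{lemma:g_sigma_delta_fourier} and \ref{lemma:g_sigma_fourier_416}, which carry out the same Fourier-multiplier computation under the additional hypothesis $f\in L_1\cap L_2$ precisely to avoid the Bochner-integral bookkeeping you supply for general $f\in L_2$.
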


Note that it is easy to verify from (\ref{eq:mollifier_286}) that
$\| \psi \|_{L_1} = \| \psi_t \|_{L_1}$ holds for all $t > 0$.
Let $\psi$ be the function in Lemma \ref{lemma:existence}. 
Following Section 3.2 of \cite{NarWar04}, we consider the following approximation of $f$ based on Calder\'{o}n's  formula (\ref{eq:Calderon_formula}):
\begin{equation} \label{eq:approx_g}
g_\sigma(x) := \int_{1/\sigma}^\infty ({\psi}_t * \psi_t * f)(x)\ \frac{dt}{t}.
\end{equation}
The integral in (\ref{eq:approx_g}) is also improper and should be interpreted as follows. 
Let $\delta > 1/\sigma$ and define 
\begin{equation} \label{eq:g_sigma_delta}
g_{\sigma, \delta} :=  \int_{1/\sigma}^\delta ({\psi}_t * \psi_t * f)(x)\ \frac{dt}{t}.
\end{equation}
Then $g_\sigma$ in (\ref{eq:approx_g}) is defined to be a function in $L_2$ such that $\lim_{\delta \to \infty} \| g_\sigma - g_{\sigma,\delta} \|_{L_2} = 0$.
Such $g_\sigma$ exists (as a limit of $g_{\sigma,\delta}$), as shown in Lemma \ref{lemma:g_sigma_fourier_416} below.
Since there is no proof of this result in \cite{NarWar04}, we provide a proof for the sake of completeness.
To this end, we first need the following lemma.

\begin{lemma} \label{lemma:g_sigma_L1L2}
Let $g_{\sigma,\delta}$ be defined as in \eqref{eq:g_sigma_delta} with $\delta > 1/\sigma$.
For all $1 \leq p \leq \infty$, if $f\in L_p$, then $g_{\sigma,\delta} \in L_p$. 
\end{lemma}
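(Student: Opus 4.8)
The plan is to bound the $L_p$ norm of $g_{\sigma,\delta}$ directly, exploiting the fact that $\delta$ is finite so that the integral over $t$ ranges over the compact interval $[1/\sigma,\delta]$ and no improper-integral subtleties arise (those are reserved for the limit $\delta\to\infty$ treated in the next lemma). The two workhorses will be Young's convolution inequality, to control each integrand $\psi_t*\psi_t*f$ uniformly in $t$, and Minkowski's integral inequality, to pass the $L_p$ norm inside the $t$-integral.

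First I would record that $\psi$, being a Schwartz function by Lemma \ref{lemma:existence}, lies in $L_1$, and that the scaling relation noted after the Calder\'on formula gives $\| \psi_t \|_{L_1} = \| \psi \|_{L_1}$ for every $t>0$. Hence $\psi_t*\psi_t \in L_1$ with $\| \psi_t*\psi_t \|_{L_1} \le \| \psi_t \|_{L_1}^2 = \| \psi \|_{L_1}^2$. Writing $\psi_t*\psi_t*f = (\psi_t*\psi_t)*f$ and applying Young's inequality (valid for all $1\le p\le\infty$ when convolving an $L_p$ function with an $L_1$ function) yields the bound
\[
\| \psi_t*\psi_t*f \|_{L_p} \le \| \psi_t*\psi_t \|_{L_1}\,\| f \|_{L_p} \le \| \psi \|_{L_1}^2\,\| f \|_{L_p},
\]
which is crucially independent of $t$.

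The final step is to apply Minkowski's integral inequality to $F(x,t) := (\psi_t*\psi_t*f)(x)$ over the finite interval $[1/\sigma,\delta]$ with the measure $dt/t$, giving
\[
\| g_{\sigma,\delta} \|_{L_p} \le \int_{1/\sigma}^\delta \| \psi_t*\psi_t*f \|_{L_p}\,\frac{dt}{t} \le \| \psi \|_{L_1}^2\,\| f \|_{L_p}\int_{1/\sigma}^\delta \frac{dt}{t} = \| \psi \|_{L_1}^2\,\| f \|_{L_p}\,\ln(\sigma\delta) < \infty,
\]
since $1/\sigma < \delta < \infty$. This establishes $g_{\sigma,\delta}\in L_p$. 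The main technical obstacle I anticipate is not the estimate itself but its justification: I would need to confirm that $(x,t)\mapsto F(x,t)$ is jointly measurable so that Minkowski's integral inequality applies (this follows from the continuity of $t\mapsto \psi_t*\psi_t*f$ in $L_p$ together with Fubini/Tonelli), and to handle the endpoint $p=\infty$, where Young's inequality and the integral-Minkowski inequality both still hold but with the essential-supremum interpretation of the norm. With these routine measurability checks in place, the uniform-in-$t$ bound plus the finiteness of $\ln(\sigma\delta)$ closes the argument.
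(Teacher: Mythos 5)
Your proof is correct and follows essentially the same route as the paper's: Minkowski's integral inequality to move the $L_p$ norm inside the $t$-integral, Young's inequality with $\|\psi_t\|_{L_1} = \|\psi\|_{L_1}$ to get a $t$-uniform bound, and finiteness of $\int_{1/\sigma}^{\delta} dt/t = \log(\sigma\delta)$. Your added remarks on joint measurability and the $p=\infty$ endpoint are sound but go slightly beyond what the paper records.
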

\begin{proof}
For $1 \leq p \leq \infty$, we have
\begin{eqnarray*}
\| g_{\sigma, \delta} \|_{L_p} 
&=&  \left\Vert   \int_{1/\sigma}^\delta {\psi}_t * \psi_t * f \frac{dt}{t} \right\Vert_{L_p} 
\leq  \int_{1/\sigma}^\delta  \left\Vert   {\psi}_t * \psi_t * f  \right\Vert_{L_p}   \frac{dt}{t} \quad (\because {\rm Minkowski\mathchar`'s\ inequality})  \\ 
&\le&   \int_{1/\sigma}^\delta \| \psi_t \|_{L_1}^2 \| f \|_{L_p} \frac{dt}{t}  \quad (\because {\rm Young\mathchar`'s\ inequality})  \\ 
&=&   \int_{1/\sigma}^\delta \| \psi \|_{L_1}^2 \| f \|_{L_p} \frac{dt}{t} = \| \psi \|_{L_1}^2 \| f \|_{L_p} (\log(\delta) - \log(1/\sigma)) < + \infty, \nonumber
\end{eqnarray*}
where in the last line we used the assumption $f \in L_p$ and the fact $\psi \in L_1$, which is a consequence of $\psi$ being a Schwartz function (see Lemma \ref{lemma:existence}). 
\end{proof}

\begin{lemma} \label{lemma:g_sigma_delta_fourier}
Assume $f \in L_1$, and let $g_{\sigma,\delta}$ be defined as in \eqref{eq:g_sigma_delta} with $\delta > 1/\sigma$.
Then the Fourier transform of $g_{\sigma,\delta}$ is given by
\[
\hat{g}_{\sigma,\delta}(\xi) =
\begin{cases} 
\hat{f}(\xi)  \int_{\| \xi \| / \sigma}^{\min(1, \| \xi \| \delta)} (\hat{\psi} (t) )^2  \frac{dt}{t}, \quad {\rm if}\ \| \xi \| < \sigma\\
0,\quad {\rm otherwise}
\end{cases}.
\]
\end{lemma}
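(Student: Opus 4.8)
The plan is to compute $\hat g_{\sigma,\delta}$ by interchanging the Fourier integral (over $x$) with the $t$-integral defining $g_{\sigma,\delta}$, then applying the convolution theorem and finally a change of variables that exploits the support condition ${\rm supp}(\hat\psi)\subset B(0,1)$. First I would record the elementary scaling identity $\widehat{\psi_t}(\xi)=\hat\psi(t\xi)$, which follows from the substitution $y=x/t$ in the definition $\psi_t(x)=t^{-d}\psi(x/t)$ together with the convention $\hat f(\xi)=\int f(x)e^{-i\xi^Tx}dx$. Combined with the convolution theorem for $L_1$ functions (valid since $\psi_t\in L_1$, being Schwartz, and $f\in L_1$ by hypothesis), this gives $\widehat{\psi_t*\psi_t*f}(\xi)=\hat\psi(t\xi)^2\,\hat f(\xi)$ for each fixed $t$. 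Note also that $g_{\sigma,\delta}\in L_1$ by Lemma \ref{lemma:g_sigma_L1L2} with $p=1$, so its Fourier transform is a well-defined (continuous) function.

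The central step is justifying that $\hat g_{\sigma,\delta}(\xi)=\int_{1/\sigma}^\delta \widehat{\psi_t*\psi_t*f}(\xi)\,\frac{dt}{t}$, i.e.\ that the Fourier transform may be moved inside the $t$-integral. I would do this with Fubini's theorem applied to the product measure $dx\otimes\frac{dt}{t}$ on $\R^d\times[1/\sigma,\delta]$. The integrand $(\psi_t*\psi_t*f)(x)e^{-i\xi^Tx}$ has modulus $|(\psi_t*\psi_t*f)(x)|$, and by the estimate already established in the proof of Lemma \ref{lemma:g_sigma_L1L2} (Young's inequality applied with $p=1$) we have $\int_{\R^d}|(\psi_t*\psi_t*f)(x)|\,dx\le \|\psi\|_{L_1}^2\|f\|_{L_1}$ uniformly in $t$. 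Integrating this against $\frac{dt}{t}$ over the finite-length interval $[1/\sigma,\delta]$ yields the finite bound $\|\psi\|_{L_1}^2\|f\|_{L_1}\log(\delta\sigma)$, so the double integral of the absolute value is finite and Fubini permits the interchange.

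It then remains to evaluate $\hat f(\xi)\int_{1/\sigma}^\delta \hat\psi(t\xi)^2\,\frac{dt}{t}$. Writing $\hat\psi(t\xi)=\hat\psi(t\|\xi\|)$ by radiality and substituting $u=t\|\xi\|$ (so that $\frac{dt}{t}=\frac{du}{u}$) turns the inner integral into $\int_{\|\xi\|/\sigma}^{\|\xi\|\delta}\hat\psi(u)^2\,\frac{du}{u}$. The support condition forces $\hat\psi(u)=0$ for $u>1$, so the upper limit collapses to $\min(1,\|\xi\|\delta)$; moreover, when $\|\xi\|\ge\sigma$ the lower limit $\|\xi\|/\sigma\ge 1$ already lies beyond the support, so the whole integral vanishes. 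This produces exactly the two cases in the assertion.

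I expect the only genuine obstacle to be the Fubini justification: one must verify that the iterated integral of absolute values is finite before swapping the order of integration, and this is precisely what the $L_1$ bound (case $p=1$) from Lemma \ref{lemma:g_sigma_L1L2} supplies. The convolution-theorem identity, the change of variables, and the support truncation are then entirely routine.
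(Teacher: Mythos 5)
Your proposal is correct and follows essentially the same route as the paper's proof: interchange the Fourier integral with the $t$-integral via Fubini (justified by the Young's-inequality bound from Lemma \ref{lemma:g_sigma_L1L2}), apply the convolution theorem and the scaling identity $\widehat{\psi_t}(\xi)=\hat{\psi}(t\xi)$, then change variables and truncate using ${\rm supp}(\hat{\psi})\subset B(0,1)$. Your Fubini justification is in fact slightly more explicit than the paper's, but the argument is the same.
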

\begin{proof}
We have
\begin{eqnarray*}
\hat{g}_{\sigma,\delta}(\xi) 
&=& \int \int_{1/\sigma}^\delta ({\psi}_t * \psi_t * f)(x) \frac{dt}{t} e^{- i \xi^T x} dx \\
&=& \int_{1/\sigma}^\delta  \int  ({\psi}_t * \psi_t * f)(x) e^{- i \xi^T x} dx  \frac{dt}{t} \quad (\because {\rm Fubini\mathchar`'s\ theorem}) \\
&=& \hat{f}(\xi)  \int_{1/\sigma}^\delta (\hat{\psi}_t (\xi) )^2  \frac{dt}{t}
= \hat{f}(\xi)  \int_{1/\sigma}^\delta (\hat{\psi} (t\xi) )^2  \frac{dt}{t}.
\end{eqnarray*}
In the above derivation, Fubini's theorem is applicable since $\psi_t * \psi_t * f \in L_1$ (which follows from $\psi \in L_1$, $f \in L_1$ and Young's inequality; see the proof of Lemma \ref{lemma:g_sigma_L1L2}).

Recall that $\hat{\psi}$ is radial, so that the value of $\hat{\psi}(t \xi)$ only depends on the norm of its argument $\| t \xi \| = t \| \xi \|$.
By a change of variables $\tau := t \| \xi \|$, and recalling the notation $\hat{\psi}(t \| \xi \| ) := \hat{\psi}(t\xi)$, it holds that
\begin{eqnarray}
\int_{1/\sigma}^\delta (\hat{\psi} (t \|\xi\| ) )^2  \frac{dt}{t} 
&=&\int_{ \| \xi \| / \sigma }^{ \| \xi \| \delta } ( \hat{\psi}(\tau) )^2 \frac{d\tau}{\tau} \nonumber\\
&=&
\begin{cases} 
\int_{\| \xi \| / \sigma}^{\min(1, \| \xi \| \delta)} (\hat{\psi} (\tau) )^2  \frac{d\tau}{\tau}, \quad {\rm if}\ \| \xi \| < \sigma\\
0,\quad {\rm otherwise}
\end{cases},
\end{eqnarray}
where the last line follows from the property ${\rm supp}(\psi) \subset B(0,1)$.
The proof is completed by combining this and the above expression of $\hat{g}_{\sigma,\delta}(\xi)$.
\end{proof}

We are now ready to show that the improper integral in (\ref{eq:approx_g}) is well-defined as a limit of $g_{\sigma,\delta}$ in $L_2$: The following lemma characterizes this limiting function in $L_2$ in terms of its Fourier transform.

\begin{lemma} \label{lemma:g_sigma_fourier_416}
Assume $f \in L_1 \cap L_2$.
Let $g_{\sigma,\delta}$ be defined as in \eqref{eq:g_sigma_delta} with $\delta > 1/\sigma$, and  $g_{\sigma} \in L_2$ be the inverse Fourier transform of $\hat{g}_\sigma \in L_2$ defined by
\[
\hat{g}_{\sigma}(\xi) =
\begin{cases}
\hat{f}(\xi)  \int_{\| \xi \| / \sigma}^1 (\hat{\psi} (t) )^2  \frac{dt}{t}, \quad {\rm if}\ \| \xi \| < \sigma\\
0, \quad {\rm otherwise}
\end{cases}.
\]
Then we have $\lim_{ \delta \to \infty } \| g_\sigma - g_{\sigma,\delta} \|_{L_2} = 0$.
\end{lemma}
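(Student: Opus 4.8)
The plan is to work entirely on the Fourier side, reducing the $L_2$-convergence of $g_{\sigma,\delta}$ to $g_\sigma$ to a routine application of the dominated convergence theorem.

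First I would check that $g_\sigma$ is well-defined, i.e., that the prescribed $\hat g_\sigma$ indeed lies in $L_2$. The key observation is that the weight $\int_{\|\xi\|/\sigma}^1 (\hat\psi(t))^2\frac{dt}{t}$ is bounded by $1$ uniformly in $\xi$: by the Calder\'on normalization \eqref{eq:Calderon_assump} one has $\int_0^\infty (\hat\psi(t))^2\frac{dt}{t} = 1$, so the truncated integral over $[\|\xi\|/\sigma,1]$ is at most $1$. Hence $|\hat g_\sigma(\xi)| \le |\hat f(\xi)|$ pointwise, and since $f \in L_2$ gives $\hat f \in L_2$ by Plancherel, we conclude $\hat g_\sigma \in L_2$ and $g_\sigma$ is a well-defined element of $L_2$.

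Next, both $g_\sigma$ and $g_{\sigma,\delta}$ lie in $L_2$---the former by construction, the latter by Lemma \ref{lemma:g_sigma_L1L2} with $p=2$---so Plancherel's theorem lets me replace $\|g_\sigma - g_{\sigma,\delta}\|_{L_2}$ by a constant multiple of $\|\hat g_\sigma - \hat g_{\sigma,\delta}\|_{L_2}$. Since $f \in L_1$, Lemma \ref{lemma:g_sigma_delta_fourier} supplies the explicit form of $\hat g_{\sigma,\delta}$, and subtracting yields, for $\|\xi\| < \sigma$,
\[
\hat g_\sigma(\xi) - \hat g_{\sigma,\delta}(\xi) = \hat f(\xi)\int_{\min(1,\|\xi\|\delta)}^1 (\hat\psi(t))^2\frac{dt}{t},
\]
and $0$ otherwise. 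The crucial point is that this difference vanishes whenever $\|\xi\|\delta \ge 1$, because then $\min(1,\|\xi\|\delta)=1$ and the inner integral collapses to $\int_1^1 = 0$; thus the difference is supported on the shrinking set $\{\|\xi\| < 1/\delta\}$.

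Finally I would apply the dominated convergence theorem to
\[
\|\hat g_\sigma - \hat g_{\sigma,\delta}\|_{L_2}^2 = \int_{\{\|\xi\| < \min(\sigma,1/\delta)\}} |\hat f(\xi)|^2\left(\int_{\|\xi\|\delta}^1 (\hat\psi(t))^2\frac{dt}{t}\right)^2 d\xi.
\]
The integrand converges to $0$ pointwise a.e.\ as $\delta\to\infty$ (for each fixed $\xi\neq 0$ it is eventually zero once $\delta > 1/\|\xi\|$), and it is dominated by the integrable function $|\hat f(\xi)|^2$ via the bound $\int_{\|\xi\|\delta}^1(\hat\psi(t))^2\frac{dt}{t}\le 1$ from the first step. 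Hence the limit is $0$, which is the assertion. I expect the only delicate points to be the bookkeeping around the two cut-offs $\sigma$ and $\delta$ (checking that $1/\delta < \sigma$ for large $\delta$, so the domain is genuinely $\{\|\xi\| < 1/\delta\}$) and confirming that the dominating function is uniform in $\delta$; both follow immediately from the Calder\'on normalization, so no genuine obstacle remains.
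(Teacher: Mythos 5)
Your proposal is correct and follows essentially the same route as the paper's proof: pass to the Fourier side via Plancherel, use Lemma \ref{lemma:g_sigma_delta_fourier} for the explicit form of $\hat g_{\sigma,\delta}$, and conclude by dominated convergence with $|\hat f|^2$ as the dominating function. Your additional observations (the uniform bound $\le 1$ from the Calder\'on normalization and the fact that the difference is supported on $\{\|\xi\|<1/\delta\}$) are correct refinements that make the pointwise convergence and domination fully explicit, but the argument is the same.
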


\begin{proof}
First note that by Lemma \ref{lemma:g_sigma_L1L2}, the assumption $f \in L_1 \cap L_2$ implies $g_{\sigma,\delta} \in L_1 \cap L_2$, so we have $\hat{g}_{\sigma,\delta} \in L_1 \cap L_2$.
Below we will show $\lim_{ \delta \to \infty } \| \hat{g}_\sigma - \hat{g}_{\sigma,\delta} \|_{L_2} = 0$, from which the assertion follows because of the Fourier transform being an isometry from $L_2$ to $L_2$.
By Lemma \ref{lemma:g_sigma_delta_fourier} (which is applicable as $f \in L_1$) we have
\begin{eqnarray*}
\| \hat{g}_\sigma - \hat{g}_{\sigma,\delta} \|_{L_2}^2 
&=&  \int_{ \| \xi \| < \sigma } |\hat{f}(\xi) |^2 \left|  \int_{ \min(1, \| \xi \| \delta) }^{1} (\hat{\psi} (t) )^2  \frac{dt}{t}  \right|^2 d\xi.
\end{eqnarray*}
Therefore,
\begin{eqnarray}
\lim_{\delta \to \infty}  \| \hat{g}_\sigma - \hat{g}_{\sigma,\delta} \|_{L_2}^2
&=&  \int_{ \| \xi \| < \sigma } |\hat{f}(\xi) |^2  \lim_{\delta \to \infty}   \left|  \int_{ \min(1, \| \xi \| \delta) }^{1} (\hat{\psi} (t) )^2  \frac{dt}{t}  \right|^2 d\xi \label{eq:dominated_490} \\
&=&  \int_{ \| \xi \| < \sigma } |\hat{f}(\xi) |^2     \left|  \int_{1}^{1} (\hat{\psi} (t) )^2  \frac{dt}{t}  \right|^2 d\xi=0, \nonumber
\end{eqnarray}
where (\ref{eq:dominated_490}) follows from the dominated convergence theorem (which follows from $f \in L_2$).
\end{proof}

\subsection{The Sobolev norm of the approximate function}

In the main body of the paper, we use the following lemma, which is not provided in \cite{NarWar04}.

\begin{lemma} \label{eq:gsigma_norm}
Let $r,s\in\mathbb{R}$, $r, s > 0$ such that  $r \geq s$ and let $\sigma > 0$ be a constant. 
If $f \in H^s(\R^d) \cap L_1(\R^d)$, the function $g_\sigma$ defined in \eqref{eq:approx_g} satisfies
\begin{equation}
\| g_\sigma \|_{H^r} \leq (1 + \sigma^2)^{\frac{r-s}{2}} \| f \|_{H^s},\nonumber
\end{equation}
where $C > 0$ is a constant independent of $f$ and $\sigma$.
\end{lemma}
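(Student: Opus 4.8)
The plan is to work entirely on the Fourier side, exploiting the explicit formula for $\hat{g}_\sigma$ provided by Lemma~\ref{lemma:g_sigma_fourier_416}. First I would check that the hypotheses of that lemma are met: since $s > 0$ we have $(1+\|\xi\|^2)^s \geq 1$, so $\|f\|_{L_2}^2 = \int |\hat f(\xi)|^2\, d\xi \leq \|f\|_{H^s}^2 < \infty$, whence $f \in L_1 \cap L_2$. Lemma~\ref{lemma:g_sigma_fourier_416} then gives
\[
\hat g_\sigma(\xi) = \hat f(\xi)\, m(\xi)\, \mathbf{1}\{\|\xi\| < \sigma\}, \qquad m(\xi) := \int_{\|\xi\|/\sigma}^1 (\hat\psi(t))^2\,\frac{dt}{t}.
\]

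Second, I would control the multiplier $m(\xi)$. The integrand $(\hat\psi(t))^2$ is nonnegative, and the Calder\'on normalization \eqref{eq:Calderon_assump} (read in the radial notation of \eqref{eq:notation_psi_xi} as $\int_0^\infty (\hat\psi(t))^2\,\frac{dt}{t} = 1$) implies that for every $\xi$ with $\|\xi\| < \sigma$ one has $0 \leq m(\xi) \leq \int_0^\infty (\hat\psi(t))^2\,\frac{dt}{t} = 1$. Thus $|\hat g_\sigma(\xi)| \leq |\hat f(\xi)|$ pointwise and $\hat g_\sigma$ is supported in the ball $\{\|\xi\| < \sigma\}$; these are the only two facts about $g_\sigma$ needed.

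The final step is a direct norm computation. Writing the $H^r$ norm via \eqref{eq:Sobolev_fourier} and splitting the weight as $(1+\|\xi\|^2)^r = (1+\|\xi\|^2)^{r-s}(1+\|\xi\|^2)^s$, I would bound
\begin{align*}
\| g_\sigma \|_{H^r}^2
&= \int_{\|\xi\|<\sigma} |\hat f(\xi)|^2\, m(\xi)^2\, (1+\|\xi\|^2)^r\, d\xi \\
&\leq \int_{\|\xi\|<\sigma} |\hat f(\xi)|^2\, (1+\|\xi\|^2)^{r-s}\,(1+\|\xi\|^2)^s\, d\xi.
\end{align*}
On the support $\|\xi\| < \sigma$ we have $1 + \|\xi\|^2 < 1 + \sigma^2$, and since $r - s \geq 0$ the monotone factor obeys $(1+\|\xi\|^2)^{r-s} \leq (1+\sigma^2)^{r-s}$. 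Pulling this constant out and enlarging the domain of integration to all of $\R^d$ leaves exactly $(1+\sigma^2)^{r-s}\|f\|_{H^s}^2$; taking square roots yields the claim.

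I do not anticipate a serious obstacle, since the argument reduces to a one-line Fourier multiplier estimate once Lemma~\ref{lemma:g_sigma_fourier_416} is in hand. The only points requiring care are (i) verifying the integrability hypotheses so the Fourier representation is legitimate, and (ii) correctly invoking the Calder\'on normalization to obtain $m(\xi) \leq 1$ --- here the finite upper limit $1$ in the definition of $m(\xi)$ (rather than $\infty$) only helps, so the inclusion $\int_{\|\xi\|/\sigma}^1 \leq \int_0^\infty$ is immediate from nonnegativity.
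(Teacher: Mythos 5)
Your proposal is correct and follows essentially the same route as the paper's proof: invoke Lemma \ref{lemma:g_sigma_fourier_416} for the Fourier representation of $g_\sigma$, bound the multiplier by $1$ via the Calder\'on normalization \eqref{eq:Calderon_assump}, and split the weight $(1+\|\xi\|^2)^r=(1+\|\xi\|^2)^{r-s}(1+\|\xi\|^2)^s$ with $(1+\|\xi\|^2)^{r-s}\leq(1+\sigma^2)^{r-s}$ on $B(0,\sigma)$. Your explicit check that $f\in H^s\cap L_1$ implies $f\in L_1\cap L_2$ (so that Lemma \ref{lemma:g_sigma_fourier_416} applies) is a small point the paper leaves implicit, and is a welcome addition.
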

\begin{proof}

Note that from (\ref{eq:Calderon_assump}), if $\| \xi \| < \sigma$, we have 
$
\int_{\| \xi \| /\sigma}^1 | \hat{\psi}(t) |^2 \frac{dt}{t} \leq \int_{0}^1 | \hat{\psi}(t) |^2 \frac{dt}{t} \leq 1.
$
Therefore by Lemma \ref{lemma:g_sigma_fourier_416} we have 
\begin{eqnarray*}
\| g_\sigma \|_{H^r}^2 
&=&   \int_{B(0,\sigma)} (1 + \|\xi \|^2)^r | \hat{g_\sigma}(\xi) |^2 d\xi \\
&\leq&   \int_{B(0,\sigma)} (1 + \|\xi \|^2)^r | \hat{f}(\xi) |^2 d\xi \\
&\leq&   (1 + \sigma^2)^{r-s} \int_{B(0,\sigma)}  (1 + \|\xi \|^2)^s | \hat{f}(\xi) |^2 d\xi \\
&\leq&   (1 + \sigma^2)^{r-s}  \| f \|_{H^s}^2,
\end{eqnarray*}
yielding the result.
\end{proof}

\bibliographystyle{spmpsci}      

\bibliography{main.bbl}

\begin{thebibliography}{10}
\providecommand{\url}[1]{{#1}}
\providecommand{\urlprefix}{URL }
\expandafter\ifx\csname urlstyle\endcsname\relax
  \providecommand{\doi}[1]{DOI~\discretionary{}{}{}#1}\else
  \providecommand{\doi}{DOI~\discretionary{}{}{}\begingroup
  \urlstyle{rm}\Url}\fi

\bibitem{AdaFou03}
Adams, R.A., Fournier, J.J.F.: Sobolev Spaces, 2nd edn.
\newblock Academic Press, New York (2003)

\bibitem{Aronszajn1950}
Aronszajn, N.: Theory of reproducing kernels.
\newblock Transactions of the American Mathematical Society, 68(3) pp. 337--404
  (1950)

\bibitem{AvrSinYanMah16}
Avron, H., Sindhwani, V., Yang, J., Mahoney, M.W.: Quasi-{M}onte {C}arlo
  feature maps for shift-invariant kernels.
\newblock Journal of Machine Learning Research \textbf{17}(120), 1--38 (2016)

\bibitem{Bac17}
Bach, F.: On the equivalence between kernel quadrature rules and random feature
  expansions.
\newblock Journal of Machine Learning Research \textbf{18}(19), 1--38 (2017)

\bibitem{BacJulObo12}
Bach, F., Lacoste-Julien, S., Obozinski, G.: On the equivalence between herding
  and conditional gradient algorithms.
\newblock In: J.~Langford, J.~Pineau (eds.) Proceedings of the 29th
  International Conference on Machine Learning (ICML2012), pp. 1359--1366.
  Omnipress (2012)

\bibitem{BreSco08}
Brenner, S.C., Scott, L.R.: The Mathematical Theory of Finite Element Methods,
  3rd edn.
\newblock Springer (2008)

\bibitem{pmlr-v70-briol17a}
Briol, F.X., Oates, C.J., Cockayne, J., Chen, W.Y., Girolami, M.: On the
  sampling problem for kernel quadrature.
\newblock In: D.~Precup, Y.W. Teh (eds.) Proceedings of the 34th International
  Conference on Machine Learning, \emph{Proceedings of Machine Learning
  Research}, vol.~70, pp. 586--595. PMLR (2017)

\bibitem{BriOatGirOsb15}
Briol, F.X., Oates, C.J., Girolami, M., Osborne, M.A.: {Frank-Wolfe Bayesian}
  quadrature: Probabilistic integration with theoretical guarantees.
\newblock In: C.~Cortes, N.D. Lawrence, D.D. Lee, M.~Sugiyama, R.~Garnett
  (eds.) Advances in Neural Information Processing Systems 28, pp. 1162--1170.
  Curran Associates, Inc. (2015)

\bibitem{BriOatGirOsbSej15}
Briol, F.X., Oates, C.J., Girolami, M., Osborne, M.A., Sejdinovic, D.:
  Probabilistic integration: A role in statistical computation?
\newblock Statistical Science  (2018).
\newblock To appear

\bibitem{Chen_ICML2018}
Chen, W.Y., Mackey, L., Gorham, J., Briol, F.X., Oates, C.: Stein points.
\newblock In: J.~Dy, A.~Krause (eds.) Proceedings of the 35th International
  Conference on Machine Learning, \emph{Proceedings of Machine Learning
  Research}, vol.~80, pp. 844--853. PMLR (2018)

\bibitem{CheWelSmo10}
Chen, Y., Welling, M., Smola, A.: Supersamples from kernel-herding.
\newblock In: P.~Gr\"{u}nwald, P.~Spirtes (eds.) Proceedings of the 26th
  Conference on Uncertainty in Artificial Intelligence (UAI 2010), pp.
  109--116. AUAI Press (2010)

\bibitem{CucZho07}
Cucker, F., Zhou, D.X.: Learning Theory: An approximation theory view point.
\newblock Cambridge University Press (2007)

\bibitem{diaconis1988bayesian}
Diaconis, P.: Bayesian numerical analysis.
\newblock Statistical decision theory and related topics IV \textbf{1},
  163--175 (1988)

\bibitem{Dic07}
Dick, J.: Explicit constructions of quasi-{M}onte {C}arlo rules for the
  numerical integration of high-dimensional periodic functions.
\newblock SIAM Journal on Numerical Analysis \textbf{45}, 2141--2176 (2007)

\bibitem{Dic08}
Dick, J.: Walsh spaces containing smooth functions and quasi--{M}onte {C}arlo
  rules of arbitrary high order.
\newblock SIAM Journal on Numerical Analysis \textbf{46}(3), 1519--1553 (2008)

\bibitem{Dic11}
Dick, J.: Higher order scrambled digital nets achieve the optimal rate of the
  root mean square error for smooth integrands.
\newblock The Annals of Statistics \textbf{39}(3), 1372--1398 (2011)

\bibitem{DicKuoSlo13}
Dick, J., Kuo, F.Y., Sloan, I.H.: High dimensional numerical integration - the
  {Q}uasi-{M}onte {C}arlo way.
\newblock Acta Numerica \textbf{22}(133-288) (2013)

\bibitem{DicNuyPil14}
Dick, J., Nuyens, D., Pillichshammer, F.: Lattice rules for nonperiodic smooth
  integrands.
\newblock Numerische Mathematik \textbf{126}(2), 259--291 (2014)

\bibitem{FraJawWei91}
Frazier, M., Jawerth, B., Weiss, G.L.: Littlewood-Paley Theory and the Study of
  Function Spaces.
\newblock Amer Mathematical Society (1991)

\bibitem{FusHanNarWarWri14}
Fuselier, E., Hangelbroek, T., Narcowich, F.J., Ward, J.D., Wright, G.B.:
  Kernel based quadrature on spheres and other homogeneous spaces.
\newblock Numerische Mathematik \textbf{127}(1), 57--92 (2014)

\bibitem{GerCho15}
Gerber, M., Chopin, N.: Sequential quasi {M}onte {C}arlo.
\newblock Journal of the Royal Statistical Society. Series B. Statistical
  Methodology \textbf{77}(3), 509--579 (2015)

\bibitem{GhaZou03}
Ghahramani, Z., Rasmussen, C.E.: Bayesian monte carlo.
\newblock In: S.~Becker, S.~Thrun, K.~Obermayer (eds.) Advances in Neural
  Information Processing Systems 15, pp. 505--512. MIT Press (2003)

\bibitem{GodDic15}
Goda, T., Dick, J.: Construction of interlaced scrambled polynomial lattice
  rules of arbitrary high order.
\newblock Foundations of Computational Mathematics \textbf{15}(5), 1245--1278
  (2015)

\bibitem{GreBorRasSchetal12}
Gretton, A., Borgwardt, K., Rasch, M., Sch\"{o}lkopf, B., Smola, A.: A kernel
  two-sample test.
\newblock Jounal of Machine Learning Research \textbf{13}, 723--773 (2012)

\bibitem{gunter_sampling_2014}
Gunter, T., Osborne, M.A., Garnett, R., Hennig, P., Roberts, S.J.: Sampling for
  inference in probabilistic models with fast {B}ayesian quadrature.
\newblock In: Z.~Ghahramani, M.~Welling, C.~Cortes, N.D. Lawrence, K.Q.
  Weinberger (eds.) Advances in Neural Information Processing Systems 27, pp.
  2789--2797. Curran Associates, Inc. (2014)

\bibitem{Hic98}
Hickernell, F.J.: A generalized discrepancy and quadrature error bound.
\newblock Mathematics of Computation \textbf{67}(221), 299--322 (1998)

\bibitem{HusDuv12}
Husz\'{a}r, F., Duvenaud, D.: Optimally-weighted herding is {B}ayesian
  quadrature.
\newblock In: N.~de~Freitas, K.~Murphy (eds.) Proceedings of the 28th
  Conference on Uncertainty in Artificial Intelligence (UAI2012), pp. 377--385.
  AUAI Press (2012)

\bibitem{KanNisGreFuk16}
Kanagawa, M., Nishiyama, Y., Gretton, A., Fukumizu, K.: Filtering with
  state-observation examples via kernel monte carlo filter.
\newblock Neural Computation \textbf{28}(2), 382--444 (2016)

\bibitem{KanSriFuk16}
Kanagawa, M., Sriperumbudur, B.K., Fukumizu, K.: Convergence guarantees for
  kernel-based quadrature rules in misspecified settings.
\newblock In: D.D. Lee, M.~Sugiyama, U.V. Luxburg, I.~Guyon, R.~Garnett (eds.)
  Advances in Neural Information Processing Systems 29, pp. 3288--3296. Curran
  Associates, Inc. (2016)

\bibitem{Karvonen_NIPS2018}
Karvonen, T., Oates, C.J., S\"arkk\"a, S.: A {B}ayes-{S}ard cubature method.
\newblock In: Advances in Neural Information Processing Systems 31. Curran
  Associates, Inc. (2018).
\newblock To appear

\bibitem{KerHen16}
Kersting, H., Hennig, P.: Active uncertainty calibration in {Bayesian} {ODE}
  solvers.
\newblock In: Proceedings of the 32nd Conference on Uncertainty in Artificial
  Intelligence (UAI 2016), pp. 309--318. {AUAI} Press (2016)

\bibitem{LacLinBac15}
Lacoste-Julien, S., Lindsten, F., Bach, F.: Sequential kernel herding:
  {Frank-Wolfe} optimization for particle filtering.
\newblock In: G.~Lebanon, S.V.N. Vishwanathan (eds.) Proceedings of the 18th
  International Conference on Artificial Intelligence and Statistics,
  \emph{Proceedings of Machine Learning Research}, vol.~38, pp. 544--552. PMLR
  (2015)

\bibitem{Mat60}
Mat\`ern, B.: Spatial variation.
\newblock Meddelanden fran Statens Skogsforskningsinstitut \textbf{49}(5)
  (1960)

\bibitem{Mat86}
Mat\`ern, B.: Spatial Variation, 2nd edn.
\newblock Springer-Verlag (1986)

\bibitem{minka2000deriving}
Minka, T.: {Deriving quadrature rules from {G}aussian processes}.
\newblock Tech. rep., Statistics Department, Carnegie Mellon University (2000)

\bibitem{MuaFukSriSch17}
Muandet, K., Fukumizu, K., Sriperumbudur, B.K., Sch\"{o}lkopf, B.: Kernel mean
  embedding of distributions : A review and beyond.
\newblock Foundations and Trends in Machine Learning \textbf{10}(1--2), 1--141
  (2017)

\bibitem{NarWar04}
Narcowich, F.J., Ward, J.D.: Scattered-data interpolation on $\mathbb{R}^n$:
  Error estimates for radial basis and band-limited functions.
\newblock SIAM Journal on Mathematical Analysis \textbf{36}, 284--300 (2004)

\bibitem{NarWarWen05}
Narcowich, F.J., Ward, J.D., Wendland, H.: {Sobolev} bounds on functions with
  scattered zeros, with applications to radial basis function surface fitting.
\newblock Mathematics of Computation \textbf{74}(250), 743--763 (2005)

\bibitem{NarWarWen06}
Narcowich, F.J., Ward, J.D., Wendland, H.: {Sobolev} error estimates and a
  {Bernstein} inequality for scattered data interpolation via radial basis
  functions.
\newblock Constructive Approximation \textbf{24}(2), 175--186 (2006)

\bibitem{Nov88}
Novak, E.: Deterministic and Stochastic Error Bounds in Numerical Analysis.
\newblock Springer-Verlag (1988)

\bibitem{Nov16}
Novak, E.: Some results on the complexity of numerical integration.
\newblock In: R.~Cools, D.~Nuyens (eds.) Monte Carlo and Quasi-Monte Carlo
  Methods. Springer Proceedings in Mathematics \& Statistics, vol. 163, pp.
  161--183. Springer, Cham (2016)

\bibitem{NovWoz10}
Novak, E., W\'{o}zniakowski, H.: Tractability of Multivariate Problems, Vol.
  II: Standard Information for Functionals.
\newblock EMS (2010)

\bibitem{Oates_NIPS2017}
Oates, C., Niederer, S., Lee, A., Briol, F.X., Girolami, M.: Probabilistic
  models for integration error in the assessment of functional cardiac models.
\newblock In: I.~Guyon, U.V. Luxburg, S.~Bengio, H.~Wallach, R.~Fergus,
  S.~Vishwanathan, R.~Garnett (eds.) Advances in Neural Information Processing
  Systems 30, pp. 110--118. Curran Associates, Inc. (2017)

\bibitem{OatCocBriGir16}
Oates, C.J., Cockayne, J., Briol, F.X., Girolami, M.: Convergence rates for a
  class of estimators based on {S}tein's method.
\newblock Bernoulli  (2018).
\newblock To appear

\bibitem{OatGir16}
Oates, C.J., Girolami, M.: Control functionals for quasi-{M}onte {C}arlo
  integration.
\newblock In: A.~Gretton, C.C. Robert (eds.) Proceedings of the 19th
  International Conference on Artificial Intelligence and Statistics,
  \emph{Proceedings of Machine Learning Research}, vol.~51, pp. 56--65. PMLR
  (2016)

\bibitem{OatGirCho17}
Oates, C.J., Girolami, M., Chopin, N.: Control functionals for {M}onte {C}arlo
  integration.
\newblock Journal of the Royal Statistical Society, Series B \textbf{79}(2),
  323--380 (2017)

\bibitem{OatPapGir16}
Oates, C.J., Papamarkou, T., Girolami, M.: The controlled thermodynamic
  integral for {B}ayesian model evidence evaluation.
\newblock Journal of the American Statistical Association \textbf{111}(514),
  634--645 (2016)

\bibitem{Oha91}
O'Hagan, A.: {B}ayes--{H}ermite quadrature.
\newblock Journal of Statistical Planning and Inference \textbf{29}, 245--260
  (1991)

\bibitem{osborne_active_2012}
Osborne, M.A., Duvenaud, D.K., Garnett, R., Rasmussen, C.E., Roberts, S.J.,
  Ghahramani, Z.: Active learning of model evidence using {Bayesian}
  quadrature.
\newblock In: F.~Pereira, C.J.C. Burges, L.~Bottou, K.Q. Weinberger (eds.)
  Advances in Neural Information Processing Systems 25, pp. 46--54. Curran
  Associates, Inc. (2012)

\bibitem{paul_alternating_2018}
Paul, S., Chatzilygeroudis, K., Ciosek, K., Mouret, J.B., Osborne, M.A.,
  Whiteson, S.: Alternating optimisation and quadrature for robust control.
\newblock In: The Thirty-Second AAAI Conference on Artificial Intelligence
  (AAAI-18), pp. 3925--3933 (2018)

\bibitem{SarHarSveSan16}
{S{\"a}rkk{\"a}}, S., {Hartikainen}, J., {Svensson}, L., {Sandblom}, F.: {On
  the relation between Gaussian process quadratures and sigma-point methods}.
\newblock Journal of Advances in Information Fusion \textbf{11}(1), 31--46
  (2016)

\bibitem{Sch95}
Schaback, R.: Error estimates and condition numbers for radial basis function
  interpolation.
\newblock Advances in Computational Mathematics \textbf{3}(3), 251--264 (1995)

\bibitem{SchWen06}
Schaback, R., Wendland, H.: Kernel techniques: From machine learning to
  meshless methods.
\newblock Acta Numerica \textbf{15}, 543--639 (2006)

\bibitem{SloWoz98}
Sloan, I.H., W\'{o}zniakowski, H.: When are quasi-{M}onte {C}arlo algorithms
  efficient for high dimensional integrals?
\newblock Journal of Complexity \textbf{14}(1), 1--33 (1998)

\bibitem{SomVia06}
Sommariva, A., Vianello, M.: Numerical cubature on scattered data by radial
  basis functions.
\newblock Computing \textbf{76}, 295--310 (2006)

\bibitem{SriGreFukSchetal10}
Sriperumbudur, B.K., Gretton, A., Fukumizu, K., Sch{\"o}lkopf, B., Lanckriet,
  G.R.: Hilbert space embeddings and metrics on probability measures.
\newblock Jounal of Machine Learning Research \textbf{11}, 1517--1561 (2010)

\bibitem{Ste70}
Stein, E.M.: Singular Integrals and Differentiability Properties of Functions.
\newblock Princeton University Press, Princeton, NJ (1970)

\bibitem{SteChr2008}
Steinwart, I., Christmann, A.: Support Vector Machines.
\newblock Springer (2008)

\bibitem{Tri06}
Triebel, H.: Theory of Function Spaces {III}.
\newblock Birkh\"{a}user Verlag (2006)

\bibitem{Wen95}
Wendland, H.: Piecewise polynomial, positive definite and compactly supported
  radial functions of minimal degree.
\newblock Advances in Computational Mathematics \textbf{4}(1), 389--396 (1995)

\bibitem{Wen05}
Wendland, H.: Scattered Data Approximation.
\newblock Cambridge University Press, Cambridge, UK (2005)

\bibitem{Xi_ICML2018}
Xi, X., Briol, F.X., Girolami, M.: {B}ayesian quadrature for multiple related
  integrals.
\newblock In: J.~Dy, A.~Krause (eds.) Proceedings of the 35th International
  Conference on Machine Learning, \emph{Proceedings of Machine Learning
  Research}, vol.~80, pp. 5373--5382. PMLR (2018)

\end{thebibliography}

%
%

\end{document}